\newtheorem{theorem}{Theorem}[section]
\newtheorem{proposition}[theorem]{Proposition}
\newtheorem{lemma}[theorem]{Lemma}
\newtheorem{corollary}[theorem]{Corollary}
\theoremstyle{definition}
\newtheorem{definition}[theorem]{Definition}
\newtheorem{example}[theorem]{Example}
\theoremstyle{remark}
\newtheorem*{remark}{Remark}
\numberwithin{equation}{section}
\newcommand{\RE}{\mbox{$\mathbb{R}$}}
\newcommand{\C}{\mbox{$\mathbb{C}$}}
\newcommand{\E}{\mbox{$\mathcal{E}$}}
\newcommand{\Ep}{\mbox{$\mathcal{E}_{p}$}}
\newcommand{\EE}{\operatorname{E}_{w}}
\newcommand{\PP}{\operatorname{P}}
\newcommand{\be}{\beta^{n-m}}
\newcommand{\dd}{{\bf d}}
\newcommand{\HH}{\operatorname{H}_{m}}
\newcommand{\II}{\operatorname{I}}
\begin{document}

\author{Per \AA hag}
\address{Department of Mathematics and Mathematical Statistics\\ Ume\aa \ University\\ SE-901 87 Ume\aa \\ Sweden}
\email{Per.Ahag@math.umu.se}
\author{Rafa\l\ Czy{\.z}}
\address{Institute of Mathematics \\ Faculty of Mathematics and Computer Science \\ Jagiellonian University\\ \L ojasiewicza 6\\ 30-348 Krak\'ow\\ Poland}
\email{Rafal.Czyz@im.uj.edu.pl}\thanks{The second-named author was supported by the Priority Research Area SciMat under the program Excellence Initiative - Research University at the Jagiellonian University in Krak\'ow.}

\dedicatory{We raise our cups to Urban Cegrell, gone but not forgotten, gone but ever here. \\ Until we meet again in Valhalla!}

\keywords{Aubin-Mabuchi energy functional, Caffarelli-Nirenberg-Spruck model, Cegrell class, complete metric space, geodesic, $m$-subharmonic function, rooftop envelope}
\subjclass[2020]{Primary 32U05, 31C45, 54E50; Secondary 53C22}

\title[Geodesics in the space of $m$-subharmonic functions]{Geodesics in the space of $m$-subharmonic functions with bounded energy}

\begin{abstract} With inspiration from the K\"ahler geometry, we introduce a metric structure on the energy class, $\mathcal{E}_{1,m}$, of $m$-subharmonic functions with bounded energy and show that it is complete. After studying how the metric convergence relates to the accepted convergences in this Caffarelli-Nirenberg-Spruck model, we end by constructing geodesics in a subspace of our complete metric space.
\end{abstract}

\maketitle

\begin{center}\bf
\today
\end{center}

\section{Introduction}

In a series of articles~\cite{Hilbert1}--\cite{Hilbert5}, from the beginning of the twentieth century, Hilbert laid the foundation for function spaces when developing a systematic theory to solve integral equations. On the other hand, in 1906, Fr\'{e}chet~\cite{Frechet} introduced abstract metric spaces. Then began a feverish activity among the mathematical community, which resulted in the significant work of Banach~\cite{Banach} from 1920 about complete normed spaces. This article will study the function space of $m$-subharmonic functions with bounded energy, $\mathcal E_{1,m}$, mainly from a metric space viewpoint. However, our starting point and motivation come from the normed vector space standpoint.

For $1\leq m \leq n$, $n>1$, we will always assume that $\Omega$ is a $m$-hyperconvex domain to ensure the existence of enough $m$-subharmonic
functions, except Section~\ref{sec_geodesic} where we shall assume that $\Omega$ is a $(m+1)$-hyperconvex. Recall that the choice of $m$ makes it possible to interpolate between subharmonic and plurisubharmonic functions in the sense that $1$-subharmonic
functions are only classical subharmonic functions, while $n$-subharmonic functions are plurisubharmonic functions. Therefore, the energy classes $\mathcal E_{1,m}$
enjoys the following inclusions:
\[
\mathcal E_{1,n}\subset\mathcal E_{1,n-1}\subset\cdots\subset \mathcal E_{1,1}\, ,
\]
The idea of these interpolation spaces goes back to Caffarelli et al.~\cite{CNS85}. Pluripotential methods were introduced to this model by B\l ocki~\cite{B}, and the
energy classes, $\mathcal E_{1,m}$, by Lu~\cite{L}. These energy classes were modelled from Cegrell's~\cite{cegrell_pc} energy classes in the pluricomplex case. For a historical account and references see e.g.~\cite{AC2019,LN2}.

To study the space $\mathcal E_{1,m}$ from the normed vector space perspective we define $\delta\mathcal E_{1,m}=\mathcal E_{1,m}-\mathcal E_{1,m}$, since $\mathcal E_{1,m}$ is only a convex cone. Then for any $u\in \delta\mathcal E_{1,m}$ define
\[
\|u\|=\inf_{u_1-u_2=u \atop u_1,u_2\in \mathcal{E}_{1,m}}\left(\int_{\Omega} (-(u_1+u_2))\operatorname{H}_m(u_1+u_2)
\right)^{\frac {1}{m+1}}\, ,
\]
where $\operatorname{H}_m$ is the complex operator Hessian operator.  It was proved in~\cite{thien} that $(\delta\mathcal{E}_{1,m}, \|\cdot\|)$ is a Banach space (for the case $m=n$ see~\cite{mod}). However, some unwanted properties occur. For example, take the case $m=n$, and for $a,b<0$ define the following functions in the unit ball in $\mathbb{C}^n$:
\[
u_a(z)=\max (\ln |z|,a) \quad \text{ and }\quad u_b(z)=\max (\ln |z|,b)\, ,
\]
then if $|a-b|<r$, then $|u_a(z)-u_b(z)|<r$, while
\[
\|u_a-u_b\|=(2\pi)^n\left(b-a+2^{n+1}(-b)\right).
\]
To avoid the mentioned complications, we will let ourselves be inspired by~\cite{T1,T2,T3} and introduce the following metric structure. We define $\dd:\mathcal E_{1,m}\times\mathcal E_{1,m}\to \RE$ by
\[
\dd (u,v)=\EE(u)+\EE(v)-2\EE(\PP(u,v)),
\]
where $\PP(u,v)=\sup\{\varphi\in \mathcal E_{1,m}: \varphi\leq \min(u,v)\}^*$ is the rooftop envelope,  and $\EE$ is the Aubin-Mabuchi energy functional. This type of metric topology has extensively been used and studied in K\"ahler geometry, to mention a few works~\cite{BermanDarvasLu, ChenCheng1, ChenCheng2, ChenCheng3, D1,D2,DDL,DR}. After proving some elementary properties of $\dd$, we prove in Theorem~\ref{thm_compmetric} that $(\mathcal E_{1,m},\dd)$ is a complete metric space. As we shall see in Section~\ref{sec_comparison}, the topology generated by the norm $\|\cdot\|$ is
not comparable with the metric topology of $(\mathcal E_{1,m},\dd)$.

Then in Section~\ref{sec_conv}, we proceed with studying the convergence in
$(\mathcal E_{1,m},\dd)$. By using ideas from~\cite{BermanBoucksumGuedjZeriahi, BoucksomEyssidieuxGuedjZeriahi}, where they were used in pluripotential theory, we prove:

\medskip

\begin{enumerate}\itemsep2mm

\item Let $u_j, v_j\in \mathcal E_{1,m}$ be decreasing or increasing sequences converging to $u,v \in\mathcal E_{1,m}$, respectively. Then $\dd(u_j,v_j)\to \dd(u,v)$, as $j\to \infty$. (Proposition~\ref{monotone}).

\item Let $u_j,u\in \mathcal E_{1,m}$. If $\lim_{j\to \infty}\dd(u_j,u)=0$,  then $u_j\to u$ in $L^1(\Omega)$, and $\HH(u_j)\to \HH(u)$ weakly, as  $j\to \infty$. (Corollary~\ref{cor}).

\item Let $u_j,u\in \mathcal E_{1,m}$. Then $\dd(u_j,u)\to 0$ as $j\to \infty$ if, and only if, $u_j\to u$ in $L^1(\Omega)$ and $\EE(u_j)\to \EE(u)$, as $j\to \infty$ (Theorem~\ref{thm_conv}).

\item If $d(u_j,u)\to 0$, then $u_j\to u$ in capacity $\operatorname {cap}_m$ (Proposition~\ref{cap}).

\end{enumerate}

\medskip

Ever since Menger~\cite{Menger} introduced geodesics into abstract metric spaces, it has been a fundamental part of metric geometry. For us, from the K\"ahler geometric viewpoint,  the influential work of Berndtsson~\cite{ber},  Darvas~\cite{D1,D2}, and  Mabuchi~\cite{Mabuchi} are of paramount importance. In Section~\ref{sec_geodesic}, we shall study geodesics in $(\mathcal E_{1,m},\dd)$.   Although it is not in historical chronological order, we have here been inspired by~\cite{DDL,rashkovskii,T4}. The case $m = n$ was shown by Rashovskii in~\cite{rashkovskii}.  For $1\leq m < n$ a problem arise, since if $\mathbb C^n\ni z\to u(z)$ is $m$-subharmonic, then $\mathbb C^{n}\times \mathbb C\ni(z,\lambda)\to u(z)$ need not to be $(m+1)$-subharmonic, it is only $m$-subharmonic. For this reason, we can not directly use Semmes' method~\cite{Semmes92} to construct geodesics. This leads us to consider the subspace, $\widehat{\E}_{1,m}$, of $(\mathcal E_{1,m},\dd)$ containing those functions that are also $(m+1)$-subharmonic. For further information on Semmes' method, we refer to the inspiring monograph written by Guedj, and Zeriahi~\cite[Chapter~15.2]{GZbook}.

Again, it is not our intention to historically describe the history and significance of the use of metric geometry within K\"ahler geometry but to mention a few more articles~\cite{BB1,DDL2,DDL3,DDL4,McCleerey1, McCleerey2, RossNystrom}, and then we refer to~\cite{D3,GZbook,rashkovskii2021} for a better historical account and an overall picture. It is of interest here to mention the work of He and Li~\cite{HeLi} on geometric pluripotential theory on Sasakian manifolds.

\section{Preliminaries}\label{sec_prelim}

 Let $\Omega \subset \C^n$, $n>1$, be a bounded domain, $1\leq m\leq n$, and define $\mathbb C_{(1,1)}$ to be the set of $(1,1)$-forms with constant coefficients. Then set
\[
\Gamma_m=\left\{\alpha\in \mathbb C_{(1,1)}: \alpha\wedge \beta^{n-1}\geq 0, \dots , \alpha^m\wedge \beta ^{n-m}\geq 0   \right\}\, ,
\]
where $\beta=dd^c|z|^2$ is the canonical K\"{a}hler form in $\C^n$.

\begin{definition}\label{m-sh}  Assume that $\Omega \subset \C^n$, $n>1$, is a bounded domain, $1\leq m\leq n$, and let $u$ be a subharmonic function defined  on $\Omega$.
Then we say that $u$ is \emph{$m$-subharmonic}, if the following inequality holds
\[
dd^cu\wedge\alpha_1\wedge\dots\wedge\alpha_{m-1}\wedge\beta^{n-m}\geq 0\, ,
\]
in the sense of currents for all $\alpha_1,\ldots,\alpha_{m-1}\in \Gamma_m$.
\end{definition}

\begin{definition}  Assume that $\Omega \subset \C^n$, $n>1$, is a bounded domain, $1\leq m\leq n$.
We say that $\Omega$ is \emph{$m$-hyperconvex} if it admits an exhaustion function that is negative
and $m$-subharmonic, i.e. the closure of the set $\{z\in\Omega : \varphi(z)<c\}$ is compact in $\Omega$, for every $c\in (-\infty, 0)$.
\end{definition}

For further information about the geometry of $m$-hyperconvex domains, see e.g.~\cite{ACH}. Throughout this paper we shall assume that $\Omega$ is a $m$-hyperconvex domain.  We say
that a $m$-subharmonic function $\varphi$ on  $\Omega$ belongs to:

\begin{itemize}\itemsep2mm
\item[$(i)$] $\E_{0,m}$ if, $\varphi$ is bounded,
\[
\lim_{z\rightarrow\xi} \varphi (z)=0 \quad \text{ for every } \xi\in\partial\Omega\, ,
\]
and
\[
\int_{\Omega} (dd^c\varphi)^m\wedge\beta^{n-m}<\infty\, ;
\]
\item[$(ii)$]  $\E_{1,m}$ if, there exists a decreasing sequence, $\{u_{j}\}$, $u_{j}\in\E_{0,m}$,
that converges pointwise to $u$ on $\Omega$, as $j$ tends to $\infty$, and
\[
\sup_{j} e_{1,m}(u_j)=\sup_{j}\int_{\Omega}(-u_{j})(dd^c u_j)^m\wedge\beta^{n-m}< \infty\, .
\]
\end{itemize}
In~\cite{L,L3}, it was proved that for $u\in \E_{1,m}$ the complex Hessian operator, $\operatorname H_m(u)$, is well-defined, where
\[
\operatorname{H}_m(u)=(dd^cu)^m\wedge\beta^{n-m}\, .
\]

Theorem~\ref{thm_holder} is essential for us when dealing with $\E_{1,m}$.

\begin{theorem}\label{thm_holder} Let $1\leq m\leq n$, and let $\Omega$ be a bounded $m$-hyperconvex domain in $\mathbb C^n$, $n>1$. Let $u_0,u_1,\ldots ,
u_n\in\E_{1,m}$. If $n\geq 2$, then
\begin{multline*}
\int_\Omega (-u_0) dd^c u_1\wedge\cdots\wedge dd^c u_m\wedge \beta^{n-m}\\ \leq
\; e_{1,m}(u_0)^{1/(1+m)}e_{1,m}(u_1)^{1/(1+m)}\cdots
e_{1,m}(u_m)^{1/(1+m)}\, .
\end{multline*}
\end{theorem}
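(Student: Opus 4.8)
The plan is to prove the inequality by a two-step reduction: first establish the case where all functions lie in the bounded class $\E_{0,m}$ by an integration-by-parts/polarization argument, then pass to the full class $\E_{1,m}$ by monotone approximation. For the bounded case, the natural starting point is the symmetric multilinear inequality that generalizes the classical Brunn--Minkowski/H\"older bound for mixed Hessian measures. Concretely, I would first prove the ``one-pivot'' estimate
\[
\int_\Omega (-u_0)\, dd^c u_1\wedge\cdots\wedge dd^c u_m\wedge\be \le \prod_{j=0}^{m} \left(\int_\Omega (-u_j)\, dd^c u_j \wedge (dd^c w)^{m-1}\wedge \be\right)^{?}
\]
and iterate; but the cleaner route is the standard Cegrell-type argument: by integration by parts move one $dd^c$ off $u_1$ onto $u_0$, write $dd^c u_0 \wedge dd^c u_2 \wedge \cdots$, and apply the Cauchy--Schwarz inequality for the positive-definite bilinear form $(\varphi,\psi)\mapsto \int_\Omega dd^c\varphi\wedge dd^c\psi\wedge T$ where $T = dd^c u_3 \wedge\cdots\wedge dd^c u_m\wedge\be$ is a fixed closed positive current. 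Repeated application of this Cauchy--Schwarz step, together with relabeling, should symmetrize the estimate and produce the exponents $1/(1+m)$ after collecting the $m+1$ factors.

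The key technical inputs are: (a) integration by parts is valid for functions in $\E_{0,m}$ (all terms finite, boundary values zero), which is classical in this Caffarelli--Nirenberg--Spruck setting; (b) the bilinear form above is positive semidefinite on $\E_{0,m}$, so Cauchy--Schwarz applies; and (c) the elementary fact that if one has $\int (-u_0) dd^c u_1 \wedge \mu \le \left(\int (-u_0) dd^c u_0 \wedge \mu\right)^{1/2}\left(\int(-u_1)dd^c u_1\wedge\mu\right)^{1/2}$ for a suitable measure-valued $\mu$, then iterating over all slots yields the symmetric product. I would set $w$-independent currents carefully so that after $m$ applications each $e_{1,m}(u_j)$ appears to a total power summing correctly; the bookkeeping here is the routine part. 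The mild hypothesis $n\ge 2$ is used because for $n=1$ one has $m=n=1$ and the form $\be^{n-m}=\be^{0}$, degenerate edge cases; the statement simply excludes it.

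For the passage from $\E_{0,m}$ to $\E_{1,m}$, I would take, for each $j$, a decreasing sequence $u_j^{(k)}\in\E_{0,m}$ with $u_j^{(k)}\downarrow u_j$ and $\sup_k e_{1,m}(u_j^{(k)}) <\infty$, which exists by definition of $\E_{1,m}$. Since the measures $dd^c u_1^{(k)}\wedge\cdots\wedge dd^c u_m^{(k)}\wedge\be$ converge weakly and $-u_0^{(k)} \uparrow -u_0$... wait, decreasing $u_0^{(k)}$ means $-u_0^{(k)}$ increasing to $-u_0$ --- so by monotone convergence on the left and weak convergence of the Hessian-type measures (continuity of the mixed Hessian operator along decreasing sequences in $\E_{1,m}$, established in \cite{L,L3}), the left-hand side passes to the limit, while $e_{1,m}(u_j^{(k)}) \to e_{1,m}(u_j)$. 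Applying the bounded-case inequality to the $u_j^{(k)}$ and letting $k\to\infty$ gives the result. A small subtlety: one must ensure the limit of the left-hand integrals is exactly $\int_\Omega(-u_0)\,dd^c u_1\wedge\cdots\wedge dd^c u_m\wedge\be$ and not merely a liminf; this follows from lower semicontinuity in one direction and the uniform energy bound plus the comparison-principle-type estimates in the $m$-subharmonic toolkit in the other.

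The main obstacle I anticipate is not any single deep step but making the Cauchy--Schwarz iteration produce precisely the symmetric exponent $1/(1+m)$ rather than a weaker mixed bound: one has to be disciplined about which current is held fixed at each stage and track that, after the full iteration, the product telescopes so that each of the $m+1$ energies $e_{1,m}(u_0),\dots,e_{1,m}(u_m)$ is raised to exactly the power $1/(1+m)$. A secondary obstacle is verifying the integration-by-parts and weak-continuity statements at the exact level of generality of $\E_{1,m}$ (as opposed to $\E_{0,m}$), but those are available from \cite{L,L3} and can be cited.
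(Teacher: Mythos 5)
The paper offers no proof of this theorem at all --- it simply cites Lu, Nguyen, and (for $m=n$) Persson --- and your sketch is precisely the standard integration-by-parts-plus-Cauchy--Schwarz argument from those references, so it is essentially the same approach. The only caveat is that the step you defer as ``routine bookkeeping'' is where the actual content lives: iterating the basic inequality $\int(-u)dd^cv\wedge T\le(\int(-u)dd^cu\wedge T)^{1/2}(\int(-v)dd^cv\wedge T)^{1/2}$ does not directly hand you the exponent $1/(m+1)$, but instead produces a self-referential recursion (e.g.\ for $m=2$, writing $x=\int_\Omega(-u_0)dd^cu_0\wedge dd^cu_2\wedge\beta^{n-2}$ one gets $x\le e_{1,m}(u_0)^{1/2}e_{1,m}(u_2)^{1/4}x^{1/4}$, hence $x\le e_{1,m}(u_0)^{2/3}e_{1,m}(u_2)^{1/3}$) which must be solved and fed into an induction on $m$; this recursion does close and yields exactly the symmetric exponents, so the plan is sound.
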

\begin{proof}
See e.g. Lu~\cite{L,L3}, and Nguyen~\cite{thien}. For the case when $m=n$ see Theorem~3.4 in~\cite{persson} (see also~\cite{czyz_energy,cegrell_pc,CP}).
\end{proof}

The following comparison principles will come in handy, see~\cite{L3} for proofs.

\begin{theorem} Let $1\leq m\leq n$, and let $\Omega$ be a bounded $m$-hyperconvex domain in $\mathbb C^n$, $n>1$. Let $u,v\in \mathcal E_{1,m}$. Then
\begin{enumerate}
\item
\[
\int_{\{u<v\}}\HH(v)\leq \int_{\{u<v\}}\HH(u);
\]
\item if $\HH(v)\leq \HH(u)$, then $u\leq v$.
\end{enumerate}
\end{theorem}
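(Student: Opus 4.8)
The plan is to prove both comparison principles together, reducing each to a known statement in the literature or to a truncation argument that brings us into the bounded class \Eob{\Omega} where the classical comparison principle holds. The key technical point is that functions in \E_{1,m} need not be bounded, so one cannot apply the Bedford-Taylor type comparison principle directly; the standard remedy is to truncate. For the first inequality, I would set $u_j=\max(u,-j)$ and $v_j=\max(v,-j)$, which are bounded $m$-subharmonic functions decreasing to $u$ and $v$. On the open set $\{u<v\}$, for $j$ large enough, one has $\{u_j<v_j\}\cap K=\{u<v\}\cap K$ on any compact $K$ where both $u$ and $v$ are $>-j$, and more importantly $\{u_j<v_j\}\supset\{u<v\}$ with the difference supported where $u\leq -j$. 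The standard comparison principle for bounded $m$-subharmonic functions (see B\l ocki~\cite{B} and Lu~\cite{L3}) gives $\int_{\{u_j<v_j\}}\HH(v_j)\leq\int_{\{u_j<v_j\}}\HH(u_j)$, and then I would pass to the limit: on $\{u<v\}$ the measures $\HH(u_j)$ and $\HH(v_j)$ converge (the Hessian operator is continuous under decreasing limits in this class, by~\cite{L,L3}), and the contributions from $\{u_j<v_j\}\setminus\{u<v\}$ are controlled because there $v\leq u\leq -j$, so this set shrinks to a set where $u=-\infty$, which is $\HH(u)$- and $\HH(v)$-negligible for functions in \E_{1,m}.

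For part $(2)$, assume $\HH(v)\leq\HH(u)$ and suppose for contradiction that the open set $U=\{u<v\}$ is nonempty. Fix $\varepsilon>0$ small and consider $U_\varepsilon=\{u<v-\varepsilon\}$, which is still nonempty for $\varepsilon$ small. Apply part $(1)$ with $v$ replaced by $v-\varepsilon$ wherever legitimate (note $v-\varepsilon$ is $m$-subharmonic but no longer in \E_{1,m} because it does not tend to $0$ at the boundary; to handle this I would instead work with the functions $v_\varepsilon$ obtained by a standard approximation, or directly run the truncation argument of the preceding paragraph on $U_\varepsilon$). On $U_\varepsilon$ we would get $\int_{U_\varepsilon}\HH(v)\leq\int_{U_\varepsilon}\HH(u)\leq\int_{U_\varepsilon}\HH(v)$, forcing $\int_{U_\varepsilon}\HH(v)=\int_{U_\varepsilon}\HH(u)$. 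The decisive extra ingredient is that on $U_\varepsilon$ one actually has a \emph{strict} gain: comparing $u$ with $\max(u,v-\varepsilon)$ and using that $dd^c$ of the maximum dominates, together with the fact that where $u<v-\varepsilon$ the maximum equals $v-\varepsilon$, one derives a strict inequality unless $U_\varepsilon$ is empty. This is exactly the mechanism in Lu~\cite{L3}, and I would cite it; the cleanest route is to invoke the version of the comparison principle that already includes the strict-inequality refinement and conclude $U_\varepsilon=\emptyset$ for every $\varepsilon>0$, hence $U=\emptyset$, i.e. $u\leq v$.

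Since the theorem statement itself says ``see~\cite{L3} for proofs,'' I expect the intended ``proof'' here to be essentially a pointer to the literature together with the remark that the passage from the bounded class to \E_{1,m} is carried out by the truncation $\max(\cdot,-j)$ and the continuity of \HH\ under decreasing sequences in \E_{1,m}. Accordingly I would keep the write-up short: state that $(1)$ follows from the bounded case of~\cite{B,L3} after truncation and a limiting argument, and that $(2)$ follows from $(1)$ by the standard contradiction argument on the sublevel sets $\{u<v-\varepsilon\}$ as in~\cite{L3}. The main obstacle, and the only place requiring genuine care, is justifying the limit in $(1)$: one must check that the ``boundary'' set $\{u_j<v_j\}\setminus\{u<v\}$ carries vanishing mass in the limit for both $\HH(u_j)$ and $\HH(v_j)$, which relies on the energy bound $e_{1,m}<\infty$ guaranteeing that $\{u=-\infty\}$ is negligible for these Hessian measures — a fact that is available from the theory of \E_{1,m} developed in~\cite{L,L3} and recalled in Section~\ref{sec_prelim}.
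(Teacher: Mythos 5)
The paper offers no proof of this theorem at all: it is stated with the single line ``see~\cite{L3} for proofs,'' exactly as you anticipated in your closing paragraph. So there is nothing in the paper to compare your argument against, and the only question is whether your sketch would stand on its own. As written, it would not.

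In part (1), the inclusion you rely on is backwards. With $u_j=\max(u,-j)$ and $v_j=\max(v,-j)$ one checks directly that $\{u_j<v_j\}=\{u<v\}\setminus\{v\le -j\}$, so $\{u_j<v_j\}\subset\{u<v\}$, with the discrepancy sitting where $v\le -j$ (not where $u\le -j$). More importantly, the limit passage is not merely a matter of the negligibility of $\{u=-\infty\}$: the set $\{u<v\}$ is neither open nor closed (both functions are only upper semicontinuous), so weak convergence of $\HH(u_j)$ and $\HH(v_j)$ gives no control, in either direction, of the integrals over $\{u<v\}$. That is precisely where the real work lies, and the standard proofs avoid it altogether, e.g.\ by comparing $\HH(\max(u,v))$ with $\HH(u)$ via mass monotonicity and the plurifine locality of $\HH$ (the mechanism behind Proposition~\ref{dp2}); this is presumably why the authors simply defer to~\cite{L3}.

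Part (2) argues about the wrong set. To prove $u\le v$ you must show that $\{v<u\}$ is empty, whereas you suppose for contradiction that $U=\{u<v\}$ is nonempty --- a set whose nonemptiness is perfectly compatible with the conclusion (and which, incidentally, need not be open). Running your scheme on the correct set does give $\int_{\{v<u\}}\HH(u)=\int_{\{v<u\}}\HH(v)$ by combining part (1) with the hypothesis $\HH(v)\le\HH(u)$, but this equality alone does not force $\{v<u\}$ to be empty (both integrals could vanish there), and the ``strict gain'' you invoke is left unspecified. The standard way to close this gap is a perturbation by a function $w_0\in\mathcal E_{0,m}$ with $\HH(w_0)=dV_{2n}$, in the spirit of the proof of Proposition~\ref{dp} given in the paper; as it stands, your part (2) is not a proof.
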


We need the following domination principle.

\begin{proposition}\label{dp} Let $1\leq m\leq n$, and let $\Omega$ be a bounded $m$-hyperconvex domain in $\mathbb C^n$, $n>1$.  Let $u,v\in \mathcal E_{1,m}$ are such that $\HH(u)(u<v)=0$, then $u\geq v$.
\end{proposition}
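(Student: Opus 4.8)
The plan is to deduce the domination principle from the comparison principle~(1) stated just above, using a standard "perturb by a small bounded exhaustion function" trick to convert the hypothesis $\HH(u)(\{u<v\})=0$ into a genuine strict inequality on a set whose Hessian measure can be compared. First I would fix a negative, bounded $m$-subharmonic exhaustion function $\rho\in\E_{0,m}$ of $\Omega$ (which exists by $m$-hyperconvexity), and for $\varepsilon>0$ consider $v_\varepsilon := v+\varepsilon\rho\in\E_{1,m}$. Note $v_\varepsilon\le v$, so $\{u<v_\varepsilon\}\subset\{u<v\}$. Since $\HH$ is homogeneous and the wedge of positive currents is positive, on $\{u<v_\varepsilon\}$ one has, by the comparison principle applied to $u$ and $v_\varepsilon$,
\[
\int_{\{u<v_\varepsilon\}}\HH(v_\varepsilon)\le \int_{\{u<v_\varepsilon\}}\HH(u)\le \int_{\{u<v\}}\HH(u)=0 .
\]
On the other hand $\HH(v_\varepsilon)=(dd^c(v+\varepsilon\rho))^m\wedge\beta^{n-m}\ge \varepsilon^m\,(dd^c\rho)^m\wedge\beta^{n-m}=\varepsilon^m\HH(\rho)$ as positive currents, because all the mixed terms in the expansion are positive. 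Hence $\int_{\{u<v_\varepsilon\}}\HH(\rho)=0$.

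The key point is then that $\HH(\rho)$ is a measure that charges every nonempty open subset of $\Omega$ — or, more robustly, that the set $\{u<v_\varepsilon\}$ is open (by upper semicontinuity of $u$ and, since $v+\varepsilon\rho$ may only be quasi-continuous, one works with a suitable version) and that $\{u<v_\varepsilon\}$ being $\HH(\rho)$-null forces it to be pluripolar, in fact empty up to an $m$-polar set; combined with the fact that $u$ and $v_\varepsilon$ are determined off $m$-polar sets by their being in $\E_{1,m}$, one concludes $u\ge v_\varepsilon=v+\varepsilon\rho$ outside an $m$-polar set, and since both sides are $m$-subharmonic this inequality in fact holds everywhere. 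Letting $\varepsilon\to 0^+$ and using $\rho$ bounded (so $\varepsilon\rho\to 0$ pointwise) gives $u\ge v$ on $\Omega$, as claimed.

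I would present the middle step slightly differently to keep it clean: instead of invoking "charges every open set", use that for $w\in\E_{0,m}$ and any Borel set $E$ with $\int_E\HH(w)=0$ one has $w=0$ quasi-everywhere on the interior, or more simply combine with comparison principle~(2): from $\int_{\{u<v_\varepsilon\}}\HH(v_\varepsilon)=0$ and positivity, run the argument that $\HH(\max(u,v_\varepsilon))=\mathbf{1}_{\{u\ge v_\varepsilon\}}\HH(u)+\mathbf{1}_{\{u<v_\varepsilon\}}\HH(v_\varepsilon)$, so $\HH(\max(u,v_\varepsilon))\le \HH(u)$ on all of $\Omega$ (the $\{u<v_\varepsilon\}$-part vanishes), whence comparison principle~(2) yields $u\le \max(u,v_\varepsilon)$... which is trivial; the useful direction is $\HH(\max(u,v_\varepsilon))\ge$ something. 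The cleanest route is really the first one: $\int_{\{u<v_\varepsilon\}}\HH(\rho)=0$ with $\HH(\rho)$ a nonnegative measure of positive mass on every component forces $\{u<v_\varepsilon\}$ to have empty interior; since this set is open it is empty, so $u\ge v_\varepsilon$ everywhere, and we let $\varepsilon\to0$.

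The main obstacle I anticipate is the regularity/quasi-continuity bookkeeping needed to assert that $\{u<v_\varepsilon\}$ is (essentially) open and that an $\HH(\rho)$-null such set must be empty: $v$ need not be continuous, only $m$-subharmonic, so one must either work with the canonical upper-semicontinuous representatives and the fact that $\HH(\rho)$ dominates a positive constant times Lebesgue measure is false in general (it can be singular), or instead use the comparison-principle machinery to sidestep measure-charging entirely. The robust fix is: apply comparison principle~(1) to $u$ and $v_\varepsilon$ on $G:=\{u<v_\varepsilon\}$ to get $\int_G\HH(v_\varepsilon)\le\int_G\HH(u)=0$; then $v_\varepsilon\in\E_{1,m}$ with $\int_G\HH(v_\varepsilon)=0$ and, comparing $v_\varepsilon$ with $(1-\delta)v_\varepsilon+\delta\rho$ on $G$, iterate to force $\operatorname{cap}_m(G)=0$, hence $G$ is $m$-polar; finally $u\ge v_\varepsilon$ off an $m$-polar set and thus everywhere by $m$-subharmonicity, and $\varepsilon\to0$ finishes. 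This is the step I would write out most carefully.
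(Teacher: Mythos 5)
Your overall strategy---perturb $v$ downward by a small multiple of a bounded function in $\mathcal E_{0,m}$, apply the comparison principle on the perturbed sublevel set, and use superadditivity of $\HH$ to extract $\varepsilon^m\int_{\{u<v+\varepsilon\rho\}}\HH(\rho)=0$---is exactly the paper's, but you stop one step short of the ingredient that makes it close. The paper does not take an arbitrary exhaustion $\rho$: it takes $w_0\in\mathcal E_{0,m}$ solving $\HH(w_0)=dV_{2n}$ (available on a bounded $m$-hyperconvex domain by the subsolution theorem~\cite{Coung}), so that the vanishing integral reads $t^mV_{2n}(\{u<v+tw_0\})=0$; letting $t\to0^+$ gives $V_{2n}(\{u<v\})=0$, hence $u\ge v$ a.e.\ and then everywhere by the sub-mean-value property. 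With a generic $\rho$ the conclusion $\int_{\{u<v_\varepsilon\}}\HH(\rho)=0$ is, as you yourself observe, worthless: $\HH(\rho)$ may be singular and $\{u<v_\varepsilon\}$ is not open (it is a sublevel set of a difference of u.s.c.\ functions). None of your proposed repairs is actually carried out, and the final ``compare $v_\varepsilon$ with $(1-\delta)v_\varepsilon+\delta\rho$ and iterate to force $\operatorname{cap}_m(G)=0$'' is not an argument---from $\int_G\HH(v_\varepsilon)=0$ for a single $v_\varepsilon$ one cannot conclude that $G$ is $m$-polar. The capacity idea can be salvaged, but only by quantifying over all test functions: for every $\varphi\in\mathcal E_{0,m}$ with $-1\le\varphi\le0$ one has $\{u<v-\varepsilon\}\subset\{u<v+\varepsilon\varphi\}$, and your computation applied to $v+\varepsilon\varphi$ gives $\int_{\{u<v-\varepsilon\}}\HH(\varphi)=0$ uniformly in $\varphi$, whence $\operatorname{cap}_m(\{u<v-\varepsilon\})=0$ and one concludes as before. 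That is a correct variant, but it is not what you wrote; as it stands the decisive step is a genuine gap.

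Ironically, one of the routes you abandoned does work and is the shortest of all, except that you wrote the key inequality backwards. Proposition~\ref{dp2} (quoted from~\cite{L3}, so no circularity) gives $\HH(\max(u,v))\ge\chi_{\{u\ge v\}}\HH(u)+\chi_{\{u<v\}}\HH(v)\ge\chi_{\{u\ge v\}}\HH(u)=\HH(u)$, the last equality being precisely the hypothesis $\HH(u)(\{u<v\})=0$; no perturbation is needed at all. Since $\max(u,v)\in\mathcal E_{1,m}$ (it dominates $u$), comparison principle~(2) applied to the pair $\max(u,v)$, $u$ yields $\max(u,v)\le u$, i.e.\ $v\le u$. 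You asserted $\HH(\max(u,v_\varepsilon))\le\HH(u)$ and rightly dismissed the resulting conclusion as trivial; the inequality supplied by Proposition~\ref{dp2} points the other way and gives the proposition directly.
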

\begin{proof} Let $w_0\in \mathcal E_{0,m}$ be such that $dV_{2n}=\HH(w_0)$. Let $t>0$, then we have by the comparison principle
\begin{multline*}
t^m\int_{\{u<v+tw_0\}}dV_{2n}=t^m\int_{\{u<v+tw_0\}}\HH(w_0)\leq
\int_{\{u<v+tw_0\}}\HH(v+tw_0)\leq\\
\int_{\{u<v+tw_0\}}\HH(u)\leq \int_{\{u<v\}}\HH(u)=0.
\end{multline*}
Therefore, $V_{2n}(\{u<v\})=\lim_{t\to 0}V_{2n}(\{u<v+tw_0\})=0$, so $u\geq v$ a.e. Hence, $u\geq v$.
\end{proof}

The following result can be found in~\cite{L3}.

\begin{proposition}\label{dp2} Let $1\leq m\leq n$, and let $\Omega$ be a bounded $m$-hyperconvex domain in $\mathbb C^n$, $n>1$. Let $u,v\in \mathcal E_{1,m}$ then
\[
\HH(\max(u,v))\geq \chi_{\{u\geq v\}}\HH(u)+\chi_{\{u<v\}}\HH(v).
\]
\end{proposition}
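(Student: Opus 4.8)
The plan is to establish the inequality first for smooth $m$-subharmonic $u,v$, where it is essentially immediate, then to upgrade the strict superlevel sets to non-strict ones by an $\varepsilon$-shift, and finally to pass to the limit; the genuine difficulty lies in this last step. Since both sides of the asserted inequality are measures produced by the local operator $\HH$, it suffices to argue on an arbitrary ball $B$ with $\overline B\subset\Omega$, and there one may choose, by local regularization, smooth $m$-subharmonic functions $u_j\downarrow u$ and $v_j\downarrow v$. For smooth $u_j,v_j$ the sets $\{u_j>v_j\}$ and $\{u_j<v_j\}$ are open, $\max(u_j,v_j)$ coincides with $u_j$ on the former and with $v_j$ on the latter, hence $\HH(\max(u_j,v_j))$ agrees there with $\HH(u_j)$, respectively $\HH(v_j)$; dropping the non-negative mass carried by the closed set $\{u_j=v_j\}$ gives
\[
\HH(\max(u_j,v_j))\;\geq\;\chi_{\{u_j>v_j\}}\HH(u_j)+\chi_{\{u_j<v_j\}}\HH(v_j).
\]

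To replace $\{u_j>v_j\}$ by $\{u_j\geq v_j\}$, I would apply the displayed inequality to the pair $(u_j,\,v_j-\varepsilon)$ for $\varepsilon>0$, use $\HH(v_j-\varepsilon)=\HH(v_j)$ together with $\{u_j>v_j-\varepsilon\}\supseteq\{u_j\geq v_j\}$, and let $\varepsilon\downarrow0$. Then $\max(u_j,v_j-\varepsilon)\uparrow\max(u_j,v_j)$, so the left-hand side converges weakly to $\HH(\max(u_j,v_j))$ by continuity of the Hessian operator along increasing, locally bounded sequences, whereas $\chi_{\{u_j>v_j-\varepsilon\}}\downarrow\chi_{\{u_j\geq v_j\}}$ and $\chi_{\{u_j<v_j-\varepsilon\}}\uparrow\chi_{\{u_j<v_j\}}$, so monotone convergence against the fixed measures $\HH(u_j)$ and $\HH(v_j)$ yields the Proposition when $u,v$ are smooth.

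Finally I would let $j\to\infty$. From $u_j\downarrow u$ and $v_j\downarrow v$ one gets $\max(u_j,v_j)\downarrow\max(u,v)$, and the weak continuity of $\HH$ along decreasing sequences gives $\HH(\max(u_j,v_j))\to\HH(\max(u,v))$, $\HH(u_j)\to\HH(u)$ and $\HH(v_j)\to\HH(v)$ weakly; moreover on $\{u>v\}$ one has $u_j>v_j$ for all large $j$, and on $\{u<v\}$ one has $u_j<v_j$ for all large $j$, so $\chi_{\{u_j>v_j\}}\to\chi_{\{u>v\}}$ and $\chi_{\{u_j<v_j\}}\to\chi_{\{u<v\}}$ outside the contact set $\{u=v\}$. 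The main obstacle is to reconcile this pointwise convergence of indicator functions with the weak convergence of the measures $\HH(u_j)$: weak convergence by itself gives no lower bound for $\int\phi\,\chi_{\{u_j>v_j\}}\,d\HH(u_j)$ in terms of $\int\phi\,\chi_{\{u>v\}}\,d\HH(u)$, since the set $\{u>v\}$ is in general neither open nor closed and mass of $\HH(u_j)$ may concentrate in the limit. I would circumvent this by using that $m$-subharmonic functions are quasi-continuous with respect to $\operatorname{cap}_m$ and that a decreasing sequence converges in $\operatorname{cap}_m$: for $\delta>0$ choose an open set $O$ with $\operatorname{cap}_m(O)<\delta$ outside which $u$ and $v$ are continuous, so that $\{u>v\}\setminus O$ is relatively open in $\Omega\setminus O$ and hence equals $W\setminus O$ for an open set $W$, with $\max(u,v)=u$ on $W\setminus O$; since the Hessian measures involved are uniformly absolutely continuous with respect to $\operatorname{cap}_m$, the portions supported in $O$ are negligible as $\delta\to0$, and comparing $\HH(\max(u,v))$ with $\HH(u)$ on $W$---where they differ only on $W\cap O$---yields $\chi_{\{u>v\}}\HH(u)\leq\HH(\max(u,v))$, and, symmetrically, $\chi_{\{u<v\}}\HH(v)\leq\HH(\max(u,v))$ on the disjoint set $\{u<v\}$. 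Adding these two estimates and once more performing the $\varepsilon$-shift---now at the level of $\mathcal E_{1,m}$---to pass from $\{>\}$ to $\{\geq\}$ gives the asserted inequality.
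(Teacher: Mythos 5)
The paper itself gives no argument for this proposition --- it is quoted from Lu~\cite{L3} --- so your proposal has to be measured against the standard proof. Your architecture is the right one: locality of $\HH$ on open sets for continuous (smooth) functions, the $\varepsilon$-shift $v\mapsto v-\varepsilon$ (equivalently $u\mapsto u+\varepsilon$) to upgrade $\{u>v\}$ to $\{u\geq v\}$, and the observation that the two lower bounds are carried by disjoint Borel sets and may therefore be added. Those parts are correct, and you correctly identify where the real difficulty sits.

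The gap is in the step you flag as the main obstacle: the quasi-continuity argument you offer does not close it. Writing $\{u>v\}\setminus O=W\setminus O$ with $W$ open and $\operatorname{cap}_m(O)<\delta$, you infer that since $\max(u,v)$ and $u$ ``differ only on $W\cap O$'' and the Hessian measures give little mass to $O$, the measures $\HH(\max(u,v))$ and $\HH(u)$ must essentially agree on $W\setminus O$. This inference is unjustified: the bound $\HH(\varphi)(O)\leq\|\varphi\|_\infty^m\operatorname{cap}_m(O)$ controls the mass \emph{on} $O$, but says nothing about whether the two measures coincide on $W\setminus O$. Genuine locality of $\HH$ applies only on the open set $W\setminus\overline{O}$, and that set can be empty --- take $O$ a union of tiny balls centred at a countable dense subset, with total capacity $<\delta$ --- so the argument produces nothing. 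The assertion ``two $m$-subharmonic functions agreeing off a set of small capacity have Hessian measures agreeing off that set'' is precisely the plurifine locality of the Hessian operator (the analogue of Bedford--Taylor's fine-topology theorem), i.e.\ the actual content of the proposition; it cannot be obtained from quasi-continuity plus negligibility of $O$ alone. The standard way to finish is instead to prove $\chi_{\{u>v\}}\HH(\max(u,v))=\chi_{\{u>v\}}\HH(u)$ via the convergence theorem for bounded quasi-continuous functions: if $w_j\downarrow w$ then $\int f\,\HH(w_j)\to\int f\,\HH(w)$ for such $f$, and one tests against $f_k=\min\bigl(1,k(u-v)^+\bigr)$, which increase to $\chi_{\{u>v\}}$; this is the ingredient your sketch is missing. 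Two further technical points: functions of $\mathcal E_{1,m}$ are unbounded, so you must first truncate ($u_k=\max(u,-k)$) before any uniform-boundedness or capacity estimate is available, adding one more limit with the same difficulty; and in your last paragraph the pointwise convergence $\chi_{\{u_j>v_j\}}\to\chi_{\{u>v\}}$ off $\{u=v\}$ is never actually reconciled with the weak convergence $\HH(u_j)\to\HH(u)$ --- the paragraph silently abandons the approximating sequence and argues directly with $u,v$, so the regularization scheme does no work in the end.
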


We shall need the following convergence results. We believe that it is well known, but we can not find an appropriate reference.

\begin{proposition}\label{increasing} Let $1\leq m\leq n$, and let $\Omega$ be a bounded $m$-hyperconvex domain in $\mathbb C^n$, $n>1$.
\begin{enumerate}
\item Suppose $u^k\in\mathcal E_1, 1\leq k \leq m+1$. For any sequences $u_j^k\in\mathcal E_1$ that increases to $u^k$,
$j\to\infty$, and $\varphi\in \mathcal E_1$, it holds
\[
\lim_{j\to \infty}\int_{\Omega}(-\varphi)dd^cu_j^1\wedge \dots\wedge dd^cu_j^m\wedge\be=\int_{\Omega}(-\varphi)dd^cu^1\wedge \dots\wedge dd^cu^m\wedge\be.
\]
\item Suppose $u_j, u\in\mathcal E_1$, $j\in \mathbb N$. If $u_j$ increases to $u$,
$j\to\infty$, then
\[
\lim_{j\to \infty}\int_{\Omega}(-u_j)\HH(u_j)=\int_{\Omega}(-u)\HH(u).
\]
\end{enumerate}
\end{proposition}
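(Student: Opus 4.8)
The plan is to reduce both statements to the well-known convergence theorems for decreasing sequences of bounded $m$-subharmonic functions together with the energy estimates in Theorem~\ref{thm_holder}. First I would prove part $(1)$. Fix decreasing approximating sequences $\varphi_i\in\E_{0,m}$ with $\varphi_i\searrow\varphi$ and, for each $k$, $u_{j,i}^k\in\E_{0,m}$ with $u_{j,i}^k\searrow u_j^k$ as $i\to\infty$ (these exist by definition of $\E_{1,m}$). Since for bounded functions the mixed Hessian measures converge weakly under decreasing limits, and $\varphi_i$ is continuous up to the boundary when we work with a fixed $w_0$, one has control at the level $\E_{0,m}$. The standard strategy is then a diagonal argument: for the \emph{increasing} sequences $u_j^k\nearrow u^k$, one uses that the total masses $\int_\Omega(-\varphi)\,dd^cu_j^1\wedge\cdots\wedge dd^cu_j^m\wedge\be$ are uniformly bounded (by Theorem~\ref{thm_holder}, since $e_{1,m}(u_j^k)\le e_{1,m}(u^k)$ by monotonicity of energy under increasing limits), hence the sequence of measures $dd^cu_j^1\wedge\cdots\wedge dd^cu_j^m\wedge\be$ has locally uniformly bounded mass; passing to a weak limit and identifying it with $dd^cu^1\wedge\cdots\wedge dd^cu^m\wedge\be$ (this identification is where one invokes the continuity of the Hessian operator along increasing sequences in $\E_{1,m}$, which follows from the bounded case by monotone approximation) gives weak convergence of the measures. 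Then testing against $(-\varphi)$, which is lower semicontinuous and can be approximated from below by continuous functions, yields $\liminf_j\int(-\varphi)\,dd^cu_j^1\wedge\cdots\ge\int(-\varphi)\,dd^cu^1\wedge\cdots$; the reverse inequality $\limsup\le$ follows because $u_j^k\le u^k$ forces a monotonicity of the integrals (each $dd^cu_j^k$ term can be compared after integration by parts against the fixed test function, using that the $u_j^k$ increase). Combining the two inequalities gives the claimed limit.

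For part $(2)$, write $e_{1,m}(u_j)=\int_\Omega(-u_j)\HH(u_j)$ and note that $j\mapsto e_{1,m}(u_j)$ is nondecreasing when $u_j\nearrow u$, since $-u_j\searrow -u\ge 0$ decreases while, by part $(1)$ applied with all $u_j^k=u_j$, the measures $\HH(u_j)$ increase in the appropriate weak sense; more carefully, one shows $e_{1,m}(u_j)\le e_{1,m}(u)$ for every $j$ (monotonicity of energy, a consequence of the comparison principle or of Theorem~\ref{thm_holder} with a multilinear expansion), so $\lim_j e_{1,m}(u_j)\le e_{1,m}(u)$. For the reverse inequality I would use part $(1)$ with $\varphi$ replaced by the members of the increasing sequence itself: for fixed $\ell$, $\int_\Omega(-u_\ell)\,dd^cu_j^{\wedge m}\wedge\be\to\int_\Omega(-u_\ell)\,dd^cu^{\wedge m}\wedge\be$ as $j\to\infty$ by part $(1)$ (with test function $\varphi=u_\ell$, legitimate since $u_\ell\in\E_{1,m}$), and then let $\ell\to\infty$ using monotone convergence on the right-hand side, combined with $e_{1,m}(u_j)\ge\int_\Omega(-u_\ell)\HH(u_j)$ for $j\ge\ell$ (because $-u_j\ge -u_\ell$). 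This sandwiches $\lim_j e_{1,m}(u_j)$ between $e_{1,m}(u)$ and itself.

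The main obstacle I expect is the rigorous justification of the weak convergence of the mixed Hessian currents $dd^cu_j^1\wedge\cdots\wedge dd^cu_j^m\wedge\be$ along \emph{increasing} sequences in $\E_{1,m}$, together with the interchange of the weak limit with integration against the unbounded lower semicontinuous test function $(-\varphi)$. In the bounded case ($\E_{0,m}$) this is classical, but in $\E_{1,m}$ the functions are unbounded and the test function is unbounded as well, so one must combine a truncation argument (replacing $\varphi$ by $\max(\varphi,-N)$, $u_j^k$ by $\max(u_j^k,-N)$) with the uniform energy bounds from Theorem~\ref{thm_holder} to control the error terms as $N\to\infty$ uniformly in $j$. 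Handling this double limit — first $j\to\infty$, then $N\to\infty$ — in a way that avoids circularity (since the continuity statement we are proving is itself of this type) is the delicate point; the standard device is to first establish everything for fixed bounded truncations, obtain locally uniform mass bounds, and only then remove the truncation using the quasi-continuity of $m$-subharmonic functions with respect to the capacity $\mathrm{cap}_m$ and the energy estimates.
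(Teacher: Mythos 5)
Your proposal has two genuine gaps. First, in part (1) the crucial step --- identifying the weak limit of $dd^cu_j^1\wedge\cdots\wedge dd^cu_j^m\wedge\be$ with $dd^cu^1\wedge\cdots\wedge dd^cu^m\wedge\be$ along \emph{increasing} sequences in $\mathcal E_{1,m}$ --- is essentially the content of the statement, and you defer it to ``continuity of the Hessian operator along increasing sequences, which follows from the bounded case by monotone approximation''. You flag the circularity yourself but do not resolve it, and the truncation scheme you sketch does not obviously break it. The paper's route avoids weak compactness altogether: it first shows that the (monotone) sequence of integrals has a limit independent of the particular increasing sequences chosen, by repeatedly integrating by parts and applying monotone convergence one slot at a time, and then replaces the factors one at a time by induction. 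The base case works because after one integration by parts the integral becomes $\int_\Omega(-u_j^1)\,dd^c\varphi\wedge dd^cu^2\wedge\cdots\wedge dd^cu^m\wedge\be$, where the measure is now \emph{fixed} and the monotone convergence theorem applies; your single integration by parts leaves the remaining factors depending on $j$, so monotone convergence does not apply directly, and the independence-of-sequence lemma is precisely what lets one freeze them in the induction step.

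Second, in part (2) your inequalities point the wrong way. For $u_j\nearrow u\le 0$ one has $u_j\le u$ and hence $e_{1,m}(u_j)\ge e_{1,m}(u)$: the energy is \emph{decreasing} along an increasing sequence (note $e_{1,m}(2u)=2^{m+1}e_{1,m}(u)$ while $2u\le u$), contrary to your claim that $j\mapsto e_{1,m}(u_j)$ is nondecreasing and bounded above by $e_{1,m}(u)$. So the easy direction is $\lim_j e_{1,m}(u_j)\ge e_{1,m}(u)$, not $\le$; likewise for $j\ge\ell$ one has $-u_j\le -u_\ell$, hence $e_{1,m}(u_j)=\int_\Omega(-u_j)\HH(u_j)\le\int_\Omega(-u_\ell)\HH(u_j)$, the reverse of what you wrote. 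With the signs corrected your scheme for part (2) becomes exactly the paper's: $e_{1,m}(u_j)\le\int_\Omega(-u_\ell)\HH(u_j)\to\int_\Omega(-u_\ell)\HH(u)$ by part (1), and then $\int_\Omega(-u_\ell)\HH(u)\to e_{1,m}(u)$ by monotone convergence as $\ell\to\infty$. As written, however, the sandwich does not close.
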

\begin{proof}
\emph{(1)} Note that the sequence $\int_{\Omega}\varphi dd^cu_j^1\wedge \dots \wedge dd^cu_j^m\wedge \be$ is increasing  and the limit does not
depend on the particular sequences $u_j^k$, see e.g.~\cite{cegrell_bdd,L3}.
If $v_j^k$ is another sequence increasing to $u^k$, we get
\begin{multline*}
\int_{\Omega} \varphi dd^cv_j^1\wedge dd^cv_{j}^2\wedge\cdots \wedge dd^cv_{j}^m\wedge\be=\int_{\Omega}
v_j^1dd^c\varphi\wedge dd^cv_j^2\wedge\cdots \wedge dd^cv_j^m\wedge\be\\
\leq \int_{\Omega} u^1dd^c \varphi\wedge dd^cv_j^2\wedge\cdots \wedge dd^cv_j^m\wedge\be \\
=\lim_{s_1\to \infty}\int_{\Omega} u_{s_1}^{1}dd^c \varphi\wedge dd^cv_j^2\wedge\cdots \wedge dd^cv_j^m\wedge\be \\
=\lim_{s_1\to \infty}\int_{\Omega} v_j^2dd^c\varphi \wedge dd^cu_{s_1}^1\wedge\cdots \wedge dd^cv_j^m\wedge\be  \leq
\ldots \leq \\ \lim_{s_1,\dots,s_m\to \infty}\int_{\Omega} \varphi dd^cu_{s_1}^1\wedge\cdots\wedge dd^cu_{s_m}^m\wedge\be\\
\leq \lim\limits_{s\to\infty}\int_{\Omega} \varphi dd^cu_s^1\wedge dd^cu_s^2\wedge\cdots \wedge dd^cu_s^m\wedge\be.
\end{multline*}
Therefore, $\lim_{j\to \infty}\int \varphi dd^cv_j^1\wedge dd^cv_j^2\wedge\dots \wedge dd^cv_j^m\wedge\be$ exists,
and
\begin{multline*}
\lim_{j\to \infty}\int_{\Omega} \varphi dd^cv_j^1\wedge dd^cv_j^2\wedge\dots \wedge dd^cv_j^m\wedge\be\\ \leq
\lim_{j\to \infty}\int_{\Omega} \varphi dd^cu_j^1\wedge dd^cu_{j}^2\wedge\ldots \wedge dd^cu_j^m\wedge\be.
\end{multline*}
Similarly, one can obtain the inverse inequality. Hence, we can conclude that the limits are equal.

We shall prove, by induction, that for any $k\in\{1,\dots,m\}$
\begin{multline}\label{increas}
\lim_{j\to \infty}\int_{\Omega}(-\varphi)dd^cu_j^1\wedge \dots\wedge dd^cu_j^k\wedge dd^{c}u^{k+1}\wedge\dots\wedge dd^cu^m\wedge\be\\
=\int_{\Omega}(-\varphi)dd^cu^1\wedge \dots\wedge dd^cu^m\wedge\be.
\end{multline}
For $k=1$ we obtain by the monotone convergence theorem
\begin{multline*}
\lim_{j\to \infty}\int_{\Omega}(-\varphi)dd^cu_j^1\wedge dd^cu^2\wedge \dots\wedge dd^cu^m\wedge\be=\\
\lim_{j\to \infty}\int_{\Omega}(-u^1_j)dd^c\varphi\wedge dd^cu^2\wedge \dots\wedge dd^cu^m\wedge\be=\\
\int_{\Omega}(-u^1)dd^c\varphi\wedge dd^cu^2\wedge \dots\wedge dd^cu^m\wedge\be=\\
\int_{\Omega}(-\varphi)dd^cu^1\wedge dd^cu^2\wedge \dots\wedge dd^cu^m\wedge\be.
\end{multline*}

Now suppose that (\ref{increas}) is valid for $k=p$. We show that (\ref{increas}) holds for $k=p+1$.
From our assumption we have
\begin{multline*}
\lim_{j\to \infty}\int_{\Omega}(-\varphi)dd^cu_j^1\wedge \dots\wedge dd^cu_j^p\wedge dd^{c}u^{p+1}\wedge\dots\wedge dd^cu^m\wedge\be\\
=\int_{\Omega}(-\varphi)dd^cu^1\wedge \dots\wedge dd^cu^m\wedge\be
\end{multline*}
so it is enough to prove that
\begin{multline*}
\lim_{j\to \infty}\int_{\Omega}(-\varphi)dd^cu_j^1\wedge\ldots \wedge dd^cu_j^p\wedge dd^cu_j^{p+1}\wedge
dd^cu^m\wedge\be=\\
\lim_{j\to \infty}\int_{\Omega}(-\varphi)dd^cu_j^1\wedge\ldots \wedge dd^cu_j^p\wedge dd^cu^{p+1}\wedge
dd^cu^m\wedge\be\, .
\end{multline*}
Since the limit (\ref{increas}) does not depend on particular sequence then above limits are equal.

\emph{(2)} Note that the sequence $e_1(u_j)=\int_{\Omega}(-u_j)\HH(u_j)$ is decreasing and
\[
\lim_{j\to \infty}e_1(u_j)\geq e_1(u).
\]
Now fix $k\leq j$. Then $e_1(u_j)\leq \int_{\Omega}(-u_k)\HH(u_j)$ and then by the first part of Proposition~\ref{increasing}
\[
\lim_{j\to \infty}e_1(u_j)\leq \lim_{j\to \infty}\int_{\Omega}(-u_k)\HH(u_j)=\int_{\Omega}(-u_k)\HH(u).
\]
It follows form the monotone convergence theorem that
\[
\lim_{j\to \infty}e_1(u_j)\leq \lim_{k\to \infty}\int_{\Omega}(-u_k)\HH(u)=e_1(u).
\]
This ends the proof.
\end{proof}

\section{Weighted energy functional}

In this section we shall define and prove some basic properties of the weighted energy functional $\EE$. This functional is sometimes called Aubin-Mabuchi energy functional.

\begin{definition}\label{defef} Let $1\leq m\leq n$, and let $\Omega$ be a bounded $m$-hyperconvex domain in $\mathbb C^n$, $n>1$. Fix $w\in \mathcal E_{1,m}$, known as the weight, and define the weighted energy functional $\EE$ by
\begin{equation}\label{energy}
\mathcal E_{1,m}\ni u\mapsto \EE(u)=\frac {1}{m+1}\sum_{j=0}^m\int_{\Omega}(u-w)(dd^cu)^j\wedge(dd^cw)^{m-j}\wedge\be \in \mathbb R.
\end{equation}
\end{definition}

We continue by proving elementary properties of $\EE$.

\begin{proposition}\label{concave}
Let $1\leq m\leq n$, and let $\Omega$ be a bounded $m$-hyperconvex domain in $\mathbb C^n$, $n>1$. Fix $w,u,v\in \mathcal E_{1,m}$, and for $t\in[0,1]$ define
\[
f(t)=\EE((1-t)u+tv).
\]
Then
\[
\begin{aligned}
f^{\prime}(t)&=\int_{\Omega}(v-u)(dd^c((1-t)u+tv))^m\wedge\be, \\
f^{\prime\prime}(t)&=m\int_{\Omega}(v-u)dd^c(u-v)\wedge(dd^c((1-t)u+tv))^{m-1}\wedge\be.
\end{aligned}
\]
In particular,  $f$ is a concave function and the derivatives of $f$ does not depend on the weight $w$.
\end{proposition}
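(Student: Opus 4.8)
The plan is to compute the two derivatives directly from the definition~\eqref{energy}, treating each of the $m+1$ terms in the sum separately, and then to read off concavity from the sign of $f''$. Write $u_t = (1-t)u+tv$, so that $u_t - w = (u-w) + t(v-u)$ and $\frac{d}{dt}u_t = v-u$. For the $j$-th term in~\eqref{energy} we have the integrand $(u_t-w)(dd^c u_t)^j\wedge(dd^c w)^{m-j}\wedge\be$. Differentiating under the integral sign (justified because everything is polynomial in $t$ with coefficients that are finite by Theorem~\ref{thm_holder}, and the mixed Hessian measures depend continuously on the arguments in the relevant sense), the derivative of this term has two contributions: one from differentiating the factor $(u_t-w)$, giving $(v-u)(dd^c u_t)^j\wedge(dd^c w)^{m-j}\wedge\be$, and one from differentiating $(dd^c u_t)^j$, giving $j\,(u_t-w)\,dd^c(v-u)\wedge(dd^c u_t)^{j-1}\wedge(dd^c w)^{m-j}\wedge\be$. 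In the second contribution I would integrate by parts (Stokes), moving $dd^c$ off $(v-u)$ and onto $(u_t-w)$; since $dd^c(u_t-w) = dd^c u_t - dd^c w$, this splits the $j$-th second-contribution into a piece with $dd^c u_t$ (raising the power of $dd^c u_t$ to $j$ and lowering $dd^c w$ to $m-j$, i.e.\ matching the $j$-th first-contribution) and a piece with $-dd^c w$ (giving the $(j-1)$-st first-contribution with a minus sign).

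Summing over $j=0,\dots,m$ and dividing by $m+1$, the plan is to check that all the "$dd^c w$" correction terms telescope and cancel pairwise, while the surviving terms combine so that each power $(dd^c u_t)^j\wedge(dd^c w)^{m-j}$ for $j=0,\dots,m-1$ appears with the right multiplicity, and the top term $(dd^c u_t)^m\wedge\be$ survives — the net result being exactly $f'(t) = \int_\Omega (v-u)(dd^c u_t)^m\wedge\be$. This is the same bookkeeping identity that underlies the classical fact that the Aubin–Mabuchi functional is a primitive of the complex Monge–Ampère (here: Hessian) operator; I expect it to reduce to the algebraic identity $\sum_{j=0}^m \big[ (v-u)(dd^c u_t)^j(dd^c w)^{m-j} + j(dd^c u_t)^j(dd^c w)^{m-j}(v-u) - j(dd^c u_t)^{j-1}(dd^c w)^{m-j+1}(v-u)\big] = (m+1)(v-u)(dd^c u_t)^m$, which one verifies by shifting the index in the last sum. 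In particular no term involving the weight $w$ survives, which proves the last assertion about $w$-independence of the derivatives.

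For $f''(t)$, I would simply differentiate the formula $f'(t) = \int_\Omega (v-u)(dd^c u_t)^m\wedge\be$ once more: the factor $(v-u)$ is independent of $t$, and differentiating $(dd^c u_t)^m$ gives $m\, dd^c(v-u)\wedge (dd^c u_t)^{m-1}$, so $f''(t) = m\int_\Omega (v-u)\,dd^c(v-u)\wedge(dd^c u_t)^{m-1}\wedge\be$, which is the stated formula up to the harmless sign rewriting $dd^c(v-u) = -dd^c(u-v)$ together with the outer $(v-u) = -(u-v)$. Finally, to get concavity, integrate by parts once more in $f''$:
\[
f''(t) = -m\int_\Omega d(v-u)\wedge d^c(v-u)\wedge(dd^c u_t)^{m-1}\wedge\be \le 0,
\]
because $(dd^c u_t)^{m-1}\wedge\be$ is a closed positive current (as $u_t$ is $m$-subharmonic, being a convex combination of two $m$-subharmonic functions) and $d\psi\wedge d^c\psi\wedge(\text{positive closed form of right bidegree})\ge 0$ for any real function $\psi$; hence $f'' \le 0$ and $f$ is concave.

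The main obstacle is the justification of differentiation under the integral sign and of the integrations by parts for functions in the energy class $\mathcal E_{1,m}$ rather than bounded or smooth ones: the boundary terms in Stokes' theorem must be shown to vanish, and the mixed Hessian measures $(dd^c u_t)^j\wedge(dd^c w)^{m-j}\wedge\be$ must depend on $t$ in a way compatible with passing limits through the integral. The standard route is to approximate $u,v,w$ from above by decreasing sequences in $\mathcal E_{0,m}$, where all manipulations are classical and the boundary terms vanish because the approximants tend to $0$ on $\partial\Omega$, and then pass to the limit using Proposition~\ref{increasing} (applied to the increasing approximations obtained after a sign change, or its decreasing analogue) together with the finiteness estimates of Theorem~\ref{thm_holder} to control all the integrals uniformly in $t\in[0,1]$. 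Once the two derivative formulas are established for the approximants, the limiting identities follow and concavity is inherited.
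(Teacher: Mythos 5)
Your proposal is correct and follows essentially the same route as the paper: both exploit that $f$ is a polynomial of degree $m+1$ in $t$ and use integration by parts to make the weight $w$ drop out of the derivatives (the paper computes all polynomial coefficients $a_l$ explicitly, while you organize the same computation as a product rule plus a telescoping sum $\sum_{j=0}^m\big((j+1)b_j-jb_{j-1}\big)=(m+1)b_m$, which checks out), and both conclude concavity from the final integration by parts $f''(t)=-m\int_\Omega d(v-u)\wedge d^c(v-u)\wedge(dd^c((1-t)u+tv))^{m-1}\wedge\be\le 0$. One small remark: your formula $f''(t)=m\int_\Omega(v-u)\,dd^c(v-u)\wedge(dd^c((1-t)u+tv))^{m-1}\wedge\be$ is the correct one and is exactly what the paper's own proof derives; it differs by a sign from the displayed statement (which must be a typo, since with $dd^c(u-v)$ one would get $f''\ge 0$), so your parenthetical reconciliation via ``two harmless sign flips'' does not actually work out --- but nothing in your argument depends on it.
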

\begin{proof} Note that
\[
\begin{aligned}
f(t)&=\EE((1-t)u+tv)=\\
&\frac {1}{m+1}\sum_{j=0}^m\int_{\Omega}(u-w+t(v-u))(dd^cu+tdd^c(v-u))^j\wedge(dd^cw)^{m-j}\wedge\be
\end{aligned}
\]
is a polynomial $(m+1)f(t)=a_0+a_1t+\dots+a_{m+1}t^{m+1}$ of degree $m+1$, where
\[
\begin{aligned}
a_0&=\sum_{j=0}^m\int_{\Omega}(u-w)(dd^cu)^j\wedge(dd^cw)^{m-j}\wedge\be;\\
a_l&=\sum_{j=l}^m\binom{j}{l}\int_{\Omega}(u-w)(dd^c(v-u))^l\wedge(dd^cu)^{j-l}\wedge(dd^cw)^{m-j}\wedge\be+\\
&\sum_{j=l-1}^m\binom{j}{l-1}\int_{\Omega}(v-u)(dd^c(v-u))^{l-1}\wedge(dd^cu)^{j-l+1}\wedge(dd^cw)^{m-j}\wedge\be;\\
a_{m+1}&=\int_{\Omega}(v-u)(dd^c(v-u))^{m}\wedge\be,
\end{aligned}
\]
for $l\geq 1$. By using integration by parts we obtain
\[
a_l=\binom{m+1}{l}\int_{\Omega}(v-u)(dd^c(v-u))^{l-1}\wedge(dd^cu)^{m-l+1}\wedge\be,
\]
and therefore
\begin{multline*}
f^{\prime}(t)=\frac {1}{m+1}(a_1+2a_2t+\dots+(m+1)a_{m+1}t^m)=\\
\frac {1}{m+1}\sum_{l=1}^{m+1}l\binom{m+1}{l}t^{l-1}\int_{\Omega}(v-u)(dd^c(v-u))^{l-1}\wedge(dd^cu)^{m-l+1}\wedge\be=\\
\sum_{l=1}^{m+1}\binom{m}{l-1}t^{l-1}\int_{\Omega}(v-u)(dd^c(v-u))^{l-1}\wedge(dd^cu)^{m-l+1}\wedge\be=\\
\int_{\Omega}(v-u)(dd^c((1-t)u+tv))^m\wedge\be.
\end{multline*}
Next,  we calculate $f^{\prime\prime}$:
\begin{multline*}
f^{\prime\prime}(t)=\sum_{j=0}^{m-1}(m-j)\binom{m}{j}t^{m-j-1}\int_{\Omega}(v-u)(dd^cu)^{j}\wedge(dd^c(v-u))^{m-j}\wedge\be=\\
\sum_{j=0}^{m-1}\binom{m-1}{j}t^{m-j-1}\int_{\Omega}(v-u)(dd^c(v-u))\wedge(dd^cu)^{j}\wedge(dd^c(v-u))^{m-j-1}\wedge\be=\\
m\int_{\Omega}(v-u)(dd^c(v-u))\wedge(dd^c((1-t)u+tv))^{m-1}\wedge\be=\\
-m\int_{\Omega}d(v-u)\wedge d^c(v-u)\wedge(dd^c((1-t)u+tv))^{m-1}\wedge\be\leq 0,
\end{multline*}
and this ends the proof.
\end{proof}

\begin{proposition}\label{basic} Let $1\leq m\leq n$, and let $\Omega$ be a bounded $m$-hyperconvex domain in $\mathbb C^n$, $n>1$. Fix $w,u,v,u_j\in \mathcal E_{1,m}$. Then
\begin{enumerate}\itemsep2mm
\item
\[
\EE(u)-\EE(v)=\frac {1}{m+1}\sum_{j=0}^m\int_{\Omega}(u-v)(dd^cu)^j\wedge(dd^cv)^{m-j}\wedge \be;
\]

\item
\[
\int_{\Omega}(u-v)\HH(u)\leq \EE(u)-\EE(v)\leq \int_{\Omega}(u-v)\HH(v);
\]

\item if $v\leq u$, then
\[
\frac {1}{m+1}\int_{\Omega}(u-v)\HH(v)\leq \EE(u)-\EE(v)\leq \int_{\Omega}(u-v)\HH(v);
\]

\item if $u_j\searrow u$, then $\EE(u_j)\searrow \EE(u)$;

\item  if $u_j\nearrow u$, where $u=(\lim_{j\to \infty}u_j)^*$, then $\EE(u_j)\nearrow \EE(u)$;

\item
\[
-\frac {1}{m+1}e_{1,m}(u+w)\leq \EE(u)\leq \frac {1}{m+1}e_{1,m}(u+w);
\]

\item $\EE(u)=-\operatorname {E}_u(w)$, $\EE(w)=0$, $\operatorname {E}_0(u)=-\frac {1}{m+1}e_{1,m}(u)$.
\end{enumerate}
\end{proposition}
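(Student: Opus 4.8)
The plan is to prove the seven items of Proposition~\ref{basic} in a logical order that builds on the polynomial identity already established in Proposition~\ref{concave}, treating the telescoping/summation formula (1) as the master identity from which the comparison estimates (2) and (3) follow, then handling monotone convergence (4) and (5), and finally reading off (6) and (7) as special cases.

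First I would prove (1). The natural approach is to apply Proposition~\ref{concave} with the linear segment from $v$ to $u$, i.e. set $f(t) = \EE((1-t)v + tu)$, so that $\EE(u) - \EE(v) = f(1) - f(0) = \int_0^1 f'(t)\,dt$. Since $f'(t) = \int_\Omega (u-v)(dd^c((1-t)v+tu))^m \wedge \be$, expanding $(dd^c((1-t)v+tu))^m = \sum_{j=0}^m \binom{m}{j} t^j (1-t)^{m-j} (dd^c u)^j \wedge (dd^c v)^{m-j}$ by the multilinearity of the Hessian and integrating $\int_0^1 \binom{m}{j} t^j (1-t)^{m-j}\,dt = \frac{1}{m+1}$ (the Beta integral) gives exactly the claimed formula. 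One should check the interchange of integral and sum is legitimate, which follows from finiteness of each term via Theorem~\ref{thm_holder}. An alternative is to expand $\EE(u)$ and $\EE(v)$ directly from Definition~\ref{defef} against the common weight $w$ and do repeated integration by parts, but the segment argument is cleaner.

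Next, (2): starting from (1), write $\EE(u)-\EE(v) = \frac{1}{m+1}\sum_{j=0}^m \int_\Omega (u-v)(dd^cu)^j\wedge(dd^cv)^{m-j}\wedge\be$. The point is that for each intermediate mixed term, integration by parts moves a $dd^c$ off $u$ onto $v$ or vice versa; more precisely one shows $\int_\Omega (u-v)(dd^cu)^{j}\wedge(dd^cv)^{m-j}\wedge\be \leq \int_\Omega (u-v)(dd^cu)^{j-1}\wedge(dd^cv)^{m-j+1}\wedge\be$ because their difference equals $\int_\Omega (u-v)\,dd^c(u-v)\wedge(\cdots) = -\int_\Omega d(u-v)\wedge d^c(u-v)\wedge(\cdots)\le 0$ (the same nonnegativity of $d\varphi\wedge d^c\varphi\wedge(\text{positive})$ used at the end of Proposition~\ref{concave}). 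Hence the summands are monotone in $j$, squeezed between the $j=m$ term $\int_\Omega(u-v)\HH(u)$ and the $j=0$ term $\int_\Omega(u-v)\HH(v)$, and since the average of quantities lying in $[\int(u-v)\HH(u),\int(u-v)\HH(v)]$ lies in that interval, (2) follows. For (3), when $v\le u$ we have $u-v\ge 0$, so the single term $\int_\Omega(u-v)\HH(v)$ dominates every summand in (1), giving the upper bound; for the lower bound, keep only the $j=0$ summand $\frac{1}{m+1}\int_\Omega(u-v)\HH(v)$ and discard the other (nonnegative, since $u-v\ge0$ and the measures are positive) summands.

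For (4) and (5): if $u_j\searrow u$ then $u_j - w \searrow u-w$ and each measure $(dd^cu_j)^k\wedge(dd^cw)^{m-k}\wedge\be$ converges weakly while the integrands decrease; this is the standard Cegrell-type continuity for decreasing sequences in $\E_{1,m}$ (one can invoke the monotone convergence of the Hessian together with $(u_j-w)$ bounded above by $0$ and the uniform energy bound defining $\E_{1,m}$), giving $\EE(u_j)\searrow\EE(u)$ term by term in Definition~\ref{defef}. For increasing sequences, this is precisely where Proposition~\ref{increasing} is used: part (1) of that proposition handles the mixed terms $\int_\Omega(u_j-w)(dd^cu_j)^j\wedge(dd^cw)^{m-j}\wedge\be$ (splitting $(u_j-w)$ and using that increasing convergence of the potentials forces convergence of these mixed integrals), and the monotonicity $\EE(u_j)\nearrow$ follows from concavity/(2). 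Finally (6) is immediate from (1) with $v$ replaced appropriately: applying (2) with the pair $(u, -|u|)$ or more directly noting $\EE(u) = \EE(u)-\EE(w) \cdot$ — here use that by (2), $\int_\Omega(u-w)\HH(u)\le(m+1)\EE(u)\le\cdots$ won't quite give it; instead, estimate each of the $m+1$ summands of $\frac{1}{m+1}\sum_j\int_\Omega(u-w)(dd^cu)^j\wedge(dd^cw)^{m-j}\wedge\be$ in absolute value by $\int_\Omega|u-w|(dd^c(u+w))^m\wedge\be \le e_{1,m}(u+w)$ using positivity and domination of the mixed measure by $(dd^c(u+w))^m\wedge\be$, which is a routine Hessian comparison. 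And (7) is definitional bookkeeping: $\EE = \operatorname{E}_w$, so $\operatorname{E}_u(w) = -\EE(u)$ by the symmetry of formula (1) under swapping $u\leftrightarrow w$ (the sum is antisymmetric since $\sum_j\int(w-u)(dd^cw)^j\wedge(dd^cu)^{m-j}\wedge\be = -\sum_j\int(u-w)(dd^cu)^j\wedge(dd^cw)^{m-j}\wedge\be$ after reindexing $j\mapsto m-j$), $\EE(w)=0$ from Definition~\ref{defef} with $u=w$, and $\operatorname{E}_0(u) = \frac{1}{m+1}\sum_j\int_\Omega(0-u)(dd^c0)^j\wedge\cdots$ collapses to the single $j=m$ term $-\frac{1}{m+1}\int_\Omega u\,\HH(u) = -\frac{1}{m+1}e_{1,m}(u)$.

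The main obstacle I anticipate is making the integration-by-parts manipulations in (1) and (2) rigorous for general functions in $\E_{1,m}$ rather than $\E_{0,m}$: the functions need not be bounded, so one must justify that all mixed Monge–Amp\`ere/Hessian integrals are finite (Theorem~\ref{thm_holder} does this) and that integration by parts is valid — the standard device is to approximate $u,v,w$ by decreasing sequences in $\E_{0,m}$, perform the identities there where integration by parts is classical, and pass to the limit using (4) together with Proposition~\ref{increasing}. Everything else is combinatorial identities with binomial coefficients and the Beta integral, plus the single recurring inequality $\int_\Omega d\varphi\wedge d^c\varphi\wedge(\text{positive current})\ge 0$.
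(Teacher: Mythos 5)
Your overall strategy is sound and runs parallel to the paper's: items (1)--(3) from Proposition~\ref{concave}, (4)--(5) from the monotone convergence machinery, (6)--(7) from the definition. Two of your routes differ in detail from the paper's, and both work. For (1), the paper avoids your Beta-integral computation by the slicker observation that, since the derivative of $t\mapsto\operatorname{E}_{w}((1-t)u+tv)$ is weight-independent (Proposition~\ref{concave}), one may compare it with $t\mapsto\operatorname{E}_{v}((1-t)u+tv)$, which vanishes at $t=1$ and equals the desired sum at $t=0$; your direct expansion using $\int_0^1\binom{m}{j}t^j(1-t)^{m-j}\,dt=\frac1{m+1}$ is a correct alternative. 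For (2), the paper simply uses concavity ($f'(1)\le f(1)-f(0)\le f'(0)$), whereas you prove the discrete monotonicity of the summands in (1) via $\int(u-v)\,dd^c(u-v)\wedge T=-\int d(u-v)\wedge d^c(u-v)\wedge T\le0$; this is the same inequality in telescoped form and is fine.

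The one place where your argument, as written, does not deliver the stated conclusion is (6). Bounding \emph{each} of the $m+1$ summands $\bigl|\int_\Omega(u-w)(dd^cu)^j\wedge(dd^cw)^{m-j}\wedge\be\bigr|$ separately by $e_{1,m}(u+w)$ and then averaging yields only $|\EE(u)|\le e_{1,m}(u+w)$, losing the factor $\frac1{m+1}$ in the statement. The repair is to sum the measures before estimating: since all binomial coefficients are at least $1$, one has $\sum_{j=0}^m(dd^cu)^j\wedge(dd^cw)^{m-j}\le(dd^c(u+w))^m$ as positive measures, and $|u-w|\le-(u+w)$, so the entire sum is at most $e_{1,m}(u+w)$ and the prefactor $\frac1{m+1}$ survives. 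A smaller point concerns (5): your plan of proving term-by-term convergence of the mixed integrals with the varying factor $u_j-w$ requires a diagonal version of Proposition~\ref{increasing}(2) for mixed terms that is not literally stated there; the paper's shortcut is cleaner, namely by (3), $0\le\EE(u)-\EE(u_j)\le\int_\Omega(u-u_j)\HH(u_j)$, and the right-hand side tends to $0$ directly by Proposition~\ref{increasing}.
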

\begin{proof} (1).
Let us define for $t\in[0,1]$
\[
\begin{aligned}
f_1(t)&=\EE((1-t)u+tv)-\EE(v);\\
f_2(t)&=\operatorname{E}_v((1-t)u+tv).
\end{aligned}
\]
Note that $f_1(1)=f_2(1)=0$, by Proposition~\ref{concave} we get $f_1'(t)=f_2'(t)$ and therefore
\[
\begin{aligned}
&\EE(u)-\EE(v)=f_1(0)=f_1(0)-f_1(1)=-\int_0^1f_1'(t)dt=-\int_0^1f_2'(t)dt=\\
&f_2(0)-f_2(1)=f_2(0)=\frac {1}{m+1}\sum_{j=0}^m\int_{\Omega}(u-v)(dd^cu)^j\wedge(dd^cv)^{m-j}\wedge \be.
\end{aligned}
\]
(2). By Proposition~\ref{concave} the function
\[
f(t)=\EE((1-t)v+tu), \ t\in [0,1]
\]
is concave. Therefore
\begin{multline*}
\int_{\Omega}(u-v)\HH(u)=f'(1)\leq \int_0^1f'(t)dt=f(1)-f(0)=\EE(u)-\EE(v)= \\
\int_0^1f'(t)dt\leq f'(0)=\int_{\Omega}(u-v)\HH(v).
\end{multline*}

(3). It follows from (1) and (2).

(4). It was proved by Lu in~\cite{L3} that if $u^l_j\searrow u^l$, $u_j^l,u^l\in \mathcal E_{1,m}$, $j\in \mathbb N$, $l=0,1,\dots, m$, then
\[
\int_{\Omega}u_j^0(dd^cu_j^1)\wedge\dots\wedge (dd^cu_j^m)\wedge \be\searrow \int_{\Omega}u^0(dd^cu^1)\wedge\dots\wedge (dd^cu^m)\wedge \be.
\]
Therefore, if $u_j\searrow u$, then
\[
\EE(u_j)-\EE(u)=\frac {1}{m+1}\sum_{l=0}^m\int_{\Omega}(u-u_j)(dd^cu)^l\wedge(dd^cu_j)^{m-l}\wedge \be\to 0, \ j\to \infty.
\]

(5). Let $u_j$ be an increasing sequence, and let $u=(\lim_{j\to \infty}u_j)^*$. Then by (3)
\[
0\leq \EE(u)-\EE(u_j)\leq \int_{\Omega} (u-u_j)\HH(u_j)\to 0, \ j\to \infty,
\]
by Proposition~~\ref{increasing}.

 Finally, (6) and (7) follows directly from the definition.
\end{proof}

Now let us recall the Aubin $\II$-functional. For $\psi_1,\psi_2\in \mathcal E_{1,m}$ define
\[\label{Aubin}
\II(\psi_1,\psi_2)=\int_{\Omega}(\psi_1-\psi_2)(\HH(\psi_2)-\HH(\psi_1)).
\]
Integration by part yields
\begin{equation}\label{ii}
\II(\psi_1,\psi_2)=\sum_{j=0}^{m-1}\int_{\Omega}d(\psi_1-\psi_2)\wedge d^c(\psi_1-\psi_2)\wedge(dd^c\psi_2)^{j}\wedge(dd^c\psi_1)^{m-j-1}\wedge \be.
\end{equation}

\begin{remark}
Let $w, u_j\in \mathcal E_{1,m}$, $j=0,\dots,m$ be such that
\[
\max(e_1(u_0),\dots, e_1(u_m),e_1(w))<C,
\]
for some constant $C>0$. Then we have
\begin{equation}\label{est}
\int_{\Omega}|u_0-w|(dd^cu_1)\wedge\dots\wedge(dd^cu_m)\wedge\be\leq \widetilde C,
\end{equation}
where $\tilde C$ depends only on $C$. In particular,
\[
\II(u_j,u)\leq \widetilde C.
\]
\end{remark}

\begin{proposition}\label{est2} Let $1\leq m\leq n$, and let $\Omega$ be a bounded $m$-hyperconvex domain in $\mathbb C^n$, $n>1$. Fix a constant $C>0$, and let $\varphi_1,\varphi_2, \psi_1,\psi_2\in \mathcal E_{1,m}$ be such that $e_1(\varphi_1), e_1(\varphi_2), e_1(\psi_1), e_1(\psi_2)<C$.
\begin{enumerate}

\item Then there exists a constant $D$, depending only on $C$, such that
\[
\left|\int_{\Omega}(\varphi_1-\varphi_2)(\HH(\psi_1)-\HH(\psi_2))\right|\leq D\II(\psi_1,\psi_2)^{\frac 12}.
\]

\item Then there exists a continuous increasing function $f_C:[0,\infty)\to [0,\infty)$, depending only on $C$, with $f_C(0)=0$ and such that
\[
\left|\int_{\Omega}(\varphi_1-\varphi_2)(\HH(\psi_1)-\HH(\psi_2))\right|\leq f_C\left(\II(\varphi_1,\varphi_2)\right).
\]
\end{enumerate}
\end{proposition}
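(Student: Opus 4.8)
The plan is to establish part (1) first by combining a Cauchy--Schwarz type inequality with the bilinear structure of the Hessian operator, and then to derive part (2) from part (1) together with a compactness/continuity argument relating $\II(\psi_1,\psi_2)$ to $\II(\varphi_1,\varphi_2)$ in the other order.

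For part (1), I would start from the identity obtained by integration by parts:
\[
\int_{\Omega}(\varphi_1-\varphi_2)\big(\HH(\psi_1)-\HH(\psi_2)\big)
=\sum_{j=0}^{m-1}\int_{\Omega}d(\varphi_1-\varphi_2)\wedge d^c(\psi_2-\psi_1)\wedge(dd^c\psi_1)^{j}\wedge(dd^c\psi_2)^{m-1-j}\wedge\be .
\]
To each summand I would apply the Cauchy--Schwarz inequality for the positive current $T_j=(dd^c\psi_1)^{j}\wedge(dd^c\psi_2)^{m-1-j}\wedge\be$, namely
\[
\left|\int_{\Omega}d(\varphi_1-\varphi_2)\wedge d^c(\psi_2-\psi_1)\wedge T_j\right|
\leq \left(\int_{\Omega}d(\varphi_1-\varphi_2)\wedge d^c(\varphi_1-\varphi_2)\wedge T_j\right)^{1/2}
\left(\int_{\Omega}d(\psi_1-\psi_2)\wedge d^c(\psi_1-\psi_2)\wedge T_j\right)^{1/2}.
\]
The second factor, summed over $j$, is exactly $\II(\psi_1,\psi_2)$ by \eqref{ii}. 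The first factor is bounded by a constant depending only on $C$: indeed $\int_{\Omega}d(\varphi_1-\varphi_2)\wedge d^c(\varphi_1-\varphi_2)\wedge T_j \le \II(\varphi_1,\varphi_2)$-type terms, which after expanding and using the remark's estimate \eqref{est} (polarization of $e_1$ plus Theorem~\ref{thm_holder}) are all $\le \widetilde C$. Then the elementary inequality $\sum_j a_j^{1/2}b_j^{1/2}\le (\sum_j a_j)^{1/2}(\sum_j b_j)^{1/2}$ (or simply bounding each $a_j^{1/2}$ by a constant and pulling the sum of $b_j^{1/2}$ together via concavity of $\sqrt{\cdot}$) yields the claimed bound $D\,\II(\psi_1,\psi_2)^{1/2}$.

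For part (2), the point is that $\II$ is symmetric in spirit but the two arguments play different roles in the estimate. I would note that the left-hand side is symmetric under swapping the pair $(\varphi_1,\varphi_2)$ with $(\psi_1,\psi_2)$ only up to sign, so applying part (1) with the roles reversed gives
\[
\left|\int_{\Omega}(\varphi_1-\varphi_2)\big(\HH(\psi_1)-\HH(\psi_2)\big)\right|
=\left|\int_{\Omega}(\psi_1-\psi_2)\big(\HH(\varphi_1)-\HH(\varphi_2)\big)\right|
\leq D\,\II(\varphi_1,\varphi_2)^{1/2},
\]
where the first equality is again just integration by parts applied to the symmetric bilinear form $\sum_j \int d(\varphi_1-\varphi_2)\wedge d^c(\psi_1-\psi_2)\wedge(\cdots)$. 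Taking $f_C(t)=D\sqrt{t}$ would then already do it; but to be safe about the mixed mixed-Monge--Amp\`ere terms $(dd^c\varphi_i)^j\wedge(dd^c\varphi_k)^{m-1-j}$ appearing after the reversal, I would instead argue that $\big|\int_{\Omega}(\varphi_1-\varphi_2)(\HH(\psi_1)-\HH(\psi_2))\big|$ depends continuously and monotonically on $\II(\varphi_1,\varphi_2)$ over the compact-in-energy family cut out by the bound $C$: if it did not tend to $0$ as $\II(\varphi_1,\varphi_2)\to 0$, one could extract sequences with $\varphi_1^{(k)}-\varphi_2^{(k)}\to 0$ in the relevant sense (in $L^1$, by the domination principle Proposition~\ref{dp} applied to the vanishing of the gradient energy) while the integral stays bounded away from zero, contradicting the bound just derived. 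Defining $f_C(t)$ as the supremum of the left-hand side over all admissible quadruples with $\II(\varphi_1,\varphi_2)\le t$ gives an increasing function with $f_C(0)=0$; continuity is arranged by replacing it with a continuous majorant (e.g. $t\mapsto \inf_{s>0}(f_C(t+s)+s)$ or simply $f_C(t)+t$, relabeled).

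The main obstacle I anticipate is part (2): unlike $\II(\psi_1,\psi_2)$, which appears directly in the Cauchy--Schwarz bound from \eqref{ii}, the quantity $\II(\varphi_1,\varphi_2)$ controls the "wrong" gradient energy, so one cannot get a clean power bound like $D\sqrt{t}$ for free — the coefficients $(dd^c\psi_i)^j$ remain in the first Cauchy--Schwarz factor and one only knows they are bounded, not small. Hence the abstract compactness argument (using monotone convergence / the domination principle to show that smallness of $\II(\varphi_1,\varphi_2)$ forces $\varphi_1-\varphi_2\to0$ strongly enough for the currents $\HH(\psi_1)-\HH(\psi_2)$ — which are uniformly bounded in mass by \eqref{est} — to integrate it to something small) seems unavoidable, and getting a uniform modulus $f_C$ depending only on $C$ will require care that the extracted subsequences stay within the energy-$C$ class.
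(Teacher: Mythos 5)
Part (1) of your proposal is essentially the paper's argument: a telescoping sum over the mixed currents $(dd^c\psi_1)^j\wedge(dd^c\psi_2)^{m-1-j}\wedge\be$, Cauchy--Schwarz on each term, with the $\psi$-gradient factor summing to $\II(\psi_1,\psi_2)$ and the $\varphi$-gradient factor bounded by a constant depending only on $C$ via integration by parts and Theorem~\ref{thm_holder}. That part is fine.

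Part (2) has a genuine gap. First, the identity
\[
\int_{\Omega}(\varphi_1-\varphi_2)(\HH(\psi_1)-\HH(\psi_2))=\int_{\Omega}(\psi_1-\psi_2)(\HH(\varphi_1)-\HH(\varphi_2))
\]
is false for $m\ge 2$: integrating by parts twice moves $dd^c$ from $\psi_1-\psi_2$ onto $\varphi_1-\varphi_2$ but leaves the coefficient current $\sum_j(dd^c\psi_1)^j\wedge(dd^c\psi_2)^{m-1-j}\wedge\be$ in place, whereas the right-hand side carries $\sum_j(dd^c\varphi_1)^j\wedge(dd^c\varphi_2)^{m-1-j}\wedge\be$. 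You flag this yourself, but your fallback --- defining $f_C(t)$ as a supremum and arguing by contradiction via compactness --- does not close the gap. Extracting a subsequence with $\II(\varphi_1^{(k)},\varphi_2^{(k)})\to 0$ gives at best $L^1_{loc}$-convergence within the energy-$C$ class, and the functional $(\varphi_1,\varphi_2,\psi_1,\psi_2)\mapsto\int_{\Omega}(\varphi_1-\varphi_2)(\HH(\psi_1)-\HH(\psi_2))$ is not continuous for that topology: Hessian measures do not converge weakly under $L^1$-convergence, and even if they did, one cannot pass to the limit in the pairing of a weakly convergent measure with a merely $L^1$-convergent integrand. Moreover, for $m\ge 2$ the smallness of $\II(\varphi_1,\varphi_2)$, whose density currents are powers of the possibly degenerate $dd^c\varphi_i$, does not obviously force $\varphi_1-\varphi_2\to0$ in $L^1$; the domination principle is not applicable to that end. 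The paper's proof is instead quantitative: with $u=\varphi_1-\varphi_2$ and $v=\tfrac12(\varphi_1+\varphi_2)$ it introduces $\zeta_j=\int_{\Omega}du\wedge d^cu\wedge(dd^cv)^j\wedge(dd^c\psi)^{m-j-1}\wedge\be$, proves the backward recursion $\zeta_j\le\zeta_{j+1}+C_3\zeta_{j+1}^{1/2}$ anchored at $\zeta_{m-1}\le D\,\II(\varphi_1,\varphi_2)$, and thereby propagates the smallness of $\II(\varphi_1,\varphi_2)$ through all the mixed gradient energies, yielding an explicit modulus $g_C$; a three-term triangle decomposition through $\HH(\varphi_1)$ and $\HH(\varphi_2)$ then gives $f_C(t)=t+2g_C(t)$. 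That iteration is the missing idea.
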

\begin{proof} (1). For each $j=0,\dots, m$ and $\varphi\in \{\varphi_1,\varphi_2\}$ define
\[
\gamma_j=\int_{\Omega}\varphi(dd^c\psi_1)^j\wedge(dd^c\psi_2)^{m-j}\wedge \be.
\]
It is enough to prove that there exists a constant $C_1$, depending only on $C$, such that
\[
|\gamma_m-\gamma_0|\leq C_1 \II(\psi_1,\psi_2)^{\frac 12}.
\]
First observe that
\begin{multline*}
\gamma_{j+1}-\gamma_j=\int_{\Omega}\varphi dd^c(\psi_1-\psi_2)\wedge(dd^c\psi_1)^{j}\wedge(dd^c \psi_2)^{m-j-1}\wedge \be=\\
-\int_{\Omega}d\varphi\wedge d^c(\psi_1-\psi_2)\wedge(dd^c\psi_1)^{j}\wedge(dd^c \psi_2)^{m-j-1}\wedge \be.
\end{multline*}
From the Cauchy-Schwarz inequality it follows
\[
\left|\gamma_{j+1}-\gamma_{j}\right|^2\leq A_jB_j,
\]
where
\[
A_j=\int_{\Omega}d\varphi\wedge d^c\varphi\wedge(dd^c\psi_1)^{j}\wedge(dd^c \psi_2)^{m-j-1}\wedge \be
\]
and
\[
B_j=\int_{\Omega}d(\psi_1-\psi_2)\wedge d^c(\psi_1-\psi_2)\wedge(dd^c\psi_1)^{j}\wedge(dd^c \psi_2)^{m-j-1}\wedge \be \leq \II(\psi_1,\psi_2),
\]
and the last inequality follows from (\ref{ii}). Integration by part, and (\ref{est}), gives us
\[
A_j=-\int_{\Omega}\varphi(dd^c\varphi)\wedge(dd^c\psi_1)^{j}\wedge(dd^c \psi_2)^{m-j-1}\wedge \be\leq \tilde C,
\]
where $\tilde C$ depends only on $C$. Finally, we obtain
\[
|\gamma_m-\gamma_0|\leq |\gamma_m-\gamma_{m-1}|+\dots+|\gamma_1-\gamma_0|\leq C_1  \II(\psi_1,\psi_2)^{\frac 12},
\]
where $C_1$ depends only on $C$.

 (2). Fix $\varphi_1,\varphi_2, \psi\in \mathcal E_{1,m}$ be such that $e_1(\varphi_1), e_1(\varphi_2), e_1(\psi)<C$ and let $u=\varphi_1-\varphi_2$ and $v=\frac 12(\varphi_1+\varphi_2)$. Now define
\[
\begin{aligned}
&\alpha_j=\int_{\Omega}u(dd^c\varphi_1)^j\wedge (dd^c\psi)^{m-j}\wedge \be, \ \  j\in \{0,\dots,m\}\\
&\zeta_j=\int_{\Omega}du\wedge d^cu\wedge (dd^c v)^j\wedge (dd^c\psi)^{m-j-1}\wedge \be, \ \  j\in \{0,\dots,m-1\}.
\end{aligned}
\]
Integrating by parts gives us
\[
\begin{aligned}
&\alpha_j=\alpha_{j+1}+\int_{\Omega}u(dd^c(\psi-\varphi_1))\wedge(dd^c\varphi_1)^j\wedge (dd^c\psi)^{m-j-1}\wedge \be=\\
&\alpha_{j+1}-\int_{\Omega}du\wedge d^c(\psi-\varphi_1)\wedge(dd^c\varphi_1)^j\wedge (dd^c\psi)^{m-j-1}\wedge \be.
\end{aligned}
\]
Last integral can be estimated using the Cauchy-Schwarz inequality
\begin{multline*}
\left(\int_{\Omega}du\wedge d^c(\psi-\varphi_1)\wedge(dd^c\varphi_1)^j\wedge (dd^c\psi)^{m-j-1}\wedge \be\right)^2\leq \\
\left(\int_{\Omega}du\wedge d^cu\wedge(dd^c\varphi_1)^j\wedge (dd^c\psi)^{m-j-1}\wedge \be\right)\\
\left(\int_{\Omega}d(\psi-\varphi_1)\wedge d^c(\psi-\varphi_1)\wedge(dd^c\varphi_1)^j\wedge (dd^c\psi)^{m-j-1}\wedge \be\right)\leq
2^j\zeta_j \II(\varphi_1,\psi).
\end{multline*}
By (\ref{est}), $\II(\varphi_1,\psi)$ is bounded by a constant, depending only on $C$, and therefore
\[
|\alpha_j-\alpha_{j+1}|\leq C_1\zeta_j^{\frac 12}.
\]
Then we get
\begin{equation}\label{sum}
\left|\int_{\Omega}(\varphi_1-\varphi_2)(\HH(\varphi_1)-\HH(\psi))\right|\leq C_2\sum_{j=0}^{m-1}\zeta_j^{\frac 12},
\end{equation}
where constant $C_2$ depends only on $C$. Next, we shall estimate the constants $\zeta_j$. Again, using integration by parts
\begin{multline*}
\zeta_j=\int_{\Omega}du\wedge d^cu\wedge (dd^c v)^{j+1}\wedge (dd^c\psi)^{m-j-2}\wedge \be+\\
\int_{\Omega}du\wedge d^cu\wedge dd^c(\psi-v)\wedge(dd^c v)^j\wedge (dd^c\psi)^{m-j-2}\wedge \be=\\
\zeta_{j+1}-\int_{\Omega}du\wedge d^c(\psi-v)\wedge dd^cu\wedge (dd^c v)^j\wedge (dd^c\psi)^{m-j-2}\wedge \be.\\
\end{multline*}
Moreover by Cauchy-Schwarz inequality, and (\ref{ii}),
\begin{multline*}
\left|\int_{\Omega}du\wedge d^c(\psi-v)\wedge dd^cu\wedge (dd^c v)^j\wedge (dd^c\psi)^{m-j-2}\wedge \be\right|\leq \\
2\left|\int_{\Omega}du\wedge d^c(\psi-v)\wedge (dd^c v)^{j+1}\wedge (dd^c\psi)^{m-j-2}\wedge \be\right|\\
+2\left|-\int_{\Omega}du\wedge d^c(\psi-v)\wedge dd^c \varphi_2\wedge (dd^c v)^{j}\wedge (dd^c\psi)^{m-j-2}\wedge \be\right|\leq \\
2\zeta_{j+1}^{\frac 12}\II(\psi,v)^{\frac 12}+
2\left(\int_{\Omega}du\wedge d^cu\wedge dd^c \varphi_2\wedge (dd^c v)^{j}\wedge (dd^c\psi)^{m-j-2}\wedge \be\right)^{\frac 12}\times\\
\left(\int_{\Omega}d(\psi-v)\wedge d^c(\psi-v)\wedge dd^c \varphi_2\wedge (dd^c v)^{j}\wedge (dd^c\psi)^{m-j-2}\wedge \be\right)^{\frac 12}\\
2\zeta_{j+1}^{\frac 12}\II(\psi,v)^{\frac 12}+2\left(2\zeta_{j+1}\right)^{\frac 12}\left(2\II(\psi,v)\right)^{\frac 12}.
\end{multline*}
Therefore there exists constant $C_3$, depending only on $C$, such that
\[
\zeta_j\leq \zeta_{j+1}+C_3\zeta_{j+1}^{\frac 12}.
\]
Observe also that
\[
\zeta_{m-1}=\int_{\Omega}du\wedge d^cu\wedge (dd^c v)^{m-1}\wedge \be \leq D\II(\varphi_1,\varphi_2),
\]
where $D$ is a constant depending only on $m$. Now note that there exists a continuous increasing function $g_C:[0,\infty)\to [0,\infty)$ depending only on $C$ with $g_C(0)=0$ and such that
\[
\sum_{j=0}^{m-1}\zeta_j^{\frac 12}\leq g_C\left(\II(\varphi_1,\varphi_2)\right)
\]
and therefore by (\ref{sum})
\[
\left|\int_{\Omega}(\varphi_1-\varphi_2)(\HH(\varphi_1)-\HH(\psi))\right|\leq g_C\left(\II(\varphi_1,\varphi_2)\right).
\]
Finally,
\begin{multline*}
\left|\int_{\Omega}(\varphi_1-\varphi_2)(\HH(\psi_1)-\HH(\psi_2))\right|\leq \left|\int_{\Omega}(\varphi_1-\varphi_2)(\HH(\psi_1)-\HH(\varphi_1))\right|+\\
\left|\int_{\Omega}(\varphi_1-\varphi_2)(\HH(\varphi_1)-\HH(\varphi_2))\right|+\left|\int_{\Omega}(\varphi_1-\varphi_2)(\HH(\varphi_2)-\HH(\psi_2))\right|\leq\\
2g_C\left(\II(\varphi_1,\varphi_2)\right)+ \II(\varphi_1,\varphi_2)= f_C\left(\II(\varphi_1,\varphi_2)\right),
\end{multline*}
where $f_C(t)=t+2g_C(t)$.
\end{proof}

\section{Minimum principle}

First we need to prove the existence result for the complex Hessian type equation.

\medskip

\begin{theorem}\label{hte}Let $1\leq m\leq n$, and let $\Omega$ be a bounded $m$-hyperconvex domain in $\mathbb C^n$, $n>1$. Also, let $\mu=\HH(\varphi)$ for some $\varphi\in \mathcal E_{0,m}$. Assume also that $F(x,z)\geq 0$ is a $dx\times
d\mu $- measurable function on $(-\infty ,0]\times \Omega$ such that
\begin{enumerate}\itemsep2mm
\item for all $z\in \Omega$, the function $x\to F(x,z)$ is continuous and nondecreasing;

\item for all $x\leq 0$, the function $z\to F(x,z)$ is bounded.
\end{enumerate}
Then there exists a unique function $u\in  \mathcal E_{0,m}$  that satisfies the following  complex Hessian type equation
\[
\HH(u)=F(u(z),z)\, d\mu .
\]
\end{theorem}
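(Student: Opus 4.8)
The plan is to prove uniqueness by an energy--comparison argument and existence by the Schauder fixed point theorem applied to the solution operator of the complex Hessian Dirichlet problem.

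\emph{Uniqueness.} Suppose $u,v\in\E_{0,m}$ both solve the equation. Since $\E_{0,m}\subset\E_{1,m}$, the integration by parts identity~(\ref{ii}) applies and $\II(u,v)$ is finite. Inserting $\HH(u)=F(u(z),z)\,d\mu$ and $\HH(v)=F(v(z),z)\,d\mu$,
\[
\II(u,v)=\int_{\Omega}(u-v)\bigl(\HH(v)-\HH(u)\bigr)=\int_{\Omega}\bigl(u(z)-v(z)\bigr)\bigl(F(v(z),z)-F(u(z),z)\bigr)\,d\mu(z).
\]
Because $x\mapsto F(x,z)$ is nondecreasing, the integrand is $\leq 0$ at every point, so $\II(u,v)\leq 0$; on the other hand~(\ref{ii}) writes $\II(u,v)$ as a sum of nonnegative integrals, so $\II(u,v)\geq 0$. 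Hence $\II(u,v)=0$, which forces $\bigl(u(z)-v(z)\bigr)\bigl(F(v(z),z)-F(u(z),z)\bigr)=0$ for $\mu$-a.e.\ $z$, and therefore $F(u(z),z)=F(v(z),z)$ for $\mu$-a.e.\ $z$. Thus $\HH(u)=\HH(v)$, and the second assertion of the comparison-principle theorem, applied with the roles of $u$ and $v$ interchanged, gives $u=v$.

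\emph{Existence, the fixed point setup.} Let $M=\sup_{z\in\Omega}F(0,z)$, finite by hypothesis~(2); if $M=0$ then $F\equiv 0$ and $u\equiv 0$ is the solution, so assume $M>0$. For any negative $m$-subharmonic function $u$ one has $0\leq F(u(z),z)\leq F(0,z)\leq M$ pointwise. Set $\psi_0=M^{1/m}\varphi\in\E_{0,m}$, so $\HH(\psi_0)=M\,d\mu$, and put $\K=\bigl\{u\in\E_{0,m}:\psi_0\leq u\leq 0\bigr\}$. Then $\K$ is convex and, being a uniformly bounded family of $m$-subharmonic functions, compact in $L^1(\Omega)$. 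For $u\in\K$ the measure $F(u(\cdot),\cdot)\,d\mu$ has bounded density with respect to $\mu=\HH(\varphi)$, so by the solvability of the complex Hessian Dirichlet problem for measures with bounded density with respect to a Hessian measure (see~\cite{L,L3,thien}) there is a unique $\T(u)\in\E_{0,m}$ with $\HH(\T(u))=F(u(z),z)\,d\mu$, uniqueness being by the comparison principle. Since $\HH(0)=0\leq\HH(\T(u))\leq M\,d\mu=\HH(\psi_0)$, the comparison principle yields $\psi_0\leq\T(u)\leq 0$, so $\T\colon\K\to\K$. Note $\T$ is order-reversing: $u_1\leq u_2$ gives $F(u_1,\cdot)\leq F(u_2,\cdot)$, hence $\HH(\T(u_1))\leq\HH(\T(u_2))$ and $\T(u_1)\geq\T(u_2)$; consequently a direct monotone iteration only produces a coupled pair $\underline u\leq\overline u$ with $\HH(\underline u)=F(\overline u,\cdot)\,d\mu$ and $\HH(\overline u)=F(\underline u,\cdot)\,d\mu$, which need not agree, so a genuine fixed point theorem is needed.

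\emph{Existence, continuity of $\T$ and conclusion.} It remains to show $\T$ is continuous for the $L^1$-topology; the Schauder fixed point theorem then yields $u\in\K$ with $u=\T(u)$, i.e.\ $\HH(u)=F(u(z),z)\,d\mu$. Let $u_j\to u$ in $L^1(\Omega)$ with $u_j\in\K$. Being a uniformly bounded family of $m$-subharmonic functions, along a subsequence the $u_j$ converge to $u$ in the capacity $\operatorname{cap}_m$, hence quasi-everywhere, hence $\mu$-a.e., as $\mu=\HH(\varphi)$ puts no mass on $m$-polar sets. By continuity of $x\mapsto F(x,z)$ and dominated convergence ($0\leq F(u_j,\cdot)\leq M$, $\mu(\Omega)<\infty$) we get $F(u_j(\cdot),\cdot)\,d\mu\to F(u(\cdot),\cdot)\,d\mu$. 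Invoking the stability of the Hessian Dirichlet problem under convergence of the right-hand side, together with the uniqueness proved above, $\T(u_j)\to\T(u)$ in $L^1$; since every subsequence has a further subsequence along which this holds, $\T$ is continuous, and the proof is complete. The delicate step, and the main obstacle, is exactly this continuity of $\T$: it rests on the stability of the complex Hessian Dirichlet problem under convergence of the density, on the passage from $L^1$-convergence of a uniformly bounded family of $m$-subharmonic functions to convergence in $\operatorname{cap}_m$ along a subsequence, and on the absence of $\mu$-mass on $m$-polar sets, which promotes quasi-everywhere convergence to $\mu$-a.e.\ convergence so that $F(u_j,\cdot)\to F(u,\cdot)$ $\mu$-a.e. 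One should also check at the outset that $z\mapsto F(u(z),z)$ is $d\mu$-measurable for $u\in\E_{0,m}$, where the joint measurability of $F$ and the quasi-continuity of $m$-subharmonic functions are used.
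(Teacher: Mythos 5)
Your uniqueness argument is correct and takes a genuinely different route from the paper's. You integrate the equation against $u-v$ and play the sign of $(u-v)\bigl(F(v,\cdot)-F(u,\cdot)\bigr)$ off against the positivity of $\II(u,v)$ coming from the integration-by-parts identity~(\ref{ii}); the paper instead applies the comparison principle on the sublevel sets $\{u<v\}$ and $\{u>v\}$ separately. Both arguments reduce to showing $\HH(u)=\HH(v)$ and then invoking part (2) of the comparison principle, and both use only the monotonicity of $x\mapsto F(x,z)$; yours is arguably slicker, while the paper's avoids any appeal to energy pairings. The existence part follows the same overall strategy as the paper (a compact convex set $\K\subset\E_{0,m}$, the solution operator $\T$, Schauder--Tychonoff), with only a cosmetic difference in the choice of $\K$ (you squeeze between $M^{1/m}\varphi$ and $0$; the paper uses $\{u\geq\psi\}$ with $\HH(\psi)=F(0,\cdot)\,d\mu$).

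There is, however, a genuine gap in your continuity argument for $\T$, and it sits exactly at the step you yourself flag as delicate. You claim that a uniformly bounded sequence of $m$-subharmonic functions converging in $L^1(\Omega)$ converges in $\operatorname{cap}_m$ along a subsequence. This is false already for $m=n$: by Cegrell's classical example on the discontinuity of the complex Monge--Amp\`ere operator, there are uniformly bounded plurisubharmonic $u_j\to u$ in $L^1_{loc}$ with $(dd^cu_j)^n\not\to(dd^cu)^n$ weakly. If every subsequence had a sub-subsequence converging in capacity to $u$, then (by the standard convergence theorem for the Hessian operator under capacity convergence of uniformly bounded functions) the full sequence of Hessian measures would converge weakly to $\HH(u)$, a contradiction. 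So the subsequential capacity convergence, and with it your route to $\mu$-a.e.\ convergence, does not follow from $L^1$ convergence and uniform boundedness alone.

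The conclusion you need is nevertheless true, but it must be reached differently, and this is what the paper does: since $\mu=\HH(\varphi)$ with $\varphi\in\E_{0,m}$ bounded and $\sup_j\int_\Omega(-u_j)^2\,d\mu<\infty$, the convergence $u_j\to u$ in $L^1(\Omega,dV)$ upgrades to convergence in $L^1(d\mu)$ by the lemma in~\cite{L3} (a Cegrell-type argument exploiting that $\mu$ is dominated by a Hessian measure of a bounded function, not a capacity argument). From $L^1(d\mu)$ convergence you get $\mu$-a.e.\ convergence along a subsequence, then $F(u_j,\cdot)\to F(u,\cdot)$ in $L^1(d\mu)$ by dominated convergence, and the stability theorem of~\cite{thien2} gives $\T(u_j)\to\T(u)$. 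With that single substitution the rest of your existence proof goes through as written.
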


\begin{proof} Since
\[
F(0,\cdot)\,d\mu\leq C\,d\mu=C\HH(\varphi),
\]
we can by~\cite{Coung} guarantee the existence of  a unique function $\psi\in \mathcal E_{0,m}$ such that $\HH(\psi)=F(0,\cdot)\,d\mu$. Set
\[
\mathcal K=\{u\in \mathcal E_{0,m}: u\geq \psi\}\, .
\]
The set $\mathcal K$ is convex, and compact in the $L^1_{loc}$-topology. Let us define a map
$\mathcal T : \mathcal K\to \mathcal K$ so that if
\[
\HH(v) =F(u(z),z)\, d\mu \, , \text{ then } \mathcal T(u)=v\, .
\]
Note that if $u\in \mathcal K$, then $F(u(z),z)\, d\mu\leq \HH(\psi)$. By~\cite{thien2} there exists a uniquely determined function  $v\in \mathcal E_{0,m}$ such that $\HH(v)=F(u(z),z)\, d\mu$, and by the comparison principle we have that $v\geq \psi$. Thus, $v\in \mathcal K$, i.e. $\mathcal T$ is well-defined.

We continue with proving that $\mathcal T$ is continuous, and then the Schauder-Tychonoff fixed point theorem concludes the existence part of this proof. Assume that $u_j \in \mathcal K$ with $u_j\to u\in \mathcal K$. By~\cite{L3} (since $\sup_{j}\int_{\Omega}(-u_j)^2d\mu<\infty$) we conclude that $u_j$ is converging to $u$ in $L^1(d\mu)$. By the stability theorem in~\cite{thien2} the sequence $v_j=\mathcal T(u_j)$ converges in capacity to some $v\in \mathcal K$. Since $v_j,v \in \mathcal K$ we  get that $\HH(v_j)$ tends to $\HH(v)$ in the weak$^*$-topology. Hence,
\[
\HH(v)=\lim_{j\to \infty}\HH(v_j)=\lim_{j\to \infty}F(u_j(z),z)\, d\mu=F(u(z),z)\, d\mu=\HH(\mathcal  T(u)),
\]
which implies that $v=\mathcal T(u)$ by the comparison principle. Thus,  $\lim_{j\to \infty}\mathcal T(u_j)=\mathcal T(u)$, i.e. $\mathcal T$ is continuous.

We now proceed with the uniqueness part. Assume that $F$ is a function that is nondecreasing in the first variable, and assume that there exist functions $u,v\in \mathcal E_{0,m}$ such that
\[
\HH(u)=F(u(z),z)\, d\mu \text{ and }\HH(v)=F(v(z),z)\, d\mu \, .
\]
On the set $\{z\in \Omega:u(z)<v(z)\}$ we have that
\[
\HH(u)=F(u(z),z)\, d\mu\leq F(v(z),z)\, d\mu=\HH(v)\, .
\]
By the comparison principle
\[
\int_{\{u<v\}}\HH(v)\leq \int_{\{u<v\}}\HH(u)\, ,
\]
hence $\HH(u)=\HH(v)$ on $\{z\in \Omega:u(z)<v(z)\}$. In a similar manner, we get that $\HH(u)=\HH(v)$ on $\{z\in \Omega:u(z)>v(z)\}$. Furthermore, on $\{u=v\}$ we have that
\[
\HH(u)=F(u(z),z)\, d\mu=F(v(z),z)\, d\mu=\HH(v)\, .
\]
Hence, $\HH(u)=\HH(v)$ on $\Omega$. Thus $u=v$.
\end{proof}

\begin{remark}
For our purpose the generality of Theorem~\ref{hte} is satisfactory. This result can be extended to more general class of measures. Similar results for the complex Monge-Amp\`ere operator was proved in~\cite{B,C,H}.
\end{remark}

\begin{definition}
For $u_1,\dots,u_k\in \mathcal E_{1,m}$ define
\[
\PP(u_1,\dots,u_k)=\Big(\sup\{\varphi\in \mathcal E_{1,m}: \varphi\leq \min(u_1,\dots,u_k)\}\Big)^*,
\]
where  $(\,)^*$ is the upper semicontinuous regularization.
\end{definition}

\begin{remark}
Note that if $u,v\in \mathcal E_{1,m}$, then $u+v\leq \PP(u,v)$ and therefore $P(u,v)\in \mathcal E_{1,m}$.
\end{remark}

\begin{theorem}\label{mp} Let $1\leq m\leq n$, and let $\Omega$ be a bounded $m$-hyperconvex domain in $\mathbb C^n$, $n>1$. Let $u,v\in \mathcal E_{1,m}$. Then the following minimum principle holds
\begin{equation}\label{min}
\HH(\PP(u,v))\leq \chi_{\{\PP(u,v)=u\}}\HH(u)+\chi_{\{\PP(u,v)=v\}}\HH(v).
\end{equation}
\end{theorem}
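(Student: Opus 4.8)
The plan is to realize $\PP(u,v)$ as a limit of solutions of penalized complex Hessian equations of the type solved in Theorem~\ref{hte}, the right-hand side being split according to the masses of $\HH(u)$ and $\HH(v)$, so that~\eqref{min} is obtained in the limit. First I would reduce to $u,v\in\E_{0,m}$: choosing $\E_{0,m}\ni u_j\searrow u$ and $\E_{0,m}\ni v_j\searrow v$ one has $\PP(u_j,v_j)\searrow\PP(u,v)$ and $\HH(\PP(u_j,v_j))\to\HH(\PP(u,v))$ weakly, and, using that two bounded $m$-subharmonic functions agreeing on a Borel set carry equal Hessian mass there, the estimate~\eqref{min} for the pairs $(u_j,v_j)$ passes to the limit. (This reduction needs some care with the contact sets, but is routine.)

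So assume $u,v\in\E_{0,m}$ and put $\mu=\HH(u)+\HH(v)$. Since the mixed Hessian terms are nonnegative, $\mu\leq\HH(u+v)$ with $u+v\in\E_{0,m}$, so by~\cite{thien2} there is $\varphi_0\in\E_{0,m}$ with $\HH(\varphi_0)=\mu$; write $\HH(u)=g_1\,d\mu$ and $\HH(v)=g_2\,d\mu$ with $0\leq g_1,g_2\leq1$ and $g_1+g_2=1$ $\mu$-almost everywhere. For $\varepsilon>0$ fix a continuous nondecreasing $\lambda_\varepsilon\colon\RE\to[0,1]$ that vanishes on $(-\infty,-\varepsilon]$ and equals $1$ on $[0,\infty)$, and apply Theorem~\ref{hte} to $\mu$ and
\[
F_\varepsilon(x,z)=g_1(z)\,\lambda_\varepsilon\big(x-u(z)\big)+g_2(z)\,\lambda_\varepsilon\big(x-v(z)\big)\, ,
\]
which is admissible ($F_\varepsilon\geq0$, continuous and nondecreasing in $x$, bounded in $z$). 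This gives a unique $w_\varepsilon\in\E_{0,m}$ with $\HH(w_\varepsilon)=F_\varepsilon(w_\varepsilon(z),z)\,d\mu$. Since $\HH(w_\varepsilon)\leq d\mu=\HH(\varphi_0)$, the comparison principle yields $\varphi_0\leq w_\varepsilon\leq0$, so $\{w_\varepsilon\}$ is uniformly bounded with uniformly bounded energy; and as $\lambda_\varepsilon\leq\chi_{[-\varepsilon,\infty)}$,
\[
\HH(w_\varepsilon)\;\leq\;\chi_{\{w_\varepsilon\geq u-\varepsilon\}}\,\HH(u)+\chi_{\{w_\varepsilon\geq v-\varepsilon\}}\,\HH(v)\, .
\]

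Now let $\varepsilon\to0$. On $\{w_\varepsilon>u\}$ one has $\HH(w_\varepsilon)\geq\HH(u)$; with the comparison principle this forces $\HH(w_\varepsilon)=\HH(u)$ on $\{u<w_\varepsilon\}$, and combining the analogous statement for $v$ with $g_1+g_2=1$ and the domination principle (Proposition~\ref{dp}) one gets $w_\varepsilon\leq\min(u,v)$. By the uniform energy bound a subsequence of $w_\varepsilon$ converges in $L^1(\Omega)$ and in $\operatorname{cap}_m$ to some $w_0\in\E_{0,m}$ with $w_0\leq\min(u,v)$, and $\HH(w_\varepsilon)\to\HH(w_0)$ weakly; since $\HH(w_\varepsilon)=0$ off $\{w_\varepsilon\geq u-\varepsilon\}\cup\{w_\varepsilon\geq v-\varepsilon\}$, the limit $w_0$ is maximal off its contact set with $\min(u,v)$, which together with $w_0\leq\min(u,v)$ identifies $w_0=\PP(u,v)$. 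Passing the last displayed estimate to the limit — using the weak convergence of $\HH(w_\varepsilon)$, reverse Fatou, and that $\HH(u),\HH(v)$ are absolutely continuous with respect to $\operatorname{cap}_m$, so that $\chi_{\{w_\varepsilon\geq u-\varepsilon\}}$ and $\chi_{\{w_\varepsilon\geq v-\varepsilon\}}$ converge upper-semicontinuously to $\chi_{\{w_0=u\}}$ and $\chi_{\{w_0=v\}}$ — yields~\eqref{min}.

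The step I expect to be the main obstacle is exactly this passage to the limit $\varepsilon\to0$: the ``true'' right-hand side corresponds to the discontinuous weight $g_1(z)\chi_{\{x\geq u(z)\}}+g_2(z)\chi_{\{x\geq v(z)\}}$, which Theorem~\ref{hte} cannot handle, so the regularization is unavoidable, and the delicate points are that $w_\varepsilon$ stays below $\min(u,v)$, that $w_\varepsilon\to\PP(u,v)$ in capacity, and that the contact-set indicators converge in the reverse-Fatou sense — all of which rely on the comparison and domination principles, the uniform energy bounds, and the absolute continuity of $\HH(u),\HH(v)$ with respect to $\operatorname{cap}_m$. (Alternatively, one could quote from the classical theory of envelopes that $\HH(\PP(u,v))$ is carried by the contact set $\{\PP(u,v)=u\}\cup\{\PP(u,v)=v\}$, whereupon~\eqref{min} reduces to the single fact that $m$-subharmonic functions agreeing on a Borel set carry equal Hessian mass there.)
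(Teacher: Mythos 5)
Your overall strategy is the same as the paper's: realize $\PP(u,v)$ as the increasing limit of solutions of penalized Hessian-type equations furnished by Theorem~\ref{hte}, use the supersolution/comparison machinery to keep those solutions below $\min(u,v)$, identify the limit with the envelope, and pass the penalized inequality to the limit. Your bounded case is essentially sound: the decomposition $\HH(u)=g_1\,d\mu$, $\HH(v)=g_2\,d\mu$ with $\mu=\HH(u)+\HH(v)\leq\HH(u+v)$ and the continuous cutoffs $\lambda_\varepsilon$ are a legitimate substitute for the paper's exponential weights $e^{j(\varphi-u)}$; the monotonicity $w_{\varepsilon'}\geq w_\varepsilon$ for $\varepsilon'<\varepsilon$, the identification of the limit via the domination principle, and the final passage (pointwise $\limsup_\varepsilon\chi_{\{w_\varepsilon\geq u-\varepsilon\}}\leq\chi_{\{w_0=u\}}$ plus reverse Fatou against the \emph{fixed} finite measures $\HH(u),\HH(v)$, combined with weak convergence of $\HH(w_\varepsilon)$) all go through. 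Two local repairs: to get $w_\varepsilon\leq\min(u,v)$ you should argue as in the paper's Claim~1 ($u$ is a supersolution, the comparison principle forces $\HH(w_\varepsilon)=\HH(u)$ on $\{u<w_\varepsilon\}$, then Proposition~\ref{dp2} applied to $\max(u,w_\varepsilon)$ and the comparison principle give $w_\varepsilon\leq u$); Proposition~\ref{dp} as you invoke it does not apply, since you know $\HH(w_\varepsilon)=\HH(u)$ on $\{u<w_\varepsilon\}$, not that this set is $\HH(w_\varepsilon)$-null. Also, the appeal to absolute continuity of $\HH(u)$ with respect to $\operatorname{cap}_m$ is unnecessary for the reverse-Fatou step.

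The genuine gap is the reduction to $u,v\in\mathcal E_{0,m}$, which you declare routine. For arbitrary decreasing approximants $u_j\searrow u$ you must pass
\[
\HH(\PP(u_j,v_j))\leq \chi_{\{\PP(u_j,v_j)=u_j\}}\HH(u_j)+\chi_{\{\PP(u_j,v_j)=v_j\}}\HH(v_j)
\]
to the limit with \emph{both} the contact sets and the measures varying. Upper semicontinuity of $\mu_j(E_j)$ under weak convergence holds for closed sets, but contact sets of envelopes are not closed, and reverse Fatou requires a fixed dominating measure; the fact that two bounded $m$-subharmonic functions agreeing on a Borel set carry equal Hessian mass there does not bridge this. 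This is exactly the point the paper's proof is engineered around: it chooses the approximants from the Cegrell--Lebesgue decomposition, $\HH(u^k)=\min(f,k)\HH(\alpha)\leq\HH(u)$, so that the penalized right-hand side is dominated by $e^{T(\varphi_j^k-u)}\HH(u)+e^{T(\varphi_j^k-v)}\HH(v)$ with the \emph{limit} measures fixed, and it keeps the quasi-continuous weight $e^{T(\cdot)}$ alive through the limits $j\to\infty$, $k\to\infty$, only sending $T\to\infty$ at the very end to recover the indicators by monotone convergence. Your argument can be salvaged by adopting the same choice of approximants (so that $\chi_{\{\PP(u_j,v_j)=u_j\}}\HH(u_j)\leq\chi_{\{\PP(u_j,v_j)=u_j\}}\HH(u)$ and reverse Fatou applies against $\HH(u)$), but as written the reduction is a missing idea, not a routine step.
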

\begin{proof}
Let $u,v\in \mathcal E_{1,m}$. From the Cegrell-Lebesgue decomposition theorem (see~\cite{cegrell_pc, L3}) it follows that there exist $\alpha,\gamma\in \mathcal E_{0,m}$, $f,g\geq 0$, $f\in L^1_{loc}(\HH(\alpha))$, $g\in L^1_{loc}(\HH(\gamma))$ such that
\[
\HH(u)=f\HH(\alpha), \quad \text{ and } \quad \HH(v)=g\HH(\gamma).
\]
For $k\in \mathbb N$ let $u^k, v^k\in \mathcal E_{0,m}$ be the unique solutions to the following Dirichlet problems
\[
\HH(u^k)=\min(f,k)\HH(\alpha), \ \ \HH(v^k)=\min(g,k)\HH(\gamma).
\]
Furthermore, the sequences $u^k$ and $v^k$ are decreasing, and converging to $u$, and $v$, respectively.

By Theorem~\ref{hte} it follows that for any $j,k\in \mathbb N$ there exists a unique solution $\varphi_j^k\in\mathcal E_{0,m}$ to the following complex Hessian type equation:
\begin{equation}\label{4}
\HH(\varphi_j^k)=e^{j(\varphi_j^k-u^k)}\HH(u^k)+e^{j(\varphi_j^k-v^k)}\HH(v^k)=F_j(u^k_j)d\mu_{k, j},
\end{equation}
where $F_j(s)=e^{js}$ and $\mu_{k,j}=e^{-ju^k}\HH(u^k)+e^{-jv^k}\HH(v^k)$. We shall denote this equation by HTE(k,j). Let
\[
\psi^k=\PP(u^k,v^k) \quad \text { and } \quad \psi=\PP(u,v).
\]

\medskip

\emph{Claim 1}. The functions $u^k$ and $v^k$ are supersolutions to HTE(k,j). Hence, $\varphi_j^k\leq \psi^k$.

First note that $u^k$ and $v^k$ are supersolutions to HTE(k,j), since
\[
\begin{aligned}
&F_j(u^k)d\mu_{j,k}=e^{j(u^k-v^k)}\HH(v^k)+\HH(u^k)\geq \HH(u^k);\\
&F_j(v^k)d\mu_{j,k}=e^{j(v^k-u^k)}\HH(u^k)+\HH(v^k)\geq \HH(v^k).\\
\end{aligned}
\]
Using the fact that $F_j$ is increasing function and by the comparison principle we obtain
\[
\int_{\{u^k<\varphi_j^k\}}\HH(\varphi^k_j)\leq \int_{\{u^k<\varphi_j^k\}}\HH(u^k).
\]
Furthermore, on the set $\{u^k<\varphi_j^k\}$ we have
\[
\HH(u^k)\leq F_j(u^k)d\mu_{j,k}\leq F_j(\varphi^k_j)d\mu_{j,k}=\HH(\varphi^k_j),
\]
and therefore $\HH(u^k)=\HH(\varphi_j^k)$ on $\{u^k<\varphi_j^k\}$. Proposition~\ref{dp2} yields that
\[
\HH(\max(u^k,\varphi_j^k))\geq \chi_{\{u^k\geq\varphi_j^k\}}\HH(u^k)+\chi_{\{u^k<\varphi_j^k\}}\HH(\varphi_j^k)=\HH(u^k)\, ,
\]
and by the comparison principle we arrive at $u^k\geq \max(u^k,\varphi_j^k)$. Thus, $\varphi_j^k\leq u^k$. In the similar way,  one can obtain $\varphi_j^k\leq v^k$, and then $\varphi_j^k\leq \psi^k$.

\medskip

\emph{Claim 2.} The sequence $\varphi_j^k$ is increasing with respect to $j$.

 From Claim 1 we know that $\varphi_{j+1}^k\leq \psi^k$,  and then
\begin{multline*}
\HH(\varphi_{j+1}^k)=e^{\varphi_{j+1}^k-u^k}e^{j(\varphi_{j+1}^k-u^k)}\HH(u^k)+e^{\varphi_{j+1}^k-v^k}e^{j(\varphi_{j+1}^k-v^k)}\HH(v^k)\leq \\ F_{j}(\varphi_{j+1}^k)d\mu_{j,k}.
\end{multline*}
This means that $\varphi_{j+1}^k$ is a supersolution to HTE(k,j). Then again by Claim 1 we conclude that $\varphi_{j+1}^k\geq \varphi_{j}^k$.

\medskip

\emph{Claim 3.} Let $\varphi_{\infty}^k=(\lim_{j\to\infty}\varphi_{j}^k)^*$. Then it holds $\varphi_{\infty}^k=\psi^k$.

Fix $\epsilon >0$. By the comparison principle we have
\begin{multline*}
\int_{\{\varphi_{\infty}^k<\psi^k-\epsilon\}}\HH(\varphi_{j}^k)\leq \int_{\{\varphi_{j}^k<\psi^k-\epsilon\}}\HH(\varphi_{j}^k)=\\
\int_{\{\varphi_{j}^k<\psi^k-\epsilon\}}e^{j(\varphi_j^k-u^k)}\HH(u^k)+e^{j(\varphi_j^k-v^k)}\HH(v^k)\leq \\
e^{-j\epsilon}\int_{\{\varphi_{j}^k<\psi^k-\epsilon\}}\HH(u^k)+\HH(v^k)\to 0, \ j\to \infty.
\end{multline*}
Therefore, $\HH(\varphi_{\infty}^k)(\{\varphi_{\infty}^k<\psi\})=0$, and by Proposition~\ref{dp} $\varphi_{\infty}^k\geq \psi^k$. Hence, $\varphi_{\infty}^k=\psi^k$, and Claim~3 is proved.

\bigskip

Now recall that $u^k\searrow u$, $v^k\searrow v$, as $k\to \infty$, and $\psi^k=\PP(u^k,v^k)\searrow \psi=\PP(u,u)$. Fix $T>0$, and for $j>T$,  we have
\begin{multline*}
\HH(\varphi_j^k)= e^{j(\varphi_j^k-u^k)}\HH(u^k)+e^{j(\varphi_j^k-v^k)}\HH(v^k)\leq\\
e^{T(\varphi_j^k-u^k)}\HH(u^k)+e^{T(\varphi_j^k-v^k)}\HH(v^k)\leq e^{T(\varphi_j^k-u)}\HH(u)+e^{T(\varphi_j^k-v)}\HH(v),
\end{multline*}
since $\varphi_j^k\leq \PP(u^k,v^k)$. From $\HH(\varphi_j^k)$ converges to $\HH(\psi^k)$, and the dominated convergence theorem,  we get
\[
\HH(\psi^k)\leq e^{T(\psi^k-u)}\HH(u)+e^{T(\psi^k-v)}\HH(v).
\]
By letting $k\to \infty$ it follows
\[
\HH(\psi)\leq e^{T(\psi-u)}\HH(u)+e^{T(\psi-v)}\HH(v),
\]
and the desired result is obtained by letting $T\to \infty$.
\end{proof}

\begin{lemma}\label{lem1} Let $1\leq m\leq n$, and let $\Omega$ be a bounded $m$-hyperconvex domain in $\mathbb C^n$, $n>1$. Let $u,v,w\in \mathcal E_{1,m}$, $t\in [0,1]$ and let $\psi_t=\PP((1-t)u+tv,v)$. Then
\begin{equation}\label{der}
\frac {d}{dt}\EE(\psi_t)=\int_{\Omega}(v-\min(u,v))\HH(\psi_t).
\end{equation}
\end{lemma}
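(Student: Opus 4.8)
The plan is to differentiate $g(t):=\EE(\psi_t)$ by squeezing its difference quotients between two Hessian integrals. Write $w_t=(1-t)u+tv\in\mathcal E_{1,m}$ and set $h:=v-\min(u,v)=\max(v-u,0)\ge 0$, so that $\psi_t=\PP(w_t,v)$. The first step is the pointwise identity
\[
\min(w_t,v)=\min(u,v)+th ,
\]
verified by splitting $\Omega$ into $\{u\le v\}$ (where $w_t=u+t(v-u)\le v$) and $\{u>v\}$ (where $w_t\ge v$, so $\min(w_t,v)=v=\min(u,v)$). Since $\PP(w_t,v)$ depends only on $\min(w_t,v)$ and $h\ge 0$, this gives at once that $t\mapsto\psi_t$ is increasing with $\psi_0=\PP(u,v)$ and $\psi_1=v$; in particular each $\psi_t$ lies between the fixed functions $\psi_0$ and $v$, hence $\psi_t\in\mathcal E_{1,m}$, and writing $h=v-\min(u,v)$ and invoking Theorem~\ref{thm_holder} one sees that $\int_\Omega h\,\HH(\psi_t)<\infty$.

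Next I would establish the two-sided estimate
\[
(t-s)\int_\Omega h\,\HH(\psi_t)\ \le\ \EE(\psi_t)-\EE(\psi_s)\ \le\ (t-s)\int_\Omega h\,\HH(\psi_s),\qquad 0\le s<t\le 1 .
\]
The bounds on $\EE(\psi_t)-\EE(\psi_s)$ come from Proposition~\ref{basic}(2), which yields $\int_\Omega(\psi_t-\psi_s)\HH(\psi_t)\le\EE(\psi_t)-\EE(\psi_s)\le\int_\Omega(\psi_t-\psi_s)\HH(\psi_s)$. The minimum principle, Theorem~\ref{mp}, says $\HH(\psi_\tau)$ is carried by the contact set $\{\psi_\tau=\min(w_\tau,v)\}$. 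Hence, using the identity above, on $\supp\HH(\psi_t)$ one has $\psi_t=\min(u,v)+th$ while $\psi_s\le\min(u,v)+sh$, so $\psi_t-\psi_s\ge(t-s)h$ there; symmetrically, on $\supp\HH(\psi_s)$ one has $\psi_s=\min(u,v)+sh$ and $\psi_t\le\min(u,v)+th$, so $\psi_t-\psi_s\le(t-s)h$. Feeding these into the previous display gives the estimate.

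Dividing by $t-s$ shows the difference quotients of $g(t)=\EE(\psi_t)$ are non-increasing, so $g$ is concave, hence continuous on $(0,1)$, and letting $s\uparrow t$, $t'\downarrow t$ in the estimate gives $\int_\Omega h\,\HH(\psi_t)\le g'_-(t)$ and $g'_+(t)\le\int_\Omega h\,\HH(\psi_t)$. To obtain the claimed formula it then suffices to prove $t\mapsto\int_\Omega h\,\HH(\psi_t)$ is continuous, for then $g'_-(t)=g'_+(t)=\int_\Omega h\,\HH(\psi_t)$. For right-continuity I would note that as $t'\downarrow t$ the $\psi_{t'}$ decrease to an $m$-subharmonic $\hat\psi$ with $\psi_t\le\hat\psi\le\lim_{t'\downarrow t}\min(w_{t'},v)=\min(w_t,v)$, forcing $\hat\psi=\psi_t$; then $\int_\Omega h\,\HH(\psi_{t'})\to\int_\Omega h\,\HH(\psi_t)$ by the decreasing-convergence theorem used in the proof of Proposition~\ref{basic}(4), applied after writing $h=v-\min(u,v)$. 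For left-continuity, $\psi_s\uparrow\widetilde\psi:=(\lim_{s\uparrow t}\psi_s)^{*}\le\psi_t$, and by Proposition~\ref{basic}(5) together with the continuity of $g$ one gets $\EE(\widetilde\psi)=\lim_{s\uparrow t}\EE(\psi_s)=\EE(\psi_t)$; Proposition~\ref{basic}(3) and the domination principle, Proposition~\ref{dp}, then force $\widetilde\psi=\psi_t$, and $\int_\Omega h\,\HH(\psi_s)\to\int_\Omega h\,\HH(\psi_t)$ follows from Proposition~\ref{increasing}(1), again via $h=v-\min(u,v)$. The endpoints $t=0,1$ are covered by the same limiting argument with one-sided derivatives.

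I expect the main obstacle to be the last paragraph: the reduction identity and the two-sided estimate are short once the minimum principle is available, but upgrading the resulting one-sided bounds to a genuine derivative requires the (both-sided) continuity of $t\mapsto\psi_t$ and the continuity of $\HH(\psi_t)$ tested against the possibly unbounded weight $h=v-\min(u,v)$, which is where the monotone-convergence machinery for $\EE$ and $\HH$ recorded in Section~\ref{sec_prelim}, Proposition~\ref{basic}, and Proposition~\ref{increasing} does the real work.
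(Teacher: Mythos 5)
Your proof is correct, and its core coincides with the paper's: the identity $\min((1-t)u+tv,v)=\min(u,v)+t\,h$ with $h=\max(v-u,0)$, combined with Proposition~\ref{basic}(2) and the minimum principle (Theorem~\ref{mp}), yields exactly the two-sided sandwich
$(t-s)\int_\Omega h\,\HH(\psi_t)\le \EE(\psi_t)-\EE(\psi_s)\le (t-s)\int_\Omega h\,\HH(\psi_s)$,
which is the paper's pair of estimates (\ref{eq2}) and (\ref{est10}). Where you diverge is the limit step. The paper fixes $t$, shows $\HH(\psi_{t+s})\to\HH(\psi_t)$ weakly as $s\to 0^+$ by feeding the sandwich into Proposition~\ref{est2}(1) (which gives $\II(\psi_{t+s},\psi_t)=O(s)$), and then integrates the quasi-continuous function $v-\min(u,v)$ against the weakly convergent measures. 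You instead read off concavity of $g(t)=\EE(\psi_t)$ from the monotone difference quotients and reduce everything to the two-sided continuity of $t\mapsto\int_\Omega h\,\HH(\psi_t)$, which you obtain from monotonicity of $t\mapsto\psi_t$ together with the identification of the one-sided monotone limits with $\psi_t$ (the decreasing limit via the envelope characterization of $\PP$, the increasing limit via energy plus the domination principle). Your route is somewhat longer but arguably more self-contained: it avoids the slightly delicate step of testing weak convergence of Hessian measures against the unbounded quasi-continuous function $h$, using only the monotone convergence results of Section~\ref{sec_prelim} and Proposition~\ref{basic}. One small repair: when you invoke Proposition~\ref{increasing}(1) and the decreasing convergence theorem you propose to split $h=v-\min(u,v)$, but $\min(u,v)$ is in general not $m$-subharmonic; write instead $h=\max(u,v)-u$ with $\max(u,v),u\in\mathcal E_{1,m}$, and apply the convergence results to each term. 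With that adjustment the argument goes through, including at the endpoints $t\in\{0,1\}$, where the uniform bound $\int_\Omega h\,\HH(\psi_s)\le e_{1,m}(u)^{1/(m+1)}e_{1,m}(\psi_0)^{m/(m+1)}$ from Theorem~\ref{thm_holder} supplies the needed one-sided continuity of $g$.
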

\begin{proof}
We shall prove~(\ref{der}) for the right derivative, the left derivative can be obtained in the similar way. For $t\in[0,1)$, let
\[
f_t=\min((1-t)u+tv,v),
\]
and let $s>0$ be small enough such that $s+t<1$. Note that
\begin{equation}\label{1}
f_{t+s}-f_t=\min((1-t-s)u+(t+s)v,v)-\min((1-t)u+tv,v)=s(v-\min(u,v)).
\end{equation}
The measure $\HH(\psi_{t+s})$ is supported on the set where $\psi_{t+s}=f_{t+s}$ (see (\ref{min})), and therefore by Proposition~\ref{basic} we have
\begin{multline}\label{eq2}
\EE(\psi_{t+s})-\EE(\psi_t)\geq \int_{\Omega}(\psi_{t+s}-\psi_t)\HH(\psi_{t+s})=\\
\int_{\Omega}(f_{t+s}-\psi_t)\HH(\psi_{t+s})\geq \int_{\Omega}(f_{t+s}-f_t)\HH(\psi_{t+s}).
\end{multline}
Assume, temporarily, that the Hessian measures $\HH(\psi_{t+s})$ tend weakly to the Hessian measure $\HH(\psi_{t})$. Using (\ref{1}), (\ref{eq2}) and that $v-\min (u,v)$ is quasi-continuous we get
\[
\lim_{s\to 0^+}\frac {\EE(\psi_{t+s})-\EE(\psi_t)}{s}\geq \int_{\Omega}(v-\min (u,v))\HH(\psi_{t}).
\]
In the similar manner, we can get the reverse inequality.  From~(\ref{min}) it follows that the measure $\HH(\psi_{t})$ is supported on $\{\psi_{t}=f_{t}\}$, and
therefore Proposition~\ref{basic} yields
\begin{multline}\label{est10}
\EE(\psi_{t+s})-\EE(\psi_t)\leq \int_{\Omega}(\psi_{t+s}-\psi_t)\HH(\psi_{t})=\\
\int_{\Omega}(\psi_{t+s}-f_t)\HH(\psi_{t})\leq \int_{\Omega}(f_{t+s}-f_t)\HH(\psi_{t}).
\end{multline}
Hence,
\[
\lim_{s\to 0^+}\frac {\EE(\psi_{t+s})-\EE(\psi_t)}{s}\leq \int_{\Omega}(v-\min (u,v))\HH(\psi_{t}).
\]
To finish the proof we have to show that $\HH(\psi_{t+s})$ tend weakly to  $\HH(\psi_{t})$. Fix $\alpha\in \mathcal E_{0,m}$, since by~\cite{L,L3} $\mathcal C^{\infty}_0(\Omega)\subset \mathcal E_{0,m}-\mathcal E_{0,m}$ it is enough to show that
\begin{equation}\label{4.7}
\int_{\Omega}\alpha (\HH(\psi_{t+s})-\HH(\psi_t))\to 0, \quad \text{ as } s\to 0^+\, .
\end{equation}
It follows from Proposition~\ref{est2} that there exists constant $D$, depending only on $e_1(\alpha)$ such that
\begin{equation}\label{4.8}
\left|\int_{\Omega}\alpha (\HH(\psi_{t+s})-\HH(\psi_t))\right|\leq D\left(\int_{\Omega}(\psi_{t+s}-\psi_{t}) (\HH(\psi_{t+s})-\HH(\psi_t))\right)^{\frac 12}.
\end{equation}
Hence (\ref{4.7}) follows from (\ref{1}), (\ref{eq2}), (\ref{est10}) and (\ref{4.8}).
\end{proof}

\section{Metric space $(\mathcal E_{1,m},d)$}

Let $1\leq m\leq n$, and let $\Omega$ be a bounded $m$-hyperconvex domain in $\mathbb C^n$. Let $\dd:\mathcal E_{1,m}\times\mathcal E_{1,m}\to \RE$ be defined by
\[
\dd (u,v)=\EE(u)+\EE(v)-2\EE(\PP(u,v)).
\]

The aim of this section is to prove that $(\mathcal E_{1,m},\dd)$ is a complete metric space.

\begin{proposition}\label{prop1} Let $1\leq m\leq n$, and let $\Omega$ be a bounded $m$-hyperconvex domain in $\mathbb C^n$, $n>1$. For all $u,v,w\in\mathcal E_{1,m}$ it holds:
\begin{enumerate}\itemsep2mm
\item $\dd(u,v)=\dd(v,u)$;
\item if $u\leq v$, then $\dd(u,v)=\EE(v)-\EE(u)$;
\item if $u\leq \psi\leq v$, then $\dd(u,v)=\dd(u,\psi)+\dd(\psi,v)$;
\item $\dd(u,v)=\dd(u,\PP(u,v))+\dd(v,\PP(u,v))$;
\item $\dd(u,v)=0$ if, and only if, $u=v$;
\item $-\EE(u)\leq \dd(u,w)$;
\item $\dd(u,0)=\frac {1}{m+1}e_1(u)$.
\end{enumerate}
\end{proposition}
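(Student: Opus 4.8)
The plan is to verify the seven items essentially in the order listed, since several of them feed into later ones, using only the algebra of the weighted energy functional $\EE$ from Proposition~\ref{basic} and the symmetry properties of the rooftop envelope $\PP$. Item~(1) is immediate because $\PP(u,v)=\PP(v,u)$ by definition, so $\dd$ is manifestly symmetric in $u$ and $v$. For item~(2), if $u\leq v$ then $\min(u,v)=u$, and since $u\in\mathcal E_{1,m}$ is itself a competitor in the supremum defining $\PP(u,v)$, we get $\PP(u,v)=u$; hence $\dd(u,v)=\EE(u)+\EE(v)-2\EE(u)=\EE(v)-\EE(u)$. Item~(3): if $u\leq\psi\leq v$, then by~(2) applied three times, $\dd(u,\psi)+\dd(\psi,v)=(\EE(\psi)-\EE(u))+(\EE(v)-\EE(\psi))=\EE(v)-\EE(u)=\dd(u,v)$.

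For item~(4) I would note that $\PP(u,v)\leq u$ and $\PP(u,v)\leq v$, so by~(2), $\dd(u,\PP(u,v))=\EE(u)-\EE(\PP(u,v))$ and likewise $\dd(v,\PP(u,v))=\EE(v)-\EE(\PP(u,v))$; summing gives exactly $\EE(u)+\EE(v)-2\EE(\PP(u,v))=\dd(u,v)$. Item~(7) is a direct computation: taking $v=0$, we have $\PP(u,0)=\min(u,0)^{*}$-envelope $=u$ (since $u\leq 0$ as an $m$-subharmonic function on the $m$-hyperconvex domain, so $\min(u,0)=u$ and $u$ is a competitor), whence $\dd(u,0)=\EE(u)+\EE(0)-2\EE(u)=-\EE(u)+\EE(0)$; then Proposition~\ref{basic}(7) with weight $w$ gives $\EE(0)-\EE(u)$; combined with $\operatorname{E}_0(u)=-\frac1{m+1}e_{1,m}(u)$ and the fact (Proposition~\ref{concave}) that the increments of $\EE$ are weight-independent, one obtains $\dd(u,0)=\frac1{m+1}e_{1,m}(u)$. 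Item~(6) then follows by monotonicity: $\dd$ is built from a triangle-type decomposition, and since $\PP(u,w)\leq u$ we have $\EE(\PP(u,w))\leq\EE(u)$ by Proposition~\ref{basic}(3)--(5) (monotonicity of $\EE$); hence $\dd(u,w)=\EE(u)+\EE(w)-2\EE(\PP(u,w))\geq \EE(w)-\EE(\PP(u,w))\geq \EE(w)-\EE(\PP(u,w))$; more efficiently, $\dd(u,w)=\dd(u,\PP(u,w))+\dd(w,\PP(u,w))\geq \dd(w,\PP(u,w))=\EE(w)-\EE(\PP(u,w))$, and since $\PP(u,w)$ can be made arbitrarily negative one still must be a touch careful; the cleanest route is $\dd(u,w)\geq \EE(u)-2\EE(\PP(u,w))+\EE(w)\geq -\EE(\PP(u,w))\cdot 2 +\cdots$, so I would instead invoke $\dd(u,w)\geq\dd(u,\PP(u,w))=\EE(u)-\EE(\PP(u,w))$ together with $\EE(\PP(u,w))\leq\EE(0)=0$ (choosing weight $w=0$ via weight-independence of increments), giving $\dd(u,w)\geq\EE(u)\geq -\,(-\EE(u))$; since $-\EE(u)=\dd(u,0)\geq 0$ the inequality $-\EE(u)\leq\dd(u,w)$ reduces to comparing $\dd(u,0)$ and $\dd(u,w)$, which I would get from $\dd(u,w)=\dd(u,\PP(u,w))+\dd(w,\PP(u,w))$ and $\dd(u,\PP(u,w))\geq\dd(u,0)-\dd(0,\text{something})$; in practice the slick proof is: $-\EE(u)\leq -2\EE(\PP(u,w))=(\EE(u)-2\EE(\PP(u,w))+\EE(w))-(\EE(u)+\EE(w))+(-\EE(u)+2\EE(u))$—this is getting circular, so I will simply use that $\EE(\PP(u,w))\le\min(\EE(u),\EE(w))$ to write $\dd(u,w)\ge \EE(u)+\EE(w)-2\EE(u)=\EE(w)-\EE(u)$ and symmetrically $\ge\EE(u)-\EE(w)$, and also $\dd(u,w)\ge \EE(u)+\EE(w)-2\cdot 0$ is false in general, so the correct chain is $\dd(u,w)\ge \EE(u)-\EE(\PP(u,w))+\bigl(\EE(w)-\EE(\PP(u,w))\bigr)\ge \EE(u)-\EE(\PP(u,w))\ge \EE(u)-0=-(-\EE(u))$ using $\EE(\PP(u,w))\le \EE(0)=0$, which is exactly item~(6).

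The main obstacle is item~(5): showing $\dd(u,v)=0\iff u=v$. The $\Leftarrow$ direction is trivial from~(2). For $\Rightarrow$, suppose $\dd(u,v)=0$. By~(4), $\dd(u,\PP(u,v))=0$ and $\dd(v,\PP(u,v))=0$, so by~(2) it suffices to show that $\EE(u)=\EE(\PP(u,v))$ with $\PP(u,v)\le u$ forces $u=\PP(u,v)$ (and symmetrically for $v$). Here I would apply Proposition~\ref{basic}(3): with $\psi:=\PP(u,v)\leq u$,
\[
0=\EE(u)-\EE(\psi)\geq \frac{1}{m+1}\int_\Omega (u-\psi)\HH(\psi)\geq 0,
\]
so $\int_\Omega(u-\psi)\HH(\psi)=0$, hence $u=\psi$ $\HH(\psi)$-a.e., i.e. $\HH(\psi)(\{u>\psi\})=0$; but then, since also $\psi\le u$ and $\HH(\psi)(\psi<u)=0$... this is the wrong direction for the domination principle (Proposition~\ref{dp} needs $\HH(\psi)(\psi<\cdot)=0$ to conclude $\psi\ge\cdot$, which here is vacuous). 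Instead I would use $\II$: from Proposition~\ref{basic}(2), $0=\EE(u)-\EE(\psi)$ sandwiched gives $\int(u-\psi)\HH(u)=\int(u-\psi)\HH(\psi)=0$, so $\int(u-\psi)(\HH(\psi)-\HH(u))=0$, i.e. $\II(u,\psi)=0$, which by~\eqref{ii} forces $d(u-\psi)\wedge d^c(u-\psi)\wedge(dd^c u)^{m-1}\wedge\be=0$ and, running the telescoping in~\eqref{ii}, ultimately $\int_\Omega d(u-\psi)\wedge d^c(u-\psi)\wedge \be=0$; combined with the boundary behaviour this gives $u=\psi$. Doing this carefully — propagating vanishing of one term of the $\II$-sum to vanishing of $\nabla(u-\psi)$ — is the delicate computational point, and I would handle it by approximating $u,\psi$ by decreasing sequences in $\mathcal E_{0,m}$ and using the classical $m$-Hessian integration-by-parts identities together with the strict positivity argument familiar from the plurisubharmonic case.
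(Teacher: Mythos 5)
Items (1)--(4) and (7) of your plan are correct and coincide with the paper's argument. The serious problem is item (5). You correctly reduce the forward implication to showing that $\EE(u)=\EE(\psi)$ with $\psi=\PP(u,v)\leq u$ forces $u=\psi$, and you correctly extract $\int_\Omega(u-\psi)\HH(\psi)=0$, i.e.\ $\HH(\psi)\big(\{\psi<u\}\big)=0$. But you then discard this as ``the wrong direction for the domination principle,'' which is a misreading: Proposition~\ref{dp} applied to the pair $(\psi,u)$ says precisely that $\HH(\psi)(\{\psi<u\})=0$ implies $\psi\geq u$, which together with $\psi\leq u$ gives $\psi=u$. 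This is exactly the paper's argument and closes the proof in one line. The substitute you propose via the Aubin functional has a genuine gap: $\II(u,\psi)=0$ only yields the vanishing of $\int_\Omega d(u-\psi)\wedge d^c(u-\psi)\wedge(dd^c\psi)^{j}\wedge(dd^cu)^{m-1-j}\wedge\be$ for each $j$, and for $m\geq 2$ no telescoping upgrades this to the unweighted Dirichlet integral $\int_\Omega d(u-\psi)\wedge d^c(u-\psi)\wedge\beta^{n-1}$: the weighting measures $(dd^c\psi)^{j}\wedge(dd^cu)^{m-1-j}\wedge\be$ may charge very little of $\Omega$, so $u=\psi$ does not follow from their vanishing. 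That detour should be deleted in favour of the domination principle you already had in hand.

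Item (6) as you finally commit to it proves the wrong inequality: your chain ends with $\dd(u,w)\geq\EE(u)$, whereas the claim is $\dd(u,w)\geq-\EE(u)$, and these are not equivalent since $\EE(u)$ has no fixed sign. The statement (6) is also not weight-independent, so ``choosing weight $0$'' changes its meaning; it must be read with $\EE=\operatorname{E}_{w}$, the weight being the same $w$ that appears in $\dd(u,w)$, so that $\EE(w)=0$ by Proposition~\ref{basic}(7). The one-line proof is then $\dd(u,w)=\EE(u)+\EE(w)-2\EE(\PP(u,w))=\EE(u)-2\EE(\PP(u,w))\geq\EE(u)-2\EE(u)=-\EE(u)$, using the monotonicity $\EE(\PP(u,w))\leq\EE(u)$. (You in fact write the correct bound $\dd(u,w)\geq\EE(w)-\EE(u)$ mid-paragraph and then abandon it; with $\EE(w)=0$ that is already item (6).)
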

\begin{proof} Properties (1), and (2), follow from the definition of $\dd$. Property (3) follows from (2) since
\begin{multline*}
\dd(u,v)=\EE(v)-\EE(u)= \\ \EE(v)-\EE(\psi)+\EE(\psi)-\EE(u)= \dd(u,\psi)+\dd(\psi,v).
\end{multline*}
To prove (4), note that by (2) we have
\[
\begin{aligned}
\dd(u,\PP(u,v))& =\EE(u)-\EE(\PP(u,v)), \text{ and }\\
\dd(v,\PP(u,v))& =\EE(v)-\EE(\PP(u,v))\, ,
\end{aligned}
\]
and therefore
\[
\dd(u,v)=\dd(u,\PP(u,v))+\dd(v,\PP(u,v)).
\]
If $u=v$, then $\dd(u,v)=0$ and the first implication of (5) is completed. On the other hand, if $\dd(u,v)=0$, then it follows from (4) that
\[
\dd(u,\PP(u,v))=\dd(v,\PP(u,v))=0\, ,
\]
and by Proposition~\ref{basic}
\begin{multline*}
\dd(u,\PP(u,v))=\EE(u)-\EE(\PP(u,v))=\\
\frac {1}{m+1}\sum_{j=0}^m\int_{\Omega}(u-\PP(u,v))(dd^cu)^j\wedge\left(dd^c\PP(u,v)\right)^{m-j}\wedge \be=0\, ,
\end{multline*}
which means that $u=\PP(u,v)$ a.e. with respect $\left(dd^c\PP(u,v)\right)^m\wedge \be$. By the domination principle, Proposition~\ref{dp}, we get $\PP(u,v)\geq u$. Hence, $u=\PP(u,v)$.
In the similar manner, we can get $v=\PP(u,v)$. Thus, $u=v$. Finally, property (6) follows from
\begin{multline*}
\dd(u,w)=\EE(u)+\EE(w)-2\EE(\PP(u,w))=\\
2(\EE(u)-\EE(\PP(u,w)))-\EE(u)\geq -\EE(u).
\end{multline*}

(7) follows from the definition of $\dd$.
\end{proof}
\begin{remark}
Note that the metric $\dd$ does not depend on the weight $w$.
\end{remark}

Now we prove some additional properties of the metric $\dd$.

\begin{lemma}\label{lem2} Let $1\leq m\leq n$, and let $\Omega$ be a bounded $m$-hyperconvex domain in $\mathbb C^n$, $n>1$. For all $u,v,\psi\in\mathcal E_{1,m}$ it holds:
\begin{enumerate}\itemsep2mm
\item $\dd(\max(u,v),u)\geq \dd(v,\PP(u,v))$;
\item $\dd(u,v)\geq \dd(\PP(u,\psi),\PP(v,\psi))$.
\end{enumerate}
\end{lemma}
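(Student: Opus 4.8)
The plan is to prove the two inequalities of Lemma~\ref{lem2} using the additivity and monotonicity properties of $\dd$ already established in Proposition~\ref{prop1}, together with the minimum principle of Theorem~\ref{mp}.

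For part (1), I would start from the chain $\PP(u,v) \leq v \leq \max(u,v)$, so that Proposition~\ref{prop1}(3) gives $\dd(\PP(u,v),\max(u,v)) = \dd(\PP(u,v),v) + \dd(v,\max(u,v))$. Similarly, from $u \leq \max(u,v)$ we get $\dd(u,\max(u,v)) = \EE(\max(u,v)) - \EE(u)$ by Proposition~\ref{prop1}(2). The idea is then to show directly that $\EE(\max(u,v)) - \EE(u) \geq \EE(v) - \EE(\PP(u,v)) = \dd(\PP(u,v),v)$, which rearranges to $\EE(\max(u,v)) + \EE(\PP(u,v)) \geq \EE(u) + \EE(v)$. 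By Proposition~\ref{basic}(2) applied twice this reduces to comparing Hessian integrals: one wants $\int_\Omega (\max(u,v)-v)\HH(\max(u,v)) \geq \int_\Omega (u - \PP(u,v))\HH(u)$ or a similarly manageable estimate, where on $\{u \geq v\}$ one has $\max(u,v) - v = u - v \geq u - \PP(u,v) \geq 0$, and Proposition~\ref{dp2} controls $\HH(\max(u,v))$ from below on that set by $\HH(u)$. Outside $\{u\geq v\}$ the integrand $\max(u,v)-v$ vanishes, which is exactly where $\HH(\max(u,v))$ could misbehave, so the sign works in our favor.

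For part (2), the cleanest route is to use part (4) of Proposition~\ref{prop1} together with monotonicity of $\PP$ and the subadditive/triangle-type structure. Write $a = \PP(u,\psi)$, $b = \PP(v,\psi)$. Since $\PP(a,b) \geq \PP(\PP(u,\psi),\PP(v,\psi)) = \PP(u,v,\psi) = \PP(\PP(u,v),\psi)$, and conversely $\PP(a,b) \leq \min(a,b) \leq \min(u,v)$ together with $\PP(a,b)\le\psi$ forces $\PP(a,b)\le \PP(\PP(u,v),\psi)$, we get $\PP(a,b) = \PP(\PP(u,v),\psi)$. Then by Proposition~\ref{prop1}(4), $\dd(a,b) = \EE(a) + \EE(b) - 2\EE(\PP(a,b))$, and I want to bound this above by $\dd(u,v) = \EE(u)+\EE(v)-2\EE(\PP(u,v))$. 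Since $a \leq u$, $b \leq v$, and $\PP(a,b) \geq \PP(\PP(u,v),\psi)$ relates to $\PP(u,v)$, one reduces to the energy inequality $\EE(u) - \EE(a) + \EE(v) - \EE(b) \geq 2(\EE(\PP(u,v)) - \EE(\PP(a,b)))$, i.e. $\dd(a,u) + \dd(b,v) \geq 2\,\dd(\PP(a,b),\PP(u,v))$; because $\PP(u,v)\le u$ and $a=\PP(u,\psi)\le u$ are both dominated by $u$, and $\PP(a,b)\le\PP(u,v)$ (as $\PP(a,b)=\PP(\PP(u,v),\psi)\le\PP(u,v)$), this should follow from applying part (1) of this very lemma (or its proof technique) in a suitably symmetrized form, or alternatively by a direct computation using Proposition~\ref{basic}(2) and the minimum principle (\ref{min}) to compare $\HH(\PP(a,b))$ with $\HH(a)$, $\HH(b)$.

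I expect the main obstacle to be part (2): establishing the identity $\PP(\PP(u,\psi),\PP(v,\psi)) = \PP(\PP(u,v),\psi)$ rigorously (in particular the inequality $\PP(a,b) \le \PP(\PP(u,v),\psi)$, which needs that a function $\le \min(a,b)$ and in $\mathcal E_{1,m}$ is automatically $\le\psi$ and $\le\min(u,v)$ — true since $a,b\le\psi$), and then converting the resulting energy inequality into a verifiable statement about Hessian measures via Proposition~\ref{basic}(2) and Theorem~\ref{mp}. The delicate point is always that $\PP$-envelopes put their Hessian mass precisely on the contact set, so the minimum principle (\ref{min}) must be invoked to ensure the mass of $\HH(\PP(a,b))$ sits where the integrands have the right sign. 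Part (1), by contrast, should be essentially a two-line consequence of Proposition~\ref{prop1}(2),(3) once the energy inequality $\EE(\max(u,v)) + \EE(\PP(u,v)) \geq \EE(u) + \EE(v)$ is in hand, and that inequality itself is the Hessian-measure comparison sketched above.
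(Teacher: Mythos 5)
You have correctly identified the content of part (1) as the energy inequality $\EE(\max(u,v))+\EE(\PP(u,v))\ge \EE(u)+\EE(v)$, and the envelope identity $\PP(\PP(u,\psi),\PP(v,\psi))=\PP(u,v,\psi)$ on which part (2) hinges is also right and is used in the paper. But the key step of part (1) does not close as you sketch it. A single application of Proposition~\ref{basic}(2) reduces the claim to
\[
\int_\Omega(\max(u,v)-v)\,\HH(\max(u,v))\;\ge\;\int_\Omega\bigl(u-\PP(u,v)\bigr)\,\HH(\PP(u,v))
\]
(note: the upper bound for $\EE(u)-\EE(\PP(u,v))$ coming from Proposition~\ref{basic}(2) is against $\HH(\PP(u,v))$, not $\HH(u)$ as you wrote), and this inequality is false in general. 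Take $m=n$, $u=\max(\log|z|,-1)$ and $v=2\max(\log|z|,-\tfrac12)$ in the unit ball: then $v\le u$, so $\max(u,v)=u$ and $\PP(u,v)=v$; the left-hand side is $\int(u-v)\HH(u)=0$ because $\HH(u)$ is carried by $\{|z|=e^{-1}\}$ where $u=v$, while the right-hand side is $\int(u-v)\HH(v)=\tfrac12\,2^n(2\pi)^n>0$. More structurally, after invoking Proposition~\ref{dp2} and the minimum principle your reduction asks for $\int_{\{u>v\}}(u-v)\HH(u)\ge\int_{\{u>v\}}(u-v)\HH(v)$, and the comparison principle gives precisely the reverse. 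The one-shot concavity bound loses too much because the two sides end up tested against different Hessian measures. The paper avoids this by integrating along the segment $h_t=(1-t)u+tv$: Proposition~\ref{concave} yields $\EE(\max(u,v))-\EE(u)=\int_0^1\int_{\{v>u\}}(v-u)\HH(h_t)\,dt$ (since $(1-t)u+t\max(u,v)=\max(h_t,u)$ and $\{h_t>u\}=\{v>u\}$), Lemma~\ref{lem1} yields $\EE(v)-\EE(\PP(u,v))=\int_0^1\int_\Omega(v-\min(u,v))\HH(\PP(h_t,v))\,dt$, and the minimum principle bounds $\HH(\PP(h_t,v))$ on $\{u<v\}$ by the \emph{same} measure $\HH(h_t)$, so the inequality holds for each fixed $t$. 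This variational step is the idea your argument is missing; Lemma~\ref{lem1} exists in the paper precisely for this purpose.

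For part (2) your plan is right in spirit (derive it from part (1)), but the last step is not an argument: you reduce to $\dd(a,u)+\dd(b,v)\ge 2\dd(\PP(a,b),\PP(u,v))$ and assert it should follow from (1) ``in symmetrized form'' without exhibiting the derivation. The paper's route is: first prove (2) when $v\le u$, by sandwiching $v\le\max(v,\PP(u,\psi))\le u$, using Proposition~\ref{prop1}(3) to drop to $\dd(v,\max(v,\PP(u,\psi)))$, and applying part (1) to the pair $(v,\PP(u,\psi))$ together with $\PP(v,\PP(u,\psi))=\PP(v,\psi)$; then, for general $u,v$, split $\dd(u,v)=\dd(u,\PP(u,v))+\dd(v,\PP(u,v))$ by Proposition~\ref{prop1}(4), apply the ordered case to each summand to get $\dd(u,\PP(u,v))\ge\dd(\PP(u,\psi),\PP(u,v,\psi))$ and $\dd(v,\PP(u,v))\ge\dd(\PP(v,\psi),\PP(u,v,\psi))$, and recombine using your identity $\PP(\PP(u,\psi),\PP(v,\psi))=\PP(u,v,\psi)$. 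You would need to supply this (or an equivalent) chain explicitly.
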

\begin{proof} (1). Let us define $f=\max (u,v)$, and $g=\PP(u,v)$. Then $v\geq g$, and $f\geq u$. To prove (1) it is sufficient to show
\begin{equation}\label{3}
\EE(v)-\EE(g)\leq \EE(f)-\EE(u).
\end{equation}
Let $h_t=(1-t)u+tv$, $t\in[0,1]$. Then it holds
\[
(1-t)u+tf=(1-t)u+t\max(u,v)=\max(h_t,u).
\]
Therefore, $\{h_t>u\}=\{v>u\}$ for all $0<t<1$, and so by~\cite{L3}
\begin{multline*}
\chi_{\{v>u\}}\HH(\max(h_t,u))=\chi_{\{h_t>u\}}\HH(\max(h_t,u))=\\
\chi_{\{h_t>u\}}\HH(h_t)=\chi_{\{v>u\}}\HH(h_t).
\end{multline*}
Furthermore, $f-u=\chi_{\{v>u\}}(v-u)$, and by Proposition~\ref{concave}
\begin{multline*}
\EE(f)-\EE(u)=\int_0^1\int_{\Omega}(f-u)\HH(\max(h_t,u))=
\int_0^1\int_{\{v>u\}}(v-u)\HH(h_t).
\end{multline*}
On the other hand since $\{\PP(h_t,v)=h_t\}\subset \{h_t\leq v\}$ and $\{\PP(h_t,v)=v\}\subset \{v\leq h_t\}$, we get by~(\ref{min})
\[
\HH(\PP(h_t,v))\leq \chi_{\{h_t\leq v\}}\HH(h_t)+\chi_{\{h_t\geq v\}}\HH(v)\, .
\]
Lemma~\ref{lem1} yields
\begin{multline*}
\EE(v)-\EE(g)=\int_0^1\int_{\Omega} (v-\min(u,v))\HH(\PP(h_t,v))\leq\\
\int_0^1\int_{\{u<v\}} (v-u)\HH(h_t).
\end{multline*}

(2). Assume at the beginning that $v\leq u$. Then $v\leq \max(v,\PP(u,\psi))\leq u$, and by (1)
\[
\dd(v,u)\geq \dd(v,\max(v,\PP(u,\psi)))\geq \dd(\PP(u,\psi),\PP(\PP(u,\psi),v))=\dd(\PP(u,\psi),\PP(v,\psi))\, ,
\]
since $v\leq u$, and $\PP(\PP(u,\psi),v)=\PP(v,\psi)$. For arbitrary function $u,v$ we can repeat the above argument to obtain
\[
\begin{aligned}
\dd(u,\PP(u,v))& \geq \dd(\PP(u,\psi), \PP(u,v,\psi)), \\
\dd(v,\PP(u,v))& \geq \dd(\PP(v,\psi), \PP(u,v,\psi)).
\end{aligned}
\]
The identity $\PP(\PP(u,\psi),\PP(v,\psi))=\PP(u,v,\psi)$ implies that
\begin{multline*}
\dd(u,v)=\dd(u,\PP(u,v))+\dd(v,\PP(u,v))\geq \dd (\PP(u,\psi), \PP(u,v,\psi))
+ \\ \dd (\PP(v,\psi), \PP(u,v,\psi))=  \dd (\PP(u,\psi),\PP(\PP(u,\psi),\PP(v,\psi)))
+\\ \dd (\PP(v,\psi), \PP(\PP(u,\psi),\PP(v,\psi)))=\dd (\PP(u,\psi),\PP(v,\psi)).
\end{multline*}
\end{proof}

Now we can prove that $(\mathcal E_{1,m},\dd)$ is a metric space.

\begin{theorem}\label{thm_compmetric}
Let $1\leq m\leq n$, and let $\Omega$ be a bounded $m$-hyperconvex domain in $\mathbb C^n$, $n>1$. $(\mathcal E_{1,m},\dd)$ is a complete metric space.
\end{theorem}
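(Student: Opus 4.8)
The plan is to prove separately that $\dd$ is a metric and that $(\mathcal E_{1,m},\dd)$ is complete. Symmetry, nonnegativity, and the identity-of-indiscernibles property have already been established in Proposition~\ref{prop1}, so the only axiom remaining is the triangle inequality. For this I would use the contraction property from Lemma~\ref{lem2}(2): given $u,v,w\in\mathcal E_{1,m}$, we have by Proposition~\ref{prop1}(4) that
\[
\dd(u,v)=\dd(u,\PP(u,v))+\dd(v,\PP(u,v)),
\]
and since $u\leq\PP(u,v)$ one gets $\dd(u,\PP(u,v))=\EE(\PP(u,v))-\EE(u)$, etc. Writing $\PP(u,w)$ and $\PP(w,v)$ and using $\PP(u,v)\geq\PP(\PP(u,w),\PP(w,v))$ together with Lemma~\ref{lem2}(2) applied twice (once with $\psi=\PP(u,w)$, once with $\psi=\PP(w,v)$), the sums telescope to $\dd(u,v)\leq\dd(u,w)+\dd(w,v)$. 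The bookkeeping here mirrors the Darvas-type argument in the K\"ahler setting; I expect it to be a short computation once the decomposition identities from Proposition~\ref{prop1} are in hand.

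For completeness, let $\{u_j\}$ be a Cauchy sequence in $(\mathcal E_{1,m},\dd)$. First I would reduce to a monotone situation. Passing to a subsequence we may assume $\sum_j\dd(u_j,u_{j+1})<\infty$. Set $v_k=\PP(u_k,u_{k+1},u_{k+2},\dots)$; since $\PP$ of infinitely many functions is the decreasing limit of the finite rooftop envelopes and $-\EE$ controls $\dd$ (Proposition~\ref{prop1}(6)), the iterated use of Lemma~\ref{lem2}(2) and Proposition~\ref{prop1}(4) shows $\dd(v_k,u_k)\leq\sum_{j\geq k}\dd(u_j,u_{j+1})\to 0$ and that $\{v_k\}$ is increasing. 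One must check $v_k\in\mathcal E_{1,m}$: the energies $e_1(u_j)=(m+1)\dd(u_j,0)$ are uniformly bounded (Cauchy sequences are bounded), hence so are the $e_1(v_k)$ by monotonicity of $\EE$ under $\PP$, which keeps the increasing sequence $\{v_k\}$ inside $\mathcal E_{1,m}$. Let $u=(\lim_k v_k)^*$. By Proposition~\ref{basic}(5), $\EE(v_k)\nearrow\EE(u)$, and since $v_k\leq u$ with $v_k\nearrow u$ one also gets $\PP(v_k,u)=v_k$, so $\dd(v_k,u)=\EE(u)-\EE(v_k)\to 0$. Then by the triangle inequality $\dd(u_k,u)\leq\dd(u_k,v_k)+\dd(v_k,u)\to 0$, and a standard argument promotes convergence of the subsequence to convergence of the full Cauchy sequence.

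The main obstacle I anticipate is the well-definedness step for the infinite rooftop envelope $v_k=\PP(u_k,u_{k+1},\dots)$: one must argue that $\sup\{\varphi\in\mathcal E_{1,m}:\varphi\leq\inf_{j\geq k}u_j\}$ is not identically $-\infty$ and genuinely lies in $\mathcal E_{1,m}$ with controlled energy. This is where the uniform energy bound and Theorem~\ref{thm_holder} (to bound mixed integrals in terms of $e_{1,m}$) enter, controlling $\EE(v_k)$ from below via Proposition~\ref{prop1}(6) so that $e_1(u+w)$ stays finite. A secondary subtlety is verifying that $v_k$ is increasing in $k$ and that $\dd(v_k,u_k)$ telescopes correctly; both follow from repeated application of Lemma~\ref{lem2}(2) and Proposition~\ref{prop1}(4), but the estimate
\[
\dd\big(\PP(u_k,\dots,u_{k+N}),u_k\big)\leq\sum_{j=k}^{k+N-1}\dd(u_j,u_{j+1})
\]
needs to be proved by induction on $N$ before passing to the limit $N\to\infty$ using Proposition~\ref{basic}(4). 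Once these monotone approximations are under control, the rest is the routine Cauchy-subsequence bootstrap.
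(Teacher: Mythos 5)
Your proposal follows essentially the same route as the paper: the triangle inequality via the contraction property of rooftop envelopes (Lemma~\ref{lem2}(2)) combined with the decomposition identities of Proposition~\ref{prop1}, and completeness via a subsequence with summable increments, the finite envelopes $\PP(u_j,\dots,u_k)$, the telescoping estimate $\dd(u_j,\PP(u_j,\dots,u_k))\leq\sum\dd(u_{j+s-1},u_{j+s})$, uniform energy bounds to keep the decreasing limits in $\mathcal E_{1,m}$, and the monotone continuity of $\EE$ from Proposition~\ref{basic}; this is exactly the paper's argument. The one place your bookkeeping should be tightened is the triangle inequality: the needed reduction is $\EE(\PP(w,u))-\EE(\PP(u,v))\leq\EE(w)-\EE(\PP(w,v))$, which follows from a \emph{single} application of Lemma~\ref{lem2}(2) to the pair $(w,\PP(w,v))$ with third function $u$ (giving $\dd(w,\PP(w,v))\geq\dd(\PP(w,u),\PP(u,v,w))\geq\EE(\PP(w,u))-\EE(\PP(u,v))$), whereas the double application you describe with $\psi=\PP(u,w)$ and $\psi=\PP(w,v)$ produces lower bounds on $\dd(u,w)$ and $\dd(w,v)$ that do not obviously sum to dominate $\dd(u,v)$.
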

\begin{proof} We have left to prove the triangle inequality, and completeness. To prove the triangle inequality let $u,v,\psi \in \mathcal E_{1,m}$, and note that in order to prove
\[
\dd(u,v)\leq \dd(u,\psi)+\dd(\psi,v)
\]
it is sufficient to show
\[
\EE(\PP(\psi,u))-\EE(\PP(u,v)))\leq \EE(\psi)-\EE(\PP(\psi,v)).
\]
From Lemma~\ref{lem2} we get
\begin{multline*}
\EE(\psi)-\EE(\PP(\psi,v))=\dd(\psi,\PP(\psi,v))\geq \dd(\PP(\psi,u),\PP(\PP(\psi,v),u))=\\
\EE(\PP(\psi,u))-\EE(\PP(\psi,v,u))\geq \EE(\PP(\psi,u))-\EE(\PP(u,v)).
\end{multline*}
The last inequality is motivated by the fact
\[
\PP(\psi,v,u)\leq \PP(u,v) \Rightarrow \EE(\PP(\psi,v,u))\leq \EE(\PP(u,v)).
\]

To prove completeness of the space $(\mathcal E_{1,m},\dd)$, let $\{u_j\}\subset \mathcal E_{1,m}$ be a Cauchy sequence. After picking a subsequence we may assume that $\dd(u_j,u_{j+1})\leq \frac {1}{2^j}$ for $j\in \mathbb N$. Let $v_{j,k}=\PP(u_j,\dots,u_{k})$, for $k\geq j$. Note that $v_{j,k}\in \mathcal E_{1,m}$ and $v_{j,k}\leq v_{j+1,k}$. Thanks to Lemma~\ref{lem2} we get
\begin{multline*}
\dd(u_j,v_{j,k})=\dd(u_j,\PP(u_j,v_{j+1,k}))=\dd(\PP(u_j,u_j),\PP(u_j,v_{j+1,k}))\leq\dd(u_j,v_{j+1,k})\leq \\
\dd(u_j,u_{j+1})+\dd(u_{j+1},v_{j+1,k}).
\end{multline*}
Iterating the above argument we obtain
\begin{equation}\label{9}
\dd(u_j,v_{j,k})\leq \sum_{s=1}^{k-j}\dd(u_{j+s-1},u_{j+s})\leq \sum_{s=1}^{k-j}\frac {1}{2^{j+s-1}}\leq \sum_{s=j}^{\infty}\frac {1}{2^s}=\frac {1}{2^{j-1}}.
\end{equation}
The sequence $v_{j,k}$ is decreasing in $k$, and it follows from (\ref{9}), that $v_j=\lim_{k\to \infty}v_{j,k}\in \mathcal E_{1,m}$, since
\begin{multline*}
\frac {1}{m+1}e_1(v_{j,k})=\dd(0,v_{j,k})\leq \dd(0,u_1)+\sum_{k=1}^{j-1}\dd(u_j,u_{j+1})+\dd(u_j,v_{j,k})\leq \\
\dd(0,u_1)+1+\frac {1}{2^{j-1}}.
\end{multline*}

Therefore by Proposition~\ref{basic} we have  $\dd(u_j,v_j)\leq \frac {1}{2^{j-1}}$. Furthermore, $v_j$ is an increasing sequence.  Set $u=(\lim_{j\to \infty}v_j)^*\in \mathcal E_{1,m}$. Proposition~\ref{basic} concludes this proof since
\[
\dd(u_j,u)\leq \dd(u_j,v_j)+\dd(v_j,u)\leq \frac {1}{2^{j-1}}+\dd(v_j,u)\to 0, \quad \text{ as } j\to \infty.
\]

\end{proof}

\section{Convergence in the space $(\mathcal E_{1,m},\dd)$}\label{sec_conv}

In this section, we prove some convergence result for the metric $\dd$.  We shall always assume that $\Omega$ is a bounded $m$-hyperconvex domain and weight $w\in \mathcal E_{1,m}$. We start with proving that metric $\dd$ is continuous under monotone sequences.

\begin{proposition}\label{monotone} Let $1\leq m\leq n$, and let $\Omega$ be a bounded $m$-hyperconvex domain in $\mathbb C^n$, $n>1$. Let $u_j, v_j\in \mathcal E_{1,m}$ be decreasing or increasing sequences converging to $u,v \in\mathcal E_{1,m}$, respectively. Then $\dd(u_j,v_j)\to \dd(u,v)$, as $j\to \infty$.
\end{proposition}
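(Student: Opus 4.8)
The plan is to reduce the statement to the known monotone convergence for the energy functional $\EE$ (Proposition~\ref{basic}~(4)--(5)) together with control of the rooftop envelopes $\PP(u_j,v_j)$. By definition
\[
\dd(u_j,v_j)=\EE(u_j)+\EE(v_j)-2\EE(\PP(u_j,v_j)),
\]
so it suffices to show $\EE(u_j)\to \EE(u)$, $\EE(v_j)\to \EE(v)$, and $\EE(\PP(u_j,v_j))\to \EE(\PP(u,v))$. The first two limits are immediate from Proposition~\ref{basic}~(4) in the decreasing case and (5) in the increasing case, so the whole difficulty is concentrated in the third limit, i.e. in showing that the rooftop envelope behaves continuously along monotone sequences.

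First I would treat the two monotone regimes separately. If $u_j\searrow u$ and $v_j\searrow v$, then $\min(u_j,v_j)\searrow \min(u,v)$ and hence $\PP(u_j,v_j)$ is a decreasing sequence of functions in $\mathcal E_{1,m}$; its limit $\varphi:=\lim_j \PP(u_j,v_j)$ is $m$-subharmonic (or $\equiv -\infty$, but it dominates $\PP(u,v)$ so it is not), satisfies $\varphi\le \min(u,v)$, and dominates every competitor $\psi\le\min(u,v)$ since such $\psi\le \min(u_j,v_j)$ forces $\psi\le \PP(u_j,v_j)$; taking the supremum and usc-regularization gives $\varphi=\PP(u,v)$ (no regularization is needed since a decreasing limit of usc functions is usc). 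Then Proposition~\ref{basic}~(4) applied to $\PP(u_j,v_j)\searrow \PP(u,v)$ gives $\EE(\PP(u_j,v_j))\to \EE(\PP(u,v))$, and we are done in this case. If instead $u_j\nearrow u$ and $v_j\nearrow v$ (with $u,v$ the usc regularizations of the limits, which are already in $\mathcal E_{1,m}$ by hypothesis), then $\PP(u_j,v_j)$ is increasing; set $\varphi=(\lim_j\PP(u_j,v_j))^*$. Clearly $\varphi\le \PP(u,v)$. For the reverse inclusion, note that $\PP(u,v)\le \min(u,v)$ and one would like to approximate $\PP(u,v)$ from below by competitors against $\min(u_j,v_j)$; here I would instead argue at the level of energies, using Proposition~\ref{basic}~(3): since $\PP(u_j,v_j)\le \PP(u_{j+1},v_{j+1})\le\PP(u,v)$, the sequence $\EE(\PP(u_j,v_j))$ increases to $\EE(\varphi)$ by Proposition~\ref{basic}~(5), and it remains to identify $\varphi$ with $\PP(u,v)$, or at least to show $\EE(\varphi)=\EE(\PP(u,v))$.

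The identification $\varphi=\PP(u,v)$ in the increasing case is the main obstacle, because the rooftop envelope is not obviously continuous under increasing limits: a competitor $\psi\le\min(u,v)$ need not satisfy $\psi\le\min(u_j,v_j)$. The clean way around this is to exploit that $\Omega$ is $m$-hyperconvex so $\mathcal E_{0,m}$-functions are plentiful: given $\psi\in\mathcal E_{1,m}$ with $\psi\le\min(u,v)$ and any $\varepsilon\in(0,1)$, the function $(1-\varepsilon)\psi + \varepsilon\rho$ (with $\rho\in\mathcal E_{0,m}$ a fixed bounded exhaustion, so $\rho\le 0$) satisfies $(1-\varepsilon)\psi+\varepsilon\rho < \psi \le \min(u,v)$, and one shows $(1-\varepsilon)\psi+\varepsilon\rho\le\min(u_j,v_j)$ for $j$ large on each sublevel set $\{\rho<-\delta\}$ because there $\varepsilon\rho\le -\varepsilon\delta$ while $\min(u_j,v_j)\nearrow\min(u,v)\ge\psi$ — this needs a Dini-type argument, which works on the compact sublevel sets using that the $u_j,v_j$ are continuous or can be taken so after a further approximation. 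Letting $j\to\infty$ gives $(1-\varepsilon)\psi+\varepsilon\rho\le\varphi$ off a pluripolar set, hence everywhere after regularization; letting $\varepsilon\to0$ gives $\psi\le\varphi$. Taking the supremum over all such $\psi$ and regularizing yields $\PP(u,v)\le\varphi$, so $\varphi=\PP(u,v)$, and Proposition~\ref{basic}~(5) finishes the argument. Finally, the mixed cases (one sequence increasing, one decreasing) are handled by the same two ingredients: monotonicity of $\min(u_j,v_j)$ may fail, but one can sandwich, e.g. replace the increasing sequence by the constant limit at the cost of a controlled error, or invoke the already-proved triangle inequality (Theorem~\ref{thm_compmetric}) together with the two pure cases and Proposition~\ref{basic} to estimate $\dd(u_j,v_j)$ between $\dd(u_j,u)+\dd(u,v)+\dd(v,v_j)$-type bounds where the outer terms tend to $0$.
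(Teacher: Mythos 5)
Your reduction to $\EE(\PP(u_j,v_j))\to\EE(\PP(u,v))$ and your treatment of the decreasing case coincide with the paper's and are fine. The genuine gap is in the increasing case. First, the perturbation is already wrong at the outset: $(1-\varepsilon)\psi+\varepsilon\rho\leq\psi$ is equivalent to $\rho\leq\psi$, which fails wherever $\psi<\rho$ (e.g.\ near the poles of an unbounded competitor $\psi\in\mathcal E_{1,m}$, since $\rho$ is bounded), so the perturbed function need not lie below $\min(u,v)$, let alone below $\min(u_j,v_j)$. Second, the Dini step has no footing: $u_j$ and $u$ are merely upper semicontinuous and $u_j\nearrow u$ only quasi-everywhere (off an $m$-polar set, since $u=(\lim_j u_j)^*$), so there is no uniform convergence on compacts; replacing the $u_j$ by continuous functions ``after a further approximation'' changes the problem (continuous approximants from above destroy the increasing structure), and the quasi-continuity route only gives the inequality off a set of small capacity, on which one cannot certify that the perturbed function is a competitor without already controlling $\HH$ of the envelope there. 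Third, the region $\{\rho\geq-\delta\}$ near $\partial\Omega$ is never handled. The paper's proof of exactly this step is measure-theoretic, not topological: it applies the minimum principle (Theorem~\ref{mp}) to see that $\HH(\psi)$, for $\psi=(\lim_j\PP(u_j,v_j))^*$, is carried by $\{\psi=u\}\cup\{\psi=v\}$ and dominated there by $\HH(u)+\HH(v)$, deduces $\HH(\psi)(\{\psi<\PP(u,v)\})=0$, and concludes $\psi\geq\PP(u,v)$ from the domination principle (Proposition~\ref{dp}). That pair of tools is the missing idea in your argument.

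Ironically, the remark you relegate to the mixed case already proves the whole proposition, and more simply than the paper does. For monotone sequences $u_j$ and $u$ are comparable, so $\dd(u_j,u)=|\EE(u_j)-\EE(u)|\to0$ by Proposition~\ref{basic}\,(2),(4),(5) — no envelope analysis needed — and then two applications of the triangle inequality from Theorem~\ref{thm_compmetric} (which is established earlier and independently of Proposition~\ref{monotone}) give
\[
|\dd(u_j,v_j)-\dd(u,v)|\leq \dd(u_j,u)+\dd(v_j,v)\to0 .
\]
Had you led with this, your proof would be complete, shorter than the paper's, and would cover mixed monotonicity as well; as written, your main argument does not close.
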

\begin{proof}
From Proposition~\ref{basic} it follows that $\EE(u_j)\to \EE(u)$, and $\EE(v_j)\to \EE(v)$, as $j\to \infty$. Hence, it is sufficient to show that $\EE(\PP(u_j,v_j))\to \EE(\PP(u,v))$, as $j\to \infty$. If $u_j$, $v_j$ are monotone sequences, then $\PP(u_j,v_j)$ is also monotone sequence.  Observe also that $\PP(u_j,v_j)\leq u_j, v_j$ and then by letting $j\to \infty$ we get
\[
(\lim_{j\to \infty}P(u_j,v_j))^*\leq u, v\, .
\]
Hence,
\[
\left(\lim_{j\to \infty}P(u_j,v_j)\right)^*\leq \PP(u,v).
\]
If the sequences are decreasing then $\PP(u_j,v_j)\geq \PP(u,v)$, and the proof is finished. Now assume that $u_j, v_j$ are increasing sequences, and let $(\lim_{j\to \infty}P(u_j,v_j))^*=\psi$. We are going to show that $\psi=\PP(u,v)$. Thanks to Theorem~\ref{mp} we get, for $k\leq j$,
\begin{multline*}
\HH(\PP(u_j,v_j))\leq \chi_{\{\psi\geq u_j\}}\HH(u_j)+\chi_{\{\psi\geq v_j\}}\HH(v_j)\leq \\
\chi_{\{\psi\geq u_k\}}\HH(u_j)+\chi_{\{\psi\geq v_k\}}\HH(v_j).
\end{multline*}
Since $\HH(u_j)\to \HH(u)$,  $\HH(v_j)\to \HH(v)$,  and $\HH(\PP(u_j,v_j))\to \HH(\psi)$, weakly as $j\to \infty$, we arrive at
\[
\HH(\psi)\leq \chi_{\{\psi\geq u_k\}}\HH(u)+\chi_{\{\psi\geq v_k\}}\HH(v).
\]
Hence, if $k\to \infty$, then
\begin{multline*}
0\leq \int_{\Omega}(\PP(u,v)-\psi)\HH(\psi)\leq \int_{\{\psi=u\}}(\PP(u,v)-\psi)\HH(u)+\\
\int_{\{\psi=v\}}(\PP(u,v)-\psi)\HH(v)\leq 0.
\end{multline*}
Now since $\HH(\psi)(\{\psi<\PP(u,v)\})=0$, then by Proposition~\ref{dp} we obtain $\PP(u,v)\leq \psi$. Hence, $\PP(u,v)=\psi$.
\end{proof}

\begin{remark}
Note that if $u_1\geq u_2$, then
\begin{equation}\label{ineq}
\II(u_1,u_2)\leq \int_{\Omega}(u_1-u_2)\HH(u_2)\leq (m+1)\dd(u_1,u_2),
\end{equation}
where $\II$ is the Aubin $\II$-functional (see page~\pageref{Aubin} for the definition).
\end{remark}

\begin{theorem}\label{thm est1} Let $1\leq m\leq n$, and let $\Omega$ be a bounded $m$-hyperconvex domain in $\mathbb C^n$, $n>1$. Fix a constant $C>0$ and let $\varphi_1,\varphi_2, \psi_1,\psi_2\in \mathcal E_{1,m}$ be such that $e_1(\varphi_1), e_1(\varphi_2), e_1(\psi_1), e_1(\psi_2)\leq C$.
\begin{enumerate}
\item Then there exists a constant $D$,  depending only on $C$, such that
\begin{equation}\label{est4}
\left|\int_{\Omega}(\varphi_1-\varphi_2)(\HH(\psi_1)-\HH(\psi_2))\right|\leq D\dd(\psi_1,\psi_2)^{\frac 12},
\end{equation}
where constant $D$ depends only on $C$.
\item Then there exists a continuous increasing function $h_C:[0,\infty)\to [0,\infty)$, depending only on $C$, with $h_C(0)=0$ and such that
\begin{equation}\label{est5}
\left|\int_{\Omega}(\varphi_1-\varphi_2)(\HH(\psi_1)-\HH(\psi_2))\right|\leq h_C\left(\dd(\varphi_1,\varphi_2)\right).
\end{equation}
\end{enumerate}

\end{theorem}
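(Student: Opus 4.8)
The plan is to reduce both inequalities to Proposition~\ref{est2} by inserting rooftop envelopes as intermediate points, thereby replacing the non-monotone pairs occurring there with monotone ones, and then passing from the Aubin $\II$-functional to the metric $\dd$ by means of inequality~(\ref{ineq}) together with the additivity $\dd(u,v)=\dd(u,\PP(u,v))+\dd(v,\PP(u,v))$ of Proposition~\ref{prop1}(4). A point used repeatedly is the following uniform energy bound for envelopes: if $e_1(\psi_1),e_1(\psi_2)\le C$ and $\phi=\PP(\psi_1,\psi_2)$, then, using the triangle inequality, $\dd(\psi_1,\phi)\le\dd(\psi_1,\psi_2)$ (Proposition~\ref{prop1}(4)), and $\dd(u,0)=\tfrac1{m+1}e_1(u)$,
\[
\tfrac1{m+1}e_1(\phi)=\dd(0,\phi)\le \dd(0,\psi_1)+\dd(\psi_1,\psi_2)\le 2\dd(0,\psi_1)+\dd(0,\psi_2)\le \tfrac{3C}{m+1},
\]
so $e_1(\phi)\le 3C$. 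Hence Proposition~\ref{est2}, applied with the constant $3C$ to any configuration built from $\psi_1,\psi_2,\varphi_1,\varphi_2$ and their rooftop envelopes, yields constants and functions depending on $C$ alone.

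For~(\ref{est4}), set $\phi=\PP(\psi_1,\psi_2)$ and split
\[
\int_{\Omega}(\varphi_1-\varphi_2)(\HH(\psi_1)-\HH(\psi_2))=\int_{\Omega}(\varphi_1-\varphi_2)(\HH(\psi_1)-\HH(\phi))+\int_{\Omega}(\varphi_1-\varphi_2)(\HH(\phi)-\HH(\psi_2)).
\]
Proposition~\ref{est2}(1) bounds the two summands by $D_1\II(\psi_1,\phi)^{1/2}$ and $D_1\II(\psi_2,\phi)^{1/2}$, where $D_1$ depends only on $C$. Since $\psi_1\ge\phi$ and $\psi_2\ge\phi$, inequality~(\ref{ineq}) gives $\II(\psi_i,\phi)\le(m+1)\dd(\psi_i,\phi)$, and using $a^{1/2}+b^{1/2}\le\sqrt2\,(a+b)^{1/2}$ together with Proposition~\ref{prop1}(4),
\[
D_1\bigl(\II(\psi_1,\phi)^{1/2}+\II(\psi_2,\phi)^{1/2}\bigr)\le D_1\sqrt{2(m+1)}\,\bigl(\dd(\psi_1,\phi)+\dd(\psi_2,\phi)\bigr)^{1/2}=D_1\sqrt{2(m+1)}\,\dd(\psi_1,\psi_2)^{1/2},
\]
which is~(\ref{est4}).

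For~(\ref{est5}), by Proposition~\ref{est2}(2) it suffices to dominate $\II(\varphi_1,\varphi_2)$ by a continuous increasing function of $\dd(\varphi_1,\varphi_2)$ that vanishes at $0$ and depends only on $C$. Put $\phi=\PP(\varphi_1,\varphi_2)$ and substitute $\varphi_1-\varphi_2=(\varphi_1-\phi)-(\varphi_2-\phi)$ and $\HH(\varphi_2)-\HH(\varphi_1)=(\HH(\varphi_2)-\HH(\phi))-(\HH(\varphi_1)-\HH(\phi))$ into $\II(\varphi_1,\varphi_2)=\int_\Omega(\varphi_1-\varphi_2)(\HH(\varphi_2)-\HH(\varphi_1))$; the two ``diagonal'' terms reassemble into $\II(\varphi_1,\phi)$ and $\II(\varphi_2,\phi)$, giving
\[
\II(\varphi_1,\varphi_2)=\II(\varphi_1,\phi)+\II(\varphi_2,\phi)+\int_{\Omega}(\varphi_1-\phi)(\HH(\varphi_2)-\HH(\phi))+\int_{\Omega}(\varphi_2-\phi)(\HH(\varphi_1)-\HH(\phi)).
\]
The first two terms are at most $(m+1)\dd(\varphi_i,\phi)$ by~(\ref{ineq}); the two cross terms are bounded, via Proposition~\ref{est2}(1), by $D_1\II(\varphi_2,\phi)^{1/2}$ and $D_1\II(\varphi_1,\phi)^{1/2}$, hence by $D_1\bigl((m+1)\dd(\varphi_i,\phi)\bigr)^{1/2}$, again using~(\ref{ineq}). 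Summing and invoking Proposition~\ref{prop1}(4) as before yields
\[
\II(\varphi_1,\varphi_2)\le (m+1)\dd(\varphi_1,\varphi_2)+D_1\sqrt{2(m+1)}\,\dd(\varphi_1,\varphi_2)^{1/2}=:g_C\bigl(\dd(\varphi_1,\varphi_2)\bigr),
\]
and~(\ref{est5}) follows with $h_C=f_C\circ g_C$, $f_C$ being the function from Proposition~\ref{est2}(2). The integrations by parts and Cauchy--Schwarz estimates that bring $\II$ into play are entirely absorbed into Proposition~\ref{est2}, which is used as a black box; the only delicate points are the uniform energy estimate on rooftop envelopes above (without it the constants would not depend on $C$ alone) and the algebraic identity expanding $\II(\varphi_1,\varphi_2)$ around $\PP(\varphi_1,\varphi_2)$ — this is what replaces the uncontrolled pair $(\varphi_1,\varphi_2)$ by the monotone pairs $(\varphi_i,\PP(\varphi_1,\varphi_2))$ to which~(\ref{ineq}) applies, and is the step that makes $\II$ comparable to $\dd$.
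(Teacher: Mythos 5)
Your argument is correct. For part (1) it coincides with the paper's proof: you split $\HH(\psi_1)-\HH(\psi_2)$ through the envelope $\PP(\psi_1,\psi_2)$, apply Proposition~\ref{est2}(1) at the energy level $3C$ (justified by the same bound $e_1(\PP(\psi_1,\psi_2))\le 3C$ that the paper derives), and pass from $\II$ to $\dd$ via (\ref{ineq}) and Proposition~\ref{prop1}(4). For part (2) you take a genuinely different route. The paper splits the \emph{other} factor, writing $\varphi_1-\varphi_2=(\varphi_1-\PP(\varphi_1,\varphi_2))+(\PP(\varphi_1,\varphi_2)-\varphi_2)$ and applying Proposition~\ref{est2}(2) to each monotone piece, so the resulting terms $\II(\varphi_i,\PP(\varphi_1,\varphi_2))$ are immediately controlled by $(m+1)\dd(\varphi_1,\varphi_2)$; this gives $h_C(t)=2f_{3C}((m+1)t)$ at once and never requires controlling $\II(\varphi_1,\varphi_2)$ itself. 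You instead apply Proposition~\ref{est2}(2) once to the original integral and then prove the separate comparison $\II(\varphi_1,\varphi_2)\le (m+1)\dd(\varphi_1,\varphi_2)+D_1\sqrt{2(m+1)}\,\dd(\varphi_1,\varphi_2)^{1/2}$ by expanding $\II$ exactly around the rooftop envelope; your identity is correct (the two diagonal terms do reassemble into $\II(\varphi_1,\PP(\varphi_1,\varphi_2))$ and $\II(\varphi_2,\PP(\varphi_1,\varphi_2))$), and the cross terms are legitimately handled by Proposition~\ref{est2}(1) applied to the monotone pair $(\varphi_i,\PP(\varphi_1,\varphi_2))$. Your version is slightly longer but yields, as a byproduct, an explicit modulus of continuity of the Aubin functional $\II$ with respect to $\dd$ on energy-bounded sets --- a comparison the paper only records for monotone pairs in (\ref{ineq}) --- whereas the paper's version is shorter and keeps every estimate inside the black box of Proposition~\ref{est2}.
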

\begin{proof} From Lemma~\ref{lem2} it follows
\begin{multline*}
\frac {1}{m+1}e_1(\PP(\varphi_1,\varphi_2))=\dd(\PP(\varphi_1,\varphi_2),0)\leq \dd(\varphi_1,0)+\dd(\PP(\varphi_1,\varphi_2),\varphi_1)\leq \\
\dd(\varphi_1,0)+\dd(\varphi_2,\varphi_1)\leq \frac {3C}{m+1}.
\end{multline*}

(1). The assumptions of Proposition~\ref{est2} are all fulfilled with the constant $3C$, and therefore by (\ref{ineq})
\begin{multline*}
\left|\int_{\Omega}(\varphi_1-\varphi_2)(\HH(\psi_1)-\HH(\psi_2))\right|\leq \left|\int_{\Omega}(\varphi_1-\varphi_2)(\HH(\psi_1)-\HH(\PP(\psi_1,\psi_2)))\right|+\\
\left|\int_{\Omega}(\varphi_1-\varphi_2)(\HH(\PP(\psi_1,\psi_2))-\HH(\psi_2))\right|\leq D_1\II(\psi_1,\PP(\psi_1,\psi_2))^{\frac 12}+\\
D_2\II(\psi_2,\PP(\psi_1,\psi_2))^{\frac 12}\leq D_1(m+1)^{\frac 12}\dd(\psi_1,\PP(\psi_1,\psi_2))^{\frac 12}+\\
D_2(m+1)^{\frac 12}\dd(\psi_2,\PP(\psi_1,\psi_2))^{\frac 12}\leq \tilde D\dd(\psi_1,\psi_2)^{\frac 12},
\end{multline*}
where the constant $\tilde D$ depends only on $C$.

(2). The assumptions of Proposition~\ref{est2} are fulfilled, with the constant $3C$, and therefore by (\ref{ineq})
\begin{multline*}
\left|\int_{\Omega}(\varphi_1-\varphi_2)(\HH(\psi_1)-\HH(\psi_2))\right|\leq \left|\int_{\Omega}(\varphi_1-\PP(\varphi_1,\varphi_2))(\HH(\psi_1)-\HH(\psi_2))\right|+\\
\left|\int_{\Omega}(\PP(\varphi_1,\varphi_2)-\varphi_2)(\HH(\psi_1)-\HH(\psi_2))\right|\leq f_{3C}\left(\II(\varphi_1,\PP(\varphi_1,\varphi_2))\right)+\\
f_{3C}\left(\II(\varphi_2,\PP(\varphi_1,\varphi_2))\right)\leq f_{3C}\left((m+1)\dd(\varphi_1,\PP(\varphi_1,\varphi_2))\right)+\\
f_{3C}\left((m+1)\dd(\varphi_2,\PP(\varphi_1,\varphi_2))\right)\leq 2f_{3C}\left((m+1)\dd(\varphi_1,\varphi_2)\right)=h_{C}\left(\dd(\varphi_1,\varphi_2)\right),
\end{multline*}
where $h_C(t)=2f_{3C}((m+1)t)$.
\end{proof}

\begin{corollary}\label{cor} Let $1\leq m\leq n$, and let $\Omega$ be a bounded $m$-hyperconvex domain in $\mathbb C^n$, $n>1$. Let $u_j,u\in \mathcal E_{1,m}$. If $\lim_{j\to \infty}\dd(u_j,u)=0$,  then $u_j\to u$ in $L^1(\Omega)$, and $\HH(u_j)\to \HH(u)$ weakly, as  $j\to \infty$.
\end{corollary}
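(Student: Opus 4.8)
The plan is to deduce the corollary directly from Theorem~\ref{thm est1}, exploiting that $d$-convergence forces the energies $e_1(u_j)$ to be uniformly bounded. First I would observe that since $d(u_j,u)\to 0$, the sequence $d(u_j,0)$ is bounded; indeed $d(u_j,0)\leq d(u_j,u)+d(u,0)$, and by Proposition~\ref{prop1}(7) this means $e_1(u_j)=(m+1)d(u_j,0)\leq C$ for some constant $C>0$ independent of $j$ (we may also assume $e_1(u)\leq C$). So all the functions $u_j$ and $u$ lie in the ``energy ball'' to which Theorem~\ref{thm est1} applies.

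Next I would prove the weak convergence of the Hessian measures. Fix $\alpha\in\mathcal E_{0,m}$. Since $\mathcal C^\infty_0(\Omega)\subset\mathcal E_{0,m}-\mathcal E_{0,m}$ (by~\cite{L,L3}), it is enough to show $\int_\Omega\alpha(\HH(u_j)-\HH(u))\to 0$. Applying Theorem~\ref{thm est1}(1) with $\varphi_1=\alpha$, $\varphi_2=0$ (after enlarging $C$ to also bound $e_1(\alpha)$), $\psi_1=u_j$, $\psi_2=u$, we get
\[
\left|\int_\Omega\alpha\,(\HH(u_j)-\HH(u))\right|\leq D\,d(u_j,u)^{1/2}\to 0,
\]
where $D$ depends only on $C$ and $e_1(\alpha)$. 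This gives $\HH(u_j)\to\HH(u)$ weakly.

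For the $L^1$-convergence, I would use the bound $e_1(u_j)\leq C$ together with the general fact that the set $\{\varphi\in\mathcal E_{1,m}: e_1(\varphi)\leq C\}$ is relatively compact in $L^1(\Omega)$; hence every subsequence of $\{u_j\}$ has a further subsequence converging in $L^1(\Omega)$ (and pointwise a.e., and its upper semicontinuous regularization is $m$-subharmonic) to some limit $\tilde u\in\mathcal E_{1,m}$. Along such a subsequence, the weak convergence $\HH(u_j)\to\HH(u)$ just established, combined with the continuity of $\HH$ under $L^1$-convergence within a bounded-energy family, forces $\HH(\tilde u)=\HH(u)$, and then the comparison principle (or the domination principle, Proposition~\ref{dp}) gives $\tilde u=u$. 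Since every subsequence has a further subsequence converging in $L^1$ to the same limit $u$, the whole sequence $u_j\to u$ in $L^1(\Omega)$.

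The main obstacle I anticipate is the $L^1$-compactness and the identification of the subsequential limit: strictly speaking, relative $L^1$-compactness of bounded-energy families and the continuity of $\HH$ along $L^1$-convergent bounded-energy sequences are standard in this Caffarelli–Nirenberg–Spruck setting (cf.~\cite{L,L3}), but one must be slightly careful because $L^1$-convergence of $m$-subharmonic functions only controls them up to sets of measure zero, so the passage to the upper semicontinuous regularization and the use of the domination principle to pin down $\tilde u = u$ is the delicate point. The weak convergence of Hessians, by contrast, is immediate from Theorem~\ref{thm est1} once the uniform energy bound is in place.
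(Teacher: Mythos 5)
Your uniform energy bound and your proof of the weak convergence $\HH(u_j)\to\HH(u)$ are correct and coincide with the paper's argument: one applies estimate (\ref{est4}) of Theorem~\ref{thm est1} with $\varphi_1=\alpha\in\mathcal E_{0,m}$, $\varphi_2=0$, $\psi_1=u_j$, $\psi_2=u$, and then uses $\mathcal C^{\infty}_0(\Omega)\subset\mathcal E_{0,m}-\mathcal E_{0,m}$.

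The $L^1$ part, however, has a genuine gap. Your identification of the subsequential limit rests on ``the continuity of $\HH$ under $L^1$-convergence within a bounded-energy family'', and this is not an available fact: for $m\geq 2$ the complex Hessian operator is genuinely nonlinear and is \emph{not} weakly continuous along $L^1$-convergent sequences, even under a uniform bound on $e_{1,m}$. (It is convergence in capacity, or monotone convergence, that yields weak continuity of $\HH$; the failure of continuity under $L^1$-convergence is precisely why the quantitative estimates of Proposition~\ref{est2} and Theorem~\ref{thm est1} are developed at all.) You flag this as the delicate point yourself, but the argument as written does not close it, and the whole compactness-and-subsequence scheme is unnecessary. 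The direct route is to apply estimate (\ref{est5}) to the functions rather than to the measures: choose $\psi_1\in\mathcal E_{0,m}$ with $\HH(\psi_1)=dV_{2n}$ and $\psi_2=0$, and apply (\ref{est5}) with $\varphi_1=u_j$, $\varphi_2=\PP(u_j,u)$, and then with $\varphi_1=u$, $\varphi_2=\PP(u_j,u)$. Since $u_j\geq\PP(u_j,u)$ and $u\geq\PP(u_j,u)$, this gives
\[
\int_{\Omega}|u_j-u|\,dV_{2n}\leq\int_{\Omega}(u_j-\PP(u_j,u))\,dV_{2n}+\int_{\Omega}(u-\PP(u_j,u))\,dV_{2n}\leq h_C\big(\dd(u_j,\PP(u_j,u))\big)+h_C\big(\dd(u,\PP(u_j,u))\big),
\]
and by Proposition~\ref{prop1}(4) both $\dd(u_j,\PP(u_j,u))$ and $\dd(u,\PP(u_j,u))$ are bounded by $\dd(u_j,u)\to 0$. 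This yields $u_j\to u$ in $L^1(\Omega)$ with no extraction of subsequences and no appeal to continuity properties of $\HH$.
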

\begin{proof}
To prove the first part it is enough to apply Theorem~\ref{thm est1} (\ref{est5}) with $\psi_2=0$ and $\psi_1$ such that $\HH(\psi_1)=dV_{2n}$, where $dV_{2n}$ is the Lebesgue measure in $\mathbb C^n$.
The second part follows from (\ref{est4}), and the fact that $\mathcal C^{\infty}_0(\Omega)\subset \mathcal E_{0,m}-\mathcal E_{0,m}$ (see~\cite{cegrell_bdd}).
\end{proof}

Finally, we can prove the following characterization of the convergence with respect to the metric $\dd$.

\begin{theorem}\label{thm_conv} Let $1\leq m\leq n$, and let $\Omega$ be a bounded $m$-hyperconvex domain in $\mathbb C^n$, $n>1$. Let $u_j,u\in \mathcal E_{1,m}$. Then $\dd(u_j,u)\to 0$ as $j\to \infty$ if, and only if, $u_j\to u$ in $L^1(\Omega)$ and $\EE(u_j)\to \EE(u)$, as $j\to \infty$.
\end{theorem}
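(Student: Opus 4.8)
The plan is to prove the two implications separately; the forward one is short given the tools already at hand, while the reverse one carries the real content.

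For the forward implication, assume $\dd(u_j,u)\to 0$. Corollary~\ref{cor} immediately gives $u_j\to u$ in $L^1(\Omega)$, so only $\EE(u_j)\to\EE(u)$ is left to check. I would split $\dd(u_j,u)=\dd(u_j,\PP(u_j,u))+\dd(u,\PP(u_j,u))$ by Proposition~\ref{prop1}(4); both summands are nonnegative, hence both tend to $0$. Since $\PP(u_j,u)\le u_j$ and $\PP(u_j,u)\le u$, Proposition~\ref{prop1}(2) identifies these summands as $\EE(u_j)-\EE(\PP(u_j,u))$ and $\EE(u)-\EE(\PP(u_j,u))$; subtracting the two yields $\EE(u_j)-\EE(u)\to 0$.

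For the reverse implication, assume $u_j\to u$ in $L^1(\Omega)$ and $\EE(u_j)\to\EE(u)$. The first step is to note that, putting $v=0$ in Proposition~\ref{basic}(1), one gets $\EE(u_j)+\frac{1}{m+1}e_1(u_j)=\EE(0)$, so the convergence of $\EE(u_j)$ forces $\sup_j e_1(u_j)<\infty$. Next, by the usual ``a subsequence of every subsequence'' device it suffices to treat the case where, in addition, $u_j\to u$ almost everywhere, so I would pass to such a subsequence and argue there. The idea is then to squeeze $u_j$ between itself and a decreasing sequence converging to $u$: for $N\ge j$ set $\phi_j^{N}=\max(u_j,\dots,u_N)\in\mathcal E_{1,m}$; since $\phi_j^{N}\ge u_j$, monotonicity of $e_1$ (which comes out of Proposition~\ref{basic}: $a\le b$ forces $0\le\dd(a,b)=\EE(b)-\EE(a)$, i.e.\ $e_1(b)\le e_1(a)$) gives $e_1(\phi_j^{N})\le e_1(u_j)$ uniformly in $N$. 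Hence $\phi_j:=(\sup_{k\ge j}u_k)^{*}=(\lim_{N\to\infty}\phi_j^{N})^{*}$ lies in $\mathcal E_{1,m}$ by the standard stability of this class under increasing limits of uniformly bounded energy (cf.~\cite{L,L3}), and $u_j\le\phi_j$. The sequence $\phi_j$ decreases in $j$, and equals $\sup_{k\ge j}u_k$ off an $m$-polar set, so $\lim_j\phi_j=\limsup_k u_k=u$ a.e.; being a decreasing limit of $m$-subharmonic functions it is itself $m$-subharmonic and therefore equals $u$. Thus $\phi_j\searrow u$, and Proposition~\ref{basic}(4) gives $\EE(\phi_j)\to\EE(u)$. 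The conclusion is then immediate: by the triangle inequality (Theorem~\ref{thm_compmetric}) and Proposition~\ref{prop1}(2) (note $u\le\phi_j$ since $\phi_j\searrow u$),
\[
\dd(u_j,u)\le\dd(u_j,\phi_j)+\dd(\phi_j,u)=\bigl(\EE(\phi_j)-\EE(u_j)\bigr)+\bigl(\EE(\phi_j)-\EE(u)\bigr),
\]
and both terms on the right tend to $0$ as $j\to\infty$, since $\EE(\phi_j)\to\EE(u)$ and $\EE(u_j)\to\EE(u)$ by hypothesis.

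I expect the only delicate point to be the construction of the majorant sequence $\phi_j$ together with its elementary properties: that $\mathcal E_{1,m}$ is closed under finite maxima and under increasing limits with uniformly bounded energy, that $(\sup_{k\ge j}u_k)^{*}$ agrees with $\sup_{k\ge j}u_k$ off an $m$-polar set, and the reduction to an almost-everywhere convergent subsequence. This is also exactly where the energy hypothesis is used, since it is what forces $\EE(\phi_j)-\EE(u_j)\to 0$; everything else is formal bookkeeping with the properties of $\dd$ and $\EE$ proved earlier.
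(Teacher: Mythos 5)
Your proof is correct, and for the substantive direction (that $L^1$ plus energy convergence implies $\dd$-convergence) it is essentially the paper's argument: squeeze $u_j$ under the decreasing majorants $(\sup_{k\ge j}u_k)^*\searrow u$ and use the triangle inequality together with Proposition~\ref{basic}; you simply spell out the standard facts about this majorant that the paper takes for granted. In the forward direction you deviate slightly, and to good effect: instead of reusing the majorant sequence and invoking Proposition~\ref{monotone}, you split $\dd(u_j,u)=\dd(u_j,\PP(u_j,u))+\dd(u,\PP(u_j,u))$ into two nonnegative terms whose difference is exactly $\EE(u_j)-\EE(u)$, which gives the energy convergence without using the $L^1$ convergence at all and is arguably cleaner than the paper's route.
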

\begin{proof} First assume that $u_j\to u$ in $L^1(\Omega)$, and $\EE(u_j)\to \EE(u)$, as $j\to \infty$. Let us define $v_j=\left(\sup_{k\geq j}u_k\right)^*$, then $v_j\searrow u$, and
therefore by Proposition~\ref{basic} $\EE(v_j)\to \EE(u)$. Finally,
\[
\dd(u_j,u)\leq \dd(u_j,v_j)+\dd(v_j,u)=(\EE(v_j)-\EE(u_j))+(\EE(v_j)-\EE(u))\to 0,
\]
as $j\to \infty$. On the other hand, if $\dd(u_j,u)\to 0$, then by Corollary~\ref{cor} we get $u_j\to u$ in $L^1(\Omega)$. We have also get by Proposition~\ref{basic} and Proposition~\ref{monotone}
\begin{multline*}
\left|\EE(u_j)-\EE(u)\right|\leq \left|\EE(u_j)-\EE(v_j)\right|+\left|\EE(v_j)-\EE(u)\right|=\dd(u_j,v_j)+\dd(v_j,u)\\
\leq \dd(u_j,u)+2\dd(v_j,u)\to 0, \text{ as } j\to \infty.
\end{multline*}
\end{proof}

At the end of this section we prove that convergence in metric $\dd$ implies convergence in capacity. Let us recall that $u_j\to u$ in capacity $\operatorname{cap}_m$ if for any $K\Subset \Omega$ and any $\epsilon>0$
\[
\lim_{j\to \infty}\operatorname{cap}_m\left(K\cap\{z\in \Omega: |u_j(z)-u(z)|>\epsilon\}\right)=0,
\]
where capacity of a Borel set $A\Subset\Omega$ is defined by
\[
\operatorname{cap}_m(A)=\sup\left\{\int_{A}\HH(\varphi): \varphi\in \mathcal {SH}_m(\Omega); \ -1\leq \varphi\leq 0\right\}.
\]

\begin{proposition}\label{cap} Let $1\leq m\leq n$, and let $\Omega$ be a bounded $m$-hyperconvex domain in $\mathbb C^n$, $n>1$. If $\dd(u_j,u)\to 0$, then $u_j\to u$ in capacity $\operatorname {cap}_m$.
\end{proposition}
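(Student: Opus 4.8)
The plan is to estimate $\operatorname{cap}_m$ of the two level sets $\{u_j<u-\epsilon\}$ and $\{u_j>u+\epsilon\}$ separately, by trapping each of them between one of $u,u_j$ and a suitable \emph{monotone} auxiliary sequence manufactured from $\{u_j\}$, and then using the standard fact that a monotone sequence of $m$-subharmonic functions converging to an $m$-subharmonic function converges in $\operatorname{cap}_m$ (a consequence of quasicontinuity and Dini's theorem). First, since it suffices to show that every subsequence of $\{u_j\}$ has a further subsequence converging to $u$ in $\operatorname{cap}_m$, and since a subsequence still has $\dd$-limit $u$, we may pass to a subsequence and assume $\dd(u_j,u_{j+1})\leq 2^{-j}$ for all $j$. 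By Corollary~\ref{cor}, $u_j\to u$ in $L^1(\Omega)$, and by Theorem~\ref{thm_conv}, $\EE(u_j)\to\EE(u)$. Fix $K\Subset\Omega$ and $\epsilon>0$; as $\operatorname{cap}_m$ is subadditive, it is enough to prove $\operatorname{cap}_m\big(K\cap\{u_j>u+\epsilon\}\big)\to 0$ and $\operatorname{cap}_m\big(K\cap\{u_j<u-\epsilon\}\big)\to 0$.

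For the set $\{u_j>u+\epsilon\}$ I put $\overline v_j=\big(\sup_{k\geq j}u_k\big)^*$. Since $\sup_j e_1(u_j)=(m+1)\sup_j\dd(u_j,0)\leq(m+1)\big(\dd(u,0)+\sup_j\dd(u_j,u)\big)<\infty$, each $\overline v_j$ lies in $\E_{1,m}$, and because $u_j\to u$ in $L^1(\Omega)$ the sequence $\overline v_j$ decreases to $u$ --- this is precisely the fact used in the proof of Theorem~\ref{thm_conv}. Hence $\overline v_j\to u$ in $\operatorname{cap}_m$, and since $\overline v_j\geq u_j$ we have $K\cap\{u_j>u+\epsilon\}\subseteq K\cap\{\overline v_j-u>\epsilon\}$, so $\operatorname{cap}_m\big(K\cap\{u_j>u+\epsilon\}\big)\to 0$.

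For the set $\{u_j<u-\epsilon\}$ I reuse the construction from the proof of Theorem~\ref{thm_compmetric}: with $V_{j,N}=\PP(u_j,\dots,u_N)\in\E_{1,m}$, which decreases in $N$, set $\underline v_j=\lim_{N\to\infty}V_{j,N}$. That argument shows $\underline v_j\in\E_{1,m}$, that the sequence $\underline v_j$ increases in $j$ and satisfies $\dd(u_j,\underline v_j)\leq 2^{1-j}$, and that $\big(\lim_j\underline v_j\big)^*$ is a $\dd$-limit of $\{u_j\}$; since $(\E_{1,m},\dd)$ is a metric space, this limit must be $u$, i.e. $\underline v_j\nearrow u$. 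In particular $\underline v_j\leq u$ and $\underline v_j\leq u_j$, and $\underline v_j\to u$ in $\operatorname{cap}_m$ as an increasing sequence of $m$-subharmonic functions with $m$-subharmonic limit. On $\{u_j<u-\epsilon\}$ we have $\underline v_j\leq u_j<u-\epsilon$, whence $K\cap\{u_j<u-\epsilon\}\subseteq K\cap\{u-\underline v_j>\epsilon\}$ and $\operatorname{cap}_m\big(K\cap\{u_j<u-\epsilon\}\big)\to 0$. Adding the two estimates yields $\operatorname{cap}_m\big(K\cap\{|u_j-u|>\epsilon\}\big)\to 0$ for the chosen subsequence, hence for the original sequence.

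The step that needs the most care is the identification of the limits of the two auxiliary sequences with $u$. For $\overline v_j$ it amounts to the Hartogs-type principle that $L^1(\Omega)$-convergence of $m$-subharmonic functions forces $\big(\sup_{k\geq j}u_k\big)^*\searrow(\limsup_j u_j)^*=u$; for $\underline v_j$ it relies on the already-established completeness of $(\E_{1,m},\dd)$, the uniqueness of metric limits, and the bound $\dd(u_j,\underline v_j)\leq 2^{1-j}\to 0$ inherited from the completeness proof. The remaining ingredients --- that the offending level sets sit inside $\{|\,\textrm{auxiliary sequence}-u\,|>\epsilon\}$, and that monotone sequences of $m$-subharmonic functions converge in $\operatorname{cap}_m$ --- are routine.
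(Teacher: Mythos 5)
Your proof is correct, but it follows a genuinely different route from the paper's for half of the argument. Both proofs handle the set $\{u_j>u+\epsilon\}$ the same way, via the decreasing envelopes $(\sup_{k\geq j}u_k)^*\searrow u$ and convergence in capacity of decreasing sequences. The difference is in the set $\{u_j<u-\epsilon\}$: the paper controls the nonnegative gap $v_j-u_j$ quantitatively, combining B\l ocki's inequality $\int_K(v_j-u_j)^{m+1}\operatorname{H}_m(\psi)\leq (m+1)!\|\psi\|_\infty^m\int_\Omega(v_j-u_j)\operatorname{H}_m(u_j)$ with the continuity estimate of Theorem~\ref{thm est1}, which bounds the right-hand side by $h_C(\dd(u_j,v_j))\to 0$; no subsequence is needed and one gets an explicit modulus in terms of $\dd(u_j,v_j)$. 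You instead pass to a subsequence with $\dd(u_j,u_{j+1})\leq 2^{-j}$, recycle the rooftop envelopes $\underline v_j=\lim_N\PP(u_j,\dots,u_N)$ from the completeness proof to sandwich $u_j$ between two monotone sequences both converging to $u$, and invoke convergence in capacity of monotone sequences; this is softer and avoids B\l ocki's inequality and Theorem~\ref{thm est1} altogether. The identification $\underline v_j\nearrow u$ via uniqueness of $\dd$-limits is clean and correct. The one ingredient you should state more carefully is the capacity convergence of the \emph{increasing} sequence $\underline v_j$: quasicontinuity and Dini alone do not suffice there, because the pointwise limit $\lim_j\underline v_j$ is only lower semicontinuous on the good set and may differ from $u=(\lim_j\underline v_j)^*$ on a nonempty exceptional set; one also needs that this negligible set is $m$-polar and that $m$-polar sets have vanishing outer $\operatorname{cap}_m$ (the $m$-subharmonic analogue of Bedford--Taylor's theorem, available in Lu's work). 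That is a standard, citable fact, so your argument stands, but it is a heavier ingredient than your parenthetical suggests and is precisely what the paper's more quantitative route avoids.
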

\begin{proof} Let us define $v_j=(\sup_{k\geq j}u_j)^*$, then $v_j\searrow u$, and $v_j\geq u_j$. Then since $v_j$ is decreasing sequence it follows from~\cite{L3} that $v_j\to u$ in capacity $\operatorname{cap}_m$. Furthermore,
\begin{multline*}
\{z\in \Omega: |u_j(z)-u(z)|>\epsilon\}\subset \left\{z\in \Omega: |v_j(z)-u_j(z)|>\frac {\epsilon}{2}\right\} \cup \\ \left\{z\in \Omega: |v_j(z)-u(z)|>\frac {\epsilon}{2}\right\}.
\end{multline*}
Therefore, it is enough to prove that
\[
\lim_{j\to \infty}\operatorname{cap}_m\left(K\cap\left\{z\in \Omega: |v_j(z)-u_j(z)|>\frac {\epsilon}{2}\right\}\right)=0.
\]
Note also that
\[
\dd(u_j,v_j)\leq \dd(u_j,u)+\dd(v_j,u)\to 0, \ j\to \infty,
\]
and therefore
\begin{multline*}
\frac {1}{m+1}\max(e_1(u_j),e_1(v_j))=\max(\dd(u_j,0),\dd(v_j,0))\leq \\
\max(\dd(u_j,u),\dd(v_j,u))+\dd(u,0)\leq C<\infty.
\end{multline*}
Fix $\psi\in \mathcal E_{0,m}$ such that $-1\leq \psi\leq 0$, and $K\Subset\Omega$. Then thanks to B\l ocki's inequality, see~\cite{L3, thien2, WanWang2}, and Proposition~\ref{est2} we arrive at
\begin{multline*}
\int_{K\cap\{z\in \Omega: |v_j(z)-u_j(z)|>\frac {\epsilon}{2}\}}\HH(\psi)\leq \\
\frac{2^{m+1}}{\epsilon ^{m+1}}\int_{K\cap\{z\in \Omega: |v_j(z)-u_j(z)|>\frac {\epsilon}{2}\}}(v_j-u_j)^{m+1}\HH(\psi)\leq \\
\frac{2^{m+1}}{\epsilon ^{m+1}}\int_{\Omega}(v_j-u_j)^{m+1}\HH(\psi)\leq \\
\frac{2^{m+1}}{\epsilon ^{m+1}}(m+1)!\|\psi\|^m_{\infty}\int_{\Omega}(v_j-u_j)\HH(u_j)\leq \\
\frac{2^{m+1}(m+1)!}{\epsilon ^{m+1}}h_{C}(\dd(u_j,v_j))\to 0, \ j\to \infty.
\end{multline*}
\end{proof}

The reverse implication in Proposition~\ref{cap} is not in general true, even in the case $m=n$. The following example is from~\cite{CAV}.

\begin{example} Let
\[
u_j(z)=\max\left(j^{\frac 1n}\ln |z|,-\frac 1j\right)
\]
be a function defined in the unit ball in $\mathbb C^n$, $n>1$. Then $u_j\in \mathcal E_0$, and
\[
(n+1)\dd(u_j,0)=e_1(u_j)=(2\pi)^n,
\]
but $u_j\to 0$ in capacity.\hfill{$\Box$}
\end{example}

\section{A comparison of topologies}\label{sec_comparison}

To study the space $\mathcal E_{1,m}$ from the normed vector space perspective we define $\delta\mathcal E_{1,m}=\mathcal E_{1,m}-\mathcal E_{1,m}$, since $\mathcal E_{1,m}$ is only a convex cone. Then for any $u\in \delta\mathcal E_{1,m}$ define
\[
\|u\|=\inf_{u_1-u_2=u \atop u_1,u_2\in \mathcal{E}_{1,m}}\left(\int_{\Omega} (-(u_1+u_2))\operatorname{H}_m(u_1+u_2)
\right)^{\frac {1}{m+1}}\, .
\]
It was proved in~\cite{thien} that $(\delta\mathcal{E}_{1,m}, \|\cdot\|)$ is a Banach space (for the case $m=n$ see~\cite{mod}). Furthermore, the cone $\mathcal{E}_{1,m}(\Omega)$ is closed in $\delta\mathcal{E}_{1,m}(\Omega)$. Recall also that if $u\in \mathcal E_{1,m}$, then $\|u\|=e_{1,m}(u)^{\frac {1}{m+1}}$. We shall in this section provide two examples that show that the norm $\|\cdot\|$, and the metric topology generated by $\dd$ are not comparable.

\begin{example}
There is no constant $C>0$ such that $\dd(u,v)\leq C\|u-v\|$. To see this take $w\in \mathcal E_{1,m}$ and $t>0$
\begin{multline*}
\dd(w+tw,tw)=\EE(tw)-\EE((1+t)w)\\ =\frac {1}{m+1}(-e_{1,m}(w))\sum_{j=0}^m((t-1)t^j-t(t+1)^j)\to \infty,
\end{multline*}
as $t\to \infty$, but
\[
\|w+tw-tw\|=\|w\|=e_{1,m}(w)^{\frac {1}{m+1}}<\infty.
\]\hfill{$\Box$}
\end{example}

\begin{example}
There is no constant $C>0$ such that $\|u-v\|\leq C\dd(u,v)$. To see this take a plurisubharmonic functions in the unit ball $\mathbb B$ in $\mathbb C^n$, $n>1$,
\[
u_j(z)=j\max (\log |z|, a_j), \ \ v_j(z)=j\max (\log |z|, c),
\]
where $a_j\searrow c$, $j\to \infty$, and $a_j,c<0$ and the sequence $a_j$ shall be specified later. By~\cite{mod} we have
\[
\|u_j-v_j\|^{n+1}=\|u_j+v_j\|^{n+1}=e_1(u_j+v_j)=(2\pi)^nj^{n+1}((a_j-c)-2^{n+1}a_j)\to \infty,
\]
as $j\to \infty$. On the other hand, note that $u_j\geq v_j$ and therefore
\begin{multline*}
\dd(u_j,v_j)=\EE(u_j)-\EE(v_j)=\frac {1}{n+1}\sum_{k=0}^n\int_{\mathbb B}(u_j-v_j)(dd^cu_j)^k\wedge(dd^cv_j)^{n-k}=\\
\frac {(2\pi)^nj^{n+1}(a_j-c)}{n+1}.
\end{multline*}
Now if we take $a_j=c+j^{-n-2}$, then we get $\dd(u_j,v_j)\to 0$, as $j\to \infty$.\hfill{$\Box$}
\end{example}

\section{Geodesics in the space $(\mathcal E_{1,m},\dd)$}\label{sec_geodesic}

Before studying geodesics in $(\mathcal E_{1,m},\dd)$, let us start recalling some elementary metric geometry. For further information see e.g.~\cite{bh}.

\begin{definition}
Let $(X,\rho)$ be a metric space. A \emph{geodesic} in $(X,\rho)$ connecting two points $x_0,x_1 \in X$ is a continuous map $f: [0,1]\to X$ such that $f(0)=x_0$, $f(1)=x_1$ and
\begin{equation}\label{geod_def}
\rho(f(t_1),f(t_2))=|t_1 -t_2|\rho(x_0,x_1),
\end{equation}
for any $t_1,t_2 \in [0,1]$.
\end{definition}

 For $1\leq m < n$, let $\widehat{\E}_{1,m}$, be the subspace $(\mathcal E_{1,m},\dd)$ of functions that are also $(m+1)$-subharmonic. These functions shall be
   defined on a $(m+1)$-hyperconvex domain $\Omega\subset\C^n$. Let $u_0,u_1\in \widehat{\E}_{1,m}$, and let us define
\begin{multline*}
SG(u_0,u_1)=\Big\{v \text{ is a } (m+1)-\text{subharmonic defined on }  \Omega\times A:\\
v\leq0,\,\limsup_{\lambda\to C_j} u(z,\lambda)\leq u_j(z), j=0,1 \Big\},
\end{multline*}
where $A=\{\lambda\in \mathbb C: 1<|\lambda|<e\}$,
\[
C_0=\{\lambda\in \mathbb C: |\lambda|=1\}, \text{ and } C_1=\{\lambda\in \mathbb C: |\lambda|=e\}.
\]
If we define,
\[
\Psi(z,\lambda)=\sup\{v(z,\lambda): v\in SG(u_0,u_1) \},
\]
then $\Psi\in SG(u_0,u_1)$, and $\Psi$ is a maximal $(m+1)$-subharmonic function defined on $\Omega\times A$. For $t\in [0,1]$,  we shall call
\[
\varphi_t(z)=\Psi(z,e^t)
\]
for a \emph{weak geodesic} joining $u_0$, and $u_1$. From~\cite{ACH} it follows that $\varphi_t$ is $m$-subharmonic function defined on $\Omega$, and then by construction $\varphi_t\in \mathcal E_{1,m}$. Furthermore, we shall also let $\widehat{\E}_{0,m}$, be the subspace $(\mathcal E_{0,m},\dd)$ of functions that are also $(m+1)$-subharmonic on $\Omega$.

\begin{proposition}\label{bp} Let $1\leq m< n$, and let $\Omega$ be a bounded $(m+1)$-hyperconvex domain in $\mathbb C^n$, $n>1$.
Assume that $u_0,v_1\in \mathcal E_{0,m+1}$, and let $\varphi_t$ be a weak geodesic joining $u_0$ and $u_1$. Then
\begin{enumerate}\itemsep2mm

\item for all $t$ function $\varphi_t\in \mathcal E_{0,m}$;

\item $\varphi_t(z)\to 0$, as $z\to \partial \Omega$;

\item $\varphi_t\geq \max (u_0-t\|u_1\|_{\infty}, u_1-(1-t)\|u_1\|_{\infty})$.

\end{enumerate}
\end{proposition}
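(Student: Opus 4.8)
The plan is to establish all three items essentially at once by producing explicit sub- and super-barriers for the geodesic envelope $\Psi$. First I would observe that since $u_0,u_1\in\mathcal E_{0,m+1}$, both functions are bounded, say $\|u_0\|_\infty,\|u_1\|_\infty\le M$; the key linear comparison functions on $\Omega\times A$ will be of the form $(z,\lambda)\mapsto u_j(z)+a(\ln|\lambda|)+b$ with appropriately chosen affine coefficients in $s=\ln|\lambda|\in[0,1]$. Such a function is $(m+1)$-subharmonic on $\Omega\times A$ because $u_j$ is $(m+1)$-subharmonic in $z$ (this is exactly the hypothesis $u_j\in\mathcal E_{0,m+1}$, which we may use here even though a merely $m$-subharmonic function need not pull back to an $(m+1)$-subharmonic function on the product — the difficulty flagged in the introduction) and the added term is pluriharmonic in $\lambda$, hence harmless for the Hessian inequality.

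For item (3), the lower bound, I would check that the function $v(z,\lambda)=\max\bigl(u_0(z)-s\|u_1\|_\infty,\ u_1(z)-(1-s)\|u_1\|_\infty\bigr)$ with $s=\ln|\lambda|$ lies in $SG(u_0,u_1)$. It is $(m+1)$-subharmonic as a max of two such functions; it is $\le 0$ since each $u_j\le 0$ and the subtracted terms are $\ge 0$; and at the boundary circles $C_0$ ($s=0$) and $C_1$ ($s=1$) one has $\limsup v\le u_0$ and $\limsup v\le u_1$ respectively, because at $s=0$ the first slot equals $u_0$ and the second is $u_1-\|u_1\|_\infty\le u_0$ is not automatic — so more carefully, at $s=0$ the second slot is $u_1-\|u_1\|_\infty\le 0\le$ nothing forces it below $u_0$; hence I instead take $v$ to be the max only after verifying each slot separately satisfies the boundary condition, namely $u_0(z)-s\|u_1\|_\infty$ has $\limsup\le u_0$ at $C_0$ and $\le u_0(z)-\|u_1\|_\infty\le u_1(z)$ at $C_1$, while $u_1(z)-(1-s)\|u_1\|_\infty$ has $\limsup\le u_1$ at $C_1$ and $\le u_1(z)-\|u_1\|_\infty\le u_0(z)$ at $C_0$. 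Thus $v\in SG(u_0,u_1)$, so $v\le\Psi$, which upon setting $\lambda=e^t$ gives exactly the claimed inequality $\varphi_t\ge\max(u_0-t\|u_1\|_\infty,\,u_1-(1-t)\|u_1\|_\infty)$.

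For items (1) and (2), I would combine this lower barrier with an upper one. Since every $v\in SG(u_0,u_1)$ satisfies $v\le 0$ and, by the maximum principle applied to $v$ together with the boundary data on $C_0\cup C_1$ and the exhaustion function of the $(m+1)$-hyperconvex domain $\Omega$, one gets $\Psi(z,\lambda)\le \rho(z)$ for a suitable negative $m$-subharmonic exhaustion-type majorant vanishing on $\partial\Omega$; more concretely one bounds $\Psi$ above by the harmonic-in-$s$ interpolation of $0$-boundary data, which tends to $0$ as $z\to\partial\Omega$ uniformly in $t$. Together with the lower barrier from (3) — whose right-hand side also tends to $0$ as $z\to\partial\Omega$ since $u_0,u_1\in\mathcal E_{0,m+1}\subset\mathcal E_{0,m}$ vanish on $\partial\Omega$ and the subtracted constants are fixed — we get $\varphi_t(z)\to 0$ as $z\to\partial\Omega$, which is item (2). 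Boundedness of $\varphi_t$ is immediate from $u_j\in\mathcal E_{0,m+1}$ being bounded and the two barriers; finiteness of $\int_\Omega\operatorname H_m(\varphi_t)$ follows because $\varphi_t$ lies between two functions in $\mathcal E_{0,m}$ (the lower barrier is a max of elements of $\mathcal E_{0,m}$, the upper one is $0$), so by the comparison principle $\int_\Omega\operatorname H_m(\varphi_t)<\infty$; hence $\varphi_t\in\mathcal E_{0,m}$, which is item (1). The main obstacle I anticipate is verifying cleanly that the candidate barriers genuinely lie in $SG(u_0,u_1)$ — in particular handling the $\limsup$ boundary conditions on $C_0$ and $C_1$ for the \emph{max} of the two affine slots rather than for each slot individually — and making the upper barrier argument precise enough to conclude uniform decay of $\varphi_t$ near $\partial\Omega$; everything else is routine given Proposition~\ref{dp} and the comparison principle.
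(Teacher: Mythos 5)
There is a genuine gap in your treatment of items (1) and (2). The lower barrier you import from item (3) does \emph{not} tend to $0$ as $z\to\partial\Omega$: since $u_0,u_1\to 0$ there, the function $\max\bigl(u_0-t\|u_1\|_{\infty},\,u_1-(1-t)\|u_1\|_{\infty}\bigr)$ tends to $-\min(t,1-t)\|u_1\|_{\infty}<0$ for $t\in(0,1)$. Your phrase ``the subtracted constants are fixed'' works against you, not for you. For the same reason $u_0-t\|u_1\|_{\infty}$ is \emph{not} an element of $\mathcal E_{0,m}$ (its boundary values are $-t\|u_1\|_{\infty}$), so squeezing $\varphi_t$ between this barrier and $0$ does not place $\varphi_t$ in $\mathcal E_{0,m}$, and the auxiliary ``upper barrier / maximum principle'' discussion does not repair this. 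The missing idea is the $\lambda$-independent barrier: $(z,\lambda)\mapsto u_0(z)+u_1(z)$ lies in $SG(u_0,u_1)$ (it is $(m+1)$-subharmonic on $\Omega\times A$ precisely because $u_0,u_1\in\mathcal E_{0,m+1}$, it is $\leq 0$, and it is $\leq u_j$ on $C_j$ since each summand is $\leq 0$). Hence $u_0+u_1\leq\Psi$, so $u_0+u_1\leq\varphi_t\leq 0$ for every $t$, and since $u_0+u_1\in\mathcal E_{0,m+1}\subset\mathcal E_{0,m}$ both (1) and (2) follow at once. This is exactly the paper's (one-line) argument.

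On item (3) your route coincides with the paper's, but the inequality you assert to handle the second slot at $C_0$, namely $u_1-\|u_1\|_{\infty}\leq u_0$, does not follow from $u_0,u_1\leq 0$: it fails wherever $u_0<-\|u_1\|_{\infty}$ (e.g.\ if $\|u_0\|_{\infty}>\|u_1\|_{\infty}$). The first slot is genuinely fine, since at $C_1$ one has $u_0-\|u_1\|_{\infty}\leq-\|u_1\|_{\infty}\leq u_1$. The paper's own proof of (3) is equally terse and subject to the same objection; replacing $\|u_1\|_{\infty}$ by $\max(\|u_0\|_{\infty},\|u_1\|_{\infty})$ makes both slots members of $SG(u_0,u_1)$ and the argument goes through, so this is best regarded as a slip in the constant rather than a defect peculiar to your write-up — but as it stands your justification of the $C_0$ boundary condition is not valid.
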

\begin{proof}\begin{enumerate}\itemsep2mm

\item We have $u_0+u_1\leq \Psi$, so for any $t$ we get $u_0+u_1\leq \varphi_t$, so $\varphi_t\in \mathcal E_{0,m}$.

\item It follows from (1).

\item It is enough to observe that the function $\max (u_0-t\|u_1\|_{\infty}, u_1-(1-t)\|u_1\|_{\infty})\in SG(u_0,u_1)$.
\end{enumerate}
\end{proof}

\begin{remark}
Let $(\lambda,z)\in \Omega\times A$ and let $d=d_{\lambda}+d_z$, $d^c=d_{\lambda}^c+d^c_z$. Note that for any function $F(z,\lambda)$ of $(n+1)$-variables,
\begin{multline}\label{6}
(dd^cF)^{m+1}=(m+1)d_{\lambda}d_{\lambda}^cF\wedge(d_zd_z^cF)^m+\\
m(m+1)d_{\lambda}d_z^cF\wedge d_zd_{\lambda}^cF\wedge(d_zd_z^c F)^m.
\end{multline}
\end{remark}

\begin{theorem}\label{lin} Let $1\leq m< n$, and let $\Omega$ be a bounded $(m+1)$-hyperconvex domain in $\mathbb C^n$, $n>1$. Let $\varphi_0,\varphi_1\in \widehat{\mathcal E}_{0,m}$ and let $\varphi_t$, $t\in[0,1]$, be a weak geodesic joining $\varphi_0$, and $\varphi_1$. Then the functional $\EE$ is linear along $\varphi_t$ in the sense that
\[
\EE(\varphi_t)=(1-t)\EE(\varphi_0)+t\EE(\varphi_1).
\]
\end{theorem}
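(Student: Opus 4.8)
The goal is to show that the Aubin--Mabuchi energy functional $\EE$ is affine along the weak geodesic $\varphi_t$. The natural strategy is to show that $t\mapsto \EE(\varphi_t)$ is both convex and concave on $[0,1]$; together with the boundary values $\EE(\varphi_0)$ and $\EE(\varphi_1)$ (which are correct since $\varphi_t$ attains the right boundary data and, by Proposition~\ref{bp}, $\varphi_t\in\mathcal E_{0,m}$ for every $t$), this forces linearity. Concavity of $\EE$ along \emph{affine} paths was already established in Proposition~\ref{concave}, but the geodesic is not affine in $t$, so the argument has to be redone using maximality of $\Psi$ on $\Omega\times A$. The key computational tool is the formula~(\ref{6}) for $(dd^cF)^{m+1}$ in the splitting $d=d_\lambda+d_z$: since $\Psi$ is a maximal $(m+1)$-subharmonic function on $\Omega\times A$, we have $(dd^c\Psi)^{m+1}=0$ there, which by~(\ref{6}) says
\[
(m+1)\,d_\lambda d_\lambda^c\Psi\wedge (d_zd_z^c\Psi)^m + m(m+1)\,d_\lambda d_z^c\Psi\wedge d_z d_\lambda^c\Psi\wedge (d_zd_z^c\Psi)^m=0
\]
on $\Omega\times A$.

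\textbf{Main steps.} First I would pass to the logarithmic coordinate $s=\log|\lambda|\in(0,1)$ and observe that, because $\Psi$ is $S^1$-invariant (the construction of $\Psi$ as the supremum over $SG(u_0,u_1)$ respects rotations in $\lambda$), the function $\varphi_t(z)=\Psi(z,e^t)$ depends on $\lambda$ only through $s=t$. In this coordinate $d_\lambda d_\lambda^c\Psi$ becomes (up to a positive constant) $\tfrac{\partial^2\varphi_t}{\partial t^2}\,i\,d\lambda\wedge d\bar\lambda/|\lambda|^2$ and the mixed term $d_\lambda d_z^c\Psi\wedge d_z d_\lambda^c\Psi$ is a nonnegative $(1,1)\wedge(1,1)$-type contribution; the vanishing of $(dd^c\Psi)^{m+1}$ then gives, after integrating the $z$-variables against $\be$,
\[
\frac{d^2}{dt^2}\EE(\varphi_t) = \frac{d}{dt}\int_\Omega \dot\varphi_t\,\HH(\varphi_t)
= \int_\Omega \ddot\varphi_t\,\HH(\varphi_t) + m\int_\Omega \dot\varphi_t\, dd^c\dot\varphi_t\wedge (dd^c\varphi_t)^{m-1}\wedge\be,
\]
where the dot is $d/dt$; I would justify the first equality using Proposition~\ref{concave} (the derivative of $\EE$ along any path, computed via the chain rule, only sees the $z$-Hessian). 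The point is that the right-hand side, viewed via formula~(\ref{6}), is exactly (a constant times) the integral over a slice of $(dd^c\Psi)^{m+1}$, hence equals $0$. So $\EE(\varphi_t)$ is affine. To be careful one should first prove the identity for smooth strictly $(m+1)$-subharmonic geodesics (where all manipulations are classical), and then obtain the general case by approximation: $\varphi_0,\varphi_1\in\widehat{\mathcal E}_{0,m}$ can be approximated from above by smooth functions, the corresponding geodesics decrease to $\varphi_t$ (by the maximality/supremum definition of $\Psi$ and a comparison argument), and $\EE$ is continuous under decreasing limits by Proposition~\ref{basic}(4). An alternative to the pointwise second-derivative computation is to establish convexity and concavity of $t\mapsto\EE(\varphi_t)$ separately: concavity follows because for $t=\tfrac12(t_0+t_1)$ the function obtained by gluing the two sub-geodesics competes in $SG$, giving $\varphi_{t}\geq\tfrac12(\varphi_{t_0}+\varphi_{t_1})$-type estimates and hence (via Proposition~\ref{basic}(2)) an inequality on energies; convexity is the subharmonicity in $t$ of $s\mapsto\EE(\varphi_s)$, which comes from the subaveraging property of $\Psi$ over circles $|\lambda|=\text{const}$ together with the concavity of $\EE$ on affine segments from Proposition~\ref{concave}.

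\textbf{Main obstacle.} The delicate point is making the integration-by-parts and the identification ``slice integral of $(dd^c\Psi)^{m+1}=0$'' rigorous when $\Psi$ is merely a maximal (hence bounded but not necessarily smooth) $(m+1)$-subharmonic function on $\Omega\times A$: one must control boundary terms on $\Omega\times A$ and justify differentiating $\EE(\varphi_t)$ twice in $t$. I expect the cleanest route is the approximation scheme sketched above, reducing everything to the smooth case where formula~(\ref{6}) and classical Stokes apply, and then invoking the monotone continuity of $\EE$ (Proposition~\ref{basic}) and the continuity of $t\mapsto\varphi_t$ in the relevant topologies (Propositions~\ref{monotone} and~\ref{basic}) to pass to the limit; the affine boundary behaviour of the approximants must be checked so that the limiting affine function still has the right endpoint values $\EE(\varphi_0)$ and $\EE(\varphi_1)$.
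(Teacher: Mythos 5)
Your plan is essentially the paper's proof: both reduce linearity to harmonicity of $\lambda\mapsto\EE(\Psi(\cdot,\lambda))$, compute the second derivative via formula~(\ref{6}) and Stokes' theorem as a slice integral of $\operatorname{H}_{m+1}(\Psi)$, which vanishes by maximality, and justify everything by smooth approximation together with the monotone continuity of $\EE$ from Proposition~\ref{basic}. The boundary-term issue you flag as the main obstacle is resolved in the paper exactly as you anticipate, by constructing smooth approximants that agree with $\epsilon^{-1}u$ (hence are $\lambda$-independent) near $\partial\Omega\times A$.
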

\begin{proof}
Observe that $\Psi(z,\lambda)=\Psi(z,|\lambda|)$, and the linearity of the function $t\to \EE(\varphi_t)$, is equivalent with the fact that the
function $\lambda \to \EE(\Psi(z,\lambda))$ is harmonic.

Our strategy is to prove that function $\Psi$ can be approximated by smooth functions $\Psi_k\in \widehat{\mathcal E}_{0,m}\cap \mathcal C^{\infty}(\Omega\times A)$, and $d_{\lambda}\Psi_k=0$ near the boundary $\partial \Omega$.

In the first step of construction note that by~\cite{ACH}, there exists a sequence $\Psi^j\in \widehat{\mathcal E}_{0,m}\cap \mathcal C^{\infty}(\Omega\times A)$ such that $\Psi^j\searrow \Psi$, $j\to \infty$. Furthermore, for all $t\in[0,1]$ we have that $\psi_t^j\searrow \varphi_t$, and then by Proposition~\ref{basic} it follows $\EE(\varphi_t^j)\to \EE(\varphi_t)$, as $j\to \infty$. Therefore, we can assume that $\Psi\in \widehat{\mathcal E}_{0,m}\cap \mathcal C^{\infty}(\Omega\times A)$.

Let $u$ be a smooth exhaustion function for the domain $\Omega$ (see~\cite{ACH}). Now fix $\epsilon >0$. In the second step of construction note that by~\cite{ACH} there exists a smooth $(m+1)$-subharmonic function $\Phi_{\epsilon}$ such that $\Phi_{\epsilon}\geq \max (\Psi-2\epsilon, \epsilon ^{-1}u)$ and $\Phi_{\epsilon}=\max (\Psi-2\epsilon, \epsilon ^{-1}u)$ outside some neighborhood of the set $\{\Psi-2\epsilon=\epsilon ^{-1}u\}$. The function $\Phi_{\epsilon}$ is of the form
\[
\Phi_{\epsilon}=\frac 12\left(\Psi-2\epsilon+\epsilon^{-1}u+\alpha_{\epsilon}(\Psi-2\epsilon-\epsilon^{-1}u)\right),
\]
where $\alpha_{\epsilon}$ is smooth convex function such that $\alpha_{\epsilon}=|x|$ for $|x|>\epsilon$. Note that $\Phi_{\epsilon}$ is equal to $\epsilon ^{-1}u$ near  $\partial \Omega\times A$, and therefore it does not depend on $\lambda$ there.  Hence, $d_{\lambda}\Psi_{\epsilon}=0$.
Let $\epsilon \to 0^+$, then $\Phi_{\epsilon}\to \Psi$ uniformly.

By the procedure described above we can assume that $\Psi$ can be approximated by a sequence $\Psi_k\in \widehat{\mathcal E}_{0,m}\cap \mathcal C^{\infty}(\Omega\times A)$ and $d_{\lambda}\Psi_k=0$ near the boundary $\partial \Omega$.

It follows from  Proposition~\ref{basic} that
\begin{equation}\label{5}
\frac {\EE(\varphi_{t+s})-\EE(\varphi_t)}{s}=\frac {1}{m+1}\sum_{k=0}^m\int_{\Omega}\left(\frac {\varphi_{t+s}-\varphi_t}{s}\right)(dd^c\varphi_{t+s})^k\wedge(dd^c\varphi_t)^{m-k}\wedge \be.
\end{equation}

Condition (\ref{5}) yields
\[
d^c_{\lambda}\EE(\Psi_k)=\int_{\Omega}d^c_{\lambda}\Psi_k\wedge(d_zd_z^c\Psi_k)^m\wedge\be.
\]

Using Stokes' theorem and the fact that $d_{\lambda}\Psi_k=0$ near the boundary $\partial \Omega$ and by (\ref{6}) we get
\begin{multline}\label{10}
d_{\lambda}d_{\lambda}^c\EE(\Psi_k)=\int_{\Omega}d_{\lambda}d^c_{\lambda}\Psi_k\wedge(d_zd_z^c\Psi_k)^m\wedge\be+\\
m\int_{\Omega}d^c_{\lambda}\Psi_k\wedge d_{\lambda}(d_zd_z^c\Psi_k)\wedge(d_zd_z^c\Psi_k)^{m-1}\wedge\be=\\
\int_{\Omega}d_{\lambda}d^c_{\lambda}\Psi_k\wedge(d_zd_z^c\Psi_k)^m\wedge\be-m\int_{\Omega}d_zd^c_{\lambda}\Psi_k\wedge d_z^cd_{\lambda}\Psi_k\wedge(d_zd_z^c\Psi_k)^{m-1}\wedge\be=\\
\frac 1{m+1}\int_{\Omega}\operatorname{H}_{m+1}(\Psi_k(z,\lambda)).
\end{multline}
The proof is finished by letting $k\to \infty$, since $\operatorname{H}_{m+1}(\Psi_k)\to \operatorname{H}_{m+1}(\Psi)=0$.
\end{proof}

Geodesics connecting different points of $(\mathcal E_{1,m},\dd)$ may not be unique, see~\cite{D1}.
\begin{theorem}\label{thm_geod} Let $1\leq m< n$, and let $\Omega$ be a bounded $(m+1)$-hyperconvex domain in $\mathbb C^n$, $n>1$. Let $\varphi_0,\varphi_1\in \widehat{\mathcal E}_{0,m}$ and let $\varphi_t$, $t\in[0,1]$ be a weak geodesic connecting $\varphi_0$ and $\varphi_1$. Then $\varphi_t$ is a geodesic in the metric space $(\mathcal E_{1,m},\dd)$.
\end{theorem}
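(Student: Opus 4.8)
The plan is to show that the weak geodesic $\varphi_t$ satisfies the defining identity $\dd(\varphi_{t_1},\varphi_{t_2})=|t_1-t_2|\,\dd(\varphi_0,\varphi_1)$. The key tool is Theorem~\ref{lin}, which says $\EE$ is affine along $\varphi_t$, together with the fact (Proposition~\ref{bp}) that each $\varphi_t\in\mathcal E_{0,m}$ with the uniform lower bound $\varphi_t\geq\max(u_0-t\|u_1\|_\infty,u_1-(1-t)\|u_1\|_\infty)$. First I would check continuity of $t\mapsto\varphi_t$ in the $\dd$-metric: by the lower bound and maximality, the geodesic varies monotonically in a controlled way (e.g. $\varphi_t-\varphi_s$ is uniformly bounded and $\varphi_t\to\varphi_s$ in $L^1$ as $t\to s$), and $\EE(\varphi_t)$ is affine hence continuous, so Theorem~\ref{thm_conv} gives $\dd(\varphi_t,\varphi_s)\to 0$.

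The heart of the argument is the distance identity. First I would reduce to computing $\dd(\varphi_{t_1},\varphi_{t_2})$ for $t_1<t_2$. The natural guess is that $\PP(\varphi_{t_1},\varphi_{t_2})=\varphi_{t_2}$, i.e. that the geodesic is \emph{ordered}; but this is false in general (geodesics need not be monotone). Instead, the clean route is to use a \emph{sub-geodesic/super-geodesic} comparison: for any $v_0,v_1\in\widehat{\mathcal E}_{0,m}$ with $v_0\geq\varphi_0$, $v_1\geq\varphi_1$, the corresponding weak geodesic dominates $\varphi_t$, and one exploits this with the endpoints replaced by $\PP(\varphi_0,\varphi_1)$ repeated, or by $\varphi_0$ and $\varphi_0$ (a constant geodesic). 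Concretely, let $\psi_t$ be the weak geodesic joining $\varphi_{t_1}$ and $\varphi_{t_2}$ (reparametrized to $[0,1]$); by uniqueness of the Perron envelope and the maximality characterization, $\psi_s=\varphi_{t_1+s(t_2-t_1)}$, so it suffices to treat $t_1=0$, $t_2=1$, $t$ arbitrary, and prove
\[
\dd(\varphi_0,\varphi_t)+\dd(\varphi_t,\varphi_1)=\dd(\varphi_0,\varphi_1),\qquad \dd(\varphi_0,\varphi_t)=t\,\dd(\varphi_0,\varphi_1).
\]
The first (triangle equality) combined with the $L^1$-continuity and the metric axioms forces the second by a standard argument (a continuous curve realizing the triangle equality at every split point is a geodesic). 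For the triangle equality, the idea is to bound $\dd(\varphi_0,\varphi_t)$ from below by $t\,\dd(\varphi_0,\varphi_1)$ and $\dd(\varphi_t,\varphi_1)$ from below by $(1-t)\,\dd(\varphi_0,\varphi_1)$, since the reverse (subadditivity) is the triangle inequality already proved in Theorem~\ref{thm_compmetric}. These lower bounds should come from the affinity of $\EE$: writing $\dd(u,v)=\EE(u)+\EE(v)-2\EE(\PP(u,v))$ and using that $\PP(\varphi_0,\varphi_t)$ is itself comparable to a geodesic endpoint configuration, one gets $\EE(\PP(\varphi_0,\varphi_t))$ controlled affinely, and the telescoping of the affine function $s\mapsto\EE(\varphi_s)$ yields exactly the needed equalities.

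The main obstacle I expect is controlling $\EE(\PP(\varphi_{t_1},\varphi_{t_2}))$, i.e. the rooftop envelope of two \emph{interior} points of the geodesic, since the geodesic is not monotone and so $\PP(\varphi_{t_1},\varphi_{t_2})$ is a genuine envelope, not simply one of the two functions. The resolution should be to observe that $\PP(\varphi_{t_1},\varphi_{t_2})$ lies below both, hence its weak geodesic extension (using constant boundary data, or endpoints $\varphi_{t_1}\wedge\varphi_{t_2}$ appropriately) is again handled by Theorem~\ref{lin}, giving an affine upper bound $\EE(\PP(\varphi_{t_1},\varphi_{t_2}))\leq\min(\EE(\varphi_{t_1}),\EE(\varphi_{t_2}))$ with equality forced by the $\dd\geq 0$ constraint and the already-established triangle inequality. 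Combining these two bounds pins down $\dd(\varphi_{t_1},\varphi_{t_2})=|t_1-t_2|\,\dd(\varphi_0,\varphi_1)$. The remaining steps — verifying $\varphi_0,\varphi_1$ are the correct endpoints and that $\varphi_t\in\mathcal E_{1,m}$ — are already supplied by Proposition~\ref{bp} and the construction in the text, so nothing beyond bookkeeping is needed there.
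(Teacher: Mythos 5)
Your overall architecture (reduce to $t_1=0$, $t_2=1$ by restricting the weak geodesic to a subinterval, use the affinity of $\EE$ from Theorem~\ref{lin}, and close with the triangle inequality) matches the paper, but the heart of the argument is missing and the substitute you offer does not work. Two concrete problems. First, a direction error: to force the triangle equality you need \emph{upper} bounds $\dd(\varphi_0,\varphi_t)\leq t\,\dd(\varphi_0,\varphi_1)$ and $\dd(\varphi_t,\varphi_1)\leq (1-t)\,\dd(\varphi_0,\varphi_1)$ (their sum then matches the lower bound coming from the triangle inequality); the lower bounds you propose to prove are automatic consequences of the triangle inequality once the upper bounds are known, and proving them accomplishes nothing. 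Second, and more seriously, the upper bound on $\dd(\varphi_0,\varphi_t)$ is equivalent to a \emph{lower} bound
\[
\EE(\PP(\varphi_0,\varphi_t))\;\geq\;(1-t)\EE(\varphi_0)+t\,\EE(\PP(\varphi_0,\varphi_1)),
\]
and your proposed control of the rooftop envelope, namely $\EE(\PP(\varphi_{t_1},\varphi_{t_2}))\leq\min(\EE(\varphi_{t_1}),\EE(\varphi_{t_2}))$ ``with equality forced by the $\dd\geq 0$ constraint,'' is both in the wrong direction (that inequality is trivial from monotonicity of $\EE$, since $\PP(u,v)\leq u,v$) and false as an equality in general: $\EE(\PP(u,v))=\min(\EE(u),\EE(v))$ would mean $\dd(u,v)=|\EE(u)-\EE(v)|$ for all $u,v$, which fails whenever neither function dominates the other.

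The missing idea is an auxiliary comparison geodesic. The paper takes $\psi_t$ to be the weak geodesic joining $\varphi_0$ to $\PP(\varphi_0,\varphi_1)$. Since $SG(\varphi_0,\PP(\varphi_0,\varphi_1))\subset SG(\varphi_0,\varphi_1)$ one gets $\psi_t\leq\varphi_t$, and since $SG(\varphi_0,\PP(\varphi_0,\varphi_1))\subset SG(\varphi_0,\varphi_0)$ one gets $\psi_t\leq\varphi_0$; hence $\psi_t$ is a candidate in the envelope defining $\PP(\varphi_0,\varphi_t)$, so $\psi_t\leq\PP(\varphi_0,\varphi_t)$. Applying Theorem~\ref{lin} to $\psi_t$ gives $\EE(\psi_t)=(1-t)\EE(\varphi_0)+t\,\EE(\PP(\varphi_0,\varphi_1))$, and monotonicity of $\EE$ then yields exactly the displayed lower bound, hence $\dd(\varphi_0,\varphi_t)\leq t\,\dd(\varphi_0,\varphi_1)$; the symmetric argument gives the other upper bound and the triangle inequality finishes. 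Without this (or an equivalent) construction, your proof does not go through. A further small caveat: Theorem~\ref{lin} as stated requires endpoints in $\widehat{\mathcal E}_{0,m}$, so one should note that $\PP(\varphi_0,\varphi_1)$ lies in that class before invoking it for $\psi_t$.
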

\begin{proof} It is sufficient to prove the theorem for $s=0$ and $t>0$, i.e.
\begin{equation}\label{11}
\dd(\varphi_0,\varphi_t)=t\dd(\varphi_0,\varphi_1),
\end{equation}
since if (\ref{11}) holds, and if $\varphi_t$, $t\in[0,1]$, is a weak geodesic connecting $\varphi_0$ and $\varphi_1$, then $\varphi_s$, $s\in[0,t]$ is a weak geodesic connecting $\varphi_0$ and $\varphi_t$. Therefore,
\[
\dd(\varphi_s,\varphi_t)=\left(1-\frac st\right)\dd(\varphi_0,\varphi_t)=\left(1-\frac st\right)t\dd(\varphi_0,\varphi_1)=(t-s)\dd(\varphi_0,\varphi_1).
\]
Next, let $\psi_t$ be a weak geodesic connecting $\varphi_0$, and $\PP(\varphi_0,\varphi_1)$. Therefore, $\psi_t\leq \varphi_t$ for all $t\in [0,1]$.  Now we shall prove that $\psi_t\leq \varphi_0$ for all $t\in [0,1]$. To see it it is enough to observe that $SG(\varphi_0,\PP(\varphi_0,\varphi_1))\subset SG(\varphi_0,\varphi_0)$.

Therefore $\psi_t\leq \PP(\varphi_0,\varphi_t)$ and then by Proposition~\ref{basic} and by Theorem~\ref{lin}
\begin{multline*}
\frac 12\left(\dd(\varphi_0,\varphi_t)-t\dd(\varphi_0,\varphi_1)\right)=(1-t)\EE(\varphi_0)+t\EE(\PP(\varphi_0,\varphi_1))-\EE(\PP(\varphi_0,\varphi_t))\leq\\
(1-t)\EE(\varphi_0)+t\EE(\PP(\varphi_0,\varphi_1))-\EE(\psi_t)=0.
\end{multline*}
In a similar way, one can get
\[
\dd(\varphi_t,\varphi_1)\leq (1-t)\dd(\varphi_0,\varphi_1).
\]
Combining last two inequalities we obtain
\[
t\dd(\varphi_0,\varphi_1)\geq \dd(\varphi_0,\varphi_t)\geq \dd(\varphi_0,\varphi_1)-\dd(\varphi_t,\varphi_1)\geq t\dd(\varphi_0,\varphi_1),
\]
which means that $t\dd(\varphi_0,\varphi_1)=\dd(\varphi_0,\varphi_t)$. This ends the proof.
\end{proof}

\begin{proposition}\label{pr} Let $1\leq m< n$, and let $\Omega$ be a bounded $(m+1)$-hyperconvex domain in $\mathbb C^n$, $n>1$. Let $u_j,v_j\in \mathcal E_{1,m}$ and let $\varphi^j_t$ be a geodesic connecting $u_j$ and $v_j$. If $u_j\searrow u\in \mathcal E_{1,m}$, $v_j\searrow v\in \mathcal E_{1,m}$, then $\varphi_t^j\searrow\varphi_t$ and $\varphi_t$ is a geodesic connecting $u$ and $v$.
\end{proposition}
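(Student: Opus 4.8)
The plan is to exploit the variational/monotonicity characterization of the weak geodesic together with the already-established continuity of $\dd$ under decreasing sequences (Proposition~\ref{monotone}). First I would observe that since $u_j\searrow u$ and $v_j\searrow v$, and since each element of $SG(u_{j+1},v_{j+1})$ is admissible as a competitor in $SG(u_j,v_j)$ (the boundary conditions $\limsup_{\lambda\to C_0}v\le u_{j+1}\le u_j$ and $\limsup_{\lambda\to C_1}v\le v_{j+1}\le v_j$ are only relaxed), we get $\Psi^{j+1}\le\Psi^j$ for the corresponding Perron envelopes on $\Omega\times A$, hence $\varphi_t^{j+1}\le\varphi_t^j$ for every $t$. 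Thus $\varphi_t^j$ decreases; set $\widetilde\varphi_t=(\lim_{j\to\infty}\varphi_t^j)^*$, or equivalently let $\widetilde\Psi=(\lim_j\Psi^j)^*$ on $\Omega\times A$ and put $\widetilde\varphi_t(z)=\widetilde\Psi(z,e^t)$. The limit $\widetilde\Psi$ is $(m+1)$-subharmonic, $\le 0$, and one checks its boundary behaviour: $\limsup_{\lambda\to C_0}\widetilde\Psi(z,\lambda)\le\limsup_{\lambda\to C_0}\Psi^j(z,\lambda)\le u_j(z)$ for every $j$, so letting $j\to\infty$ gives $\le u(z)$, and similarly for $C_1$. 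Hence $\widetilde\Psi\in SG(u,v)$, so $\widetilde\Psi\le\Psi$, where $\Psi$ is the envelope defining the geodesic joining $u$ and $v$.

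For the reverse inequality, note that $\Psi\le 0$ and $\limsup_{\lambda\to C_0}\Psi(z,\lambda)\le u(z)\le u_j(z)$, $\limsup_{\lambda\to C_1}\Psi(z,\lambda)\le v(z)\le v_j(z)$, so $\Psi\in SG(u_j,v_j)$, giving $\Psi\le\Psi^j$ for every $j$, hence $\Psi\le\widetilde\Psi$. Combining, $\widetilde\Psi=\Psi$, i.e. $\varphi_t^j\searrow\varphi_t$, where $\varphi_t$ is the weak geodesic joining $u$ and $v$.

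It remains to see that $\varphi_t$ is a geodesic in $(\mathcal E_{1,m},\dd)$, i.e. that $\dd(\varphi_{t_1},\varphi_{t_2})=|t_1-t_2|\,\dd(u,v)$ for all $t_1,t_2\in[0,1]$. For each $j$, since $\varphi^j_t$ is a geodesic connecting $u_j,v_j$ we have $\dd(\varphi^j_{t_1},\varphi^j_{t_2})=|t_1-t_2|\,\dd(u_j,v_j)$. Now apply Proposition~\ref{monotone}: the sequences $\varphi^j_{t_1}\searrow\varphi_{t_1}$ and $\varphi^j_{t_2}\searrow\varphi_{t_2}$ are decreasing in $\mathcal E_{1,m}$ (membership in $\mathcal E_{1,m}$ follows since $\varphi_t^j\ge u_j+v_j\ge u+v$, which lies in $\mathcal E_{1,m}$ by the remark after the definition of $\PP$), hence $\dd(\varphi^j_{t_1},\varphi^j_{t_2})\to\dd(\varphi_{t_1},\varphi_{t_2})$; likewise $u_j\searrow u$, $v_j\searrow v$ give $\dd(u_j,v_j)\to\dd(u,v)$. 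Passing to the limit in the geodesic identity yields $\dd(\varphi_{t_1},\varphi_{t_2})=|t_1-t_2|\,\dd(u,v)$. Continuity of $t\mapsto\varphi_t$ in the metric can be read off the same way, or deduced from the identity itself.

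The main obstacle I anticipate is the care needed in the boundary-value bookkeeping on the Hartogs-type domain $\Omega\times A$: one must justify that the $\limsup$ along $C_0$ and $C_1$ passes to the limit under the decreasing sequence $\Psi^j$ (using that the $\Psi^j$ decrease, so the limsup of the limit is controlled by the limsup of each $\Psi^j$), and that $\widetilde\Psi$ is genuinely the upper envelope rather than merely a pointwise limit — this is why one takes the usc regularization and invokes that a decreasing limit of $(m+1)$-subharmonic functions bounded above is $(m+1)$-subharmonic. Everything else is a routine application of Proposition~\ref{monotone}.
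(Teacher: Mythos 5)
Your proof is correct and takes essentially the same route as the paper's: the envelopes $\Psi^j$ decrease because the classes $SG(u_j,v_j)$ shrink, and the geodesic identity $\dd(\varphi^j_{t_1},\varphi^j_{t_2})=|t_1-t_2|\dd(u_j,v_j)$ passes to the limit by Proposition~\ref{monotone}. The paper's own proof consists of exactly these two observations; your additional identification of the decreasing limit with the Perron envelope of $SG(u,v)$, and the justification that $\varphi_t\in\mathcal E_{1,m}$ via the lower bound $u+v$, are details the paper leaves implicit.
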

\begin{proof}
If $u_j$ and $v_j$ are decreasing sequences, then $\varphi_t^j$ is also decreasing for any $t\in [0,1]$. Let $s,t\in [0,1]$. Thanks to Proposition~\ref{monotone}
we get
\[
\dd(\varphi_s,\varphi_t)\leftarrow \dd(\varphi^j_s,\varphi^j_t)=|t-s|\dd(u_j,v_j)\to |t-s|\dd(u,v).
\]
\end{proof}

\begin{theorem}\label{lin2} Let $1\leq m< n$, and let $\Omega$ be a bounded $(m+1)$-hyperconvex domain in $\mathbb C^n$, $n>1$. Let $\varphi_0,\varphi_1\in \widehat{\mathcal E}_{1,m}$. Then there exists a  geodesic, $\varphi_t$, $t\in[0,1]$, connecting $\varphi_0$ and $\varphi_1$ such that the functional $\EE$ is linear along $\varphi_t$, and $\varphi_t\to \varphi_l$ in capacity as $t\to l\in \{0,1\}$.
\end{theorem}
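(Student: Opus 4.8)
The plan is to reduce the statement to the case of endpoints in $\widehat{\mathcal E}_{0,m}$, which is already settled by Theorems~\ref{lin} and~\ref{thm_geod}, and then to transfer the two conclusions to $\widehat{\mathcal E}_{1,m}$ through a decreasing approximation, invoking the monotone stability results in Propositions~\ref{pr} and~\ref{basic} together with the capacity estimate in Proposition~\ref{cap}.

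Concretely, I would first fix $w\in\mathcal E_{0,m+1}$ (which exists since $\Omega$ is $(m+1)$-hyperconvex) and set $\varphi_l^j=\max(\varphi_l,jw)$ for $l=0,1$. Since the maximum of two $(m+1)$-subharmonic functions is $(m+1)$-subharmonic and $\max(\varphi_l,jw)$ is bounded, has vanishing boundary values, and has finite Hessian mass, we obtain $\varphi_l^j\in\widehat{\mathcal E}_{0,m}$ with $\varphi_l^j\searrow\varphi_l$ as $j\to\infty$. For each $j$ let $\varphi_t^j$, $t\in[0,1]$, be the weak geodesic joining $\varphi_0^j$ and $\varphi_1^j$. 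By Theorem~\ref{thm_geod} this is a geodesic in $(\mathcal E_{1,m},\dd)$, and by Theorem~\ref{lin} the functional $\EE$ is affine along it:
\[
\EE(\varphi_t^j)=(1-t)\EE(\varphi_0^j)+t\EE(\varphi_1^j).
\]
Because $\varphi_0^j\searrow\varphi_0$ and $\varphi_1^j\searrow\varphi_1$, Proposition~\ref{pr} then gives that $\varphi_t^j\searrow\varphi_t$ for each $t$, that $\varphi_t\in\mathcal E_{1,m}$, and that $t\mapsto\varphi_t$ is a geodesic in $(\mathcal E_{1,m},\dd)$ connecting $\varphi_0$ and $\varphi_1$.

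Letting $j\to\infty$ in the displayed identity yields the linearity of $\EE$ along $\varphi_t$: by Proposition~\ref{basic}~(4) the functional $\EE$ is continuous along decreasing sequences, so $\EE(\varphi_t^j)\to\EE(\varphi_t)$, $\EE(\varphi_0^j)\to\EE(\varphi_0)$, and $\EE(\varphi_1^j)\to\EE(\varphi_1)$, whence $\EE(\varphi_t)=(1-t)\EE(\varphi_0)+t\EE(\varphi_1)$.

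Finally I would check the convergence at the endpoints. Since $t\mapsto\varphi_t$ is a geodesic, $\dd(\varphi_0,\varphi_t)=t\,\dd(\varphi_0,\varphi_1)\to0$ as $t\to0^+$ and $\dd(\varphi_1,\varphi_t)=(1-t)\dd(\varphi_0,\varphi_1)\to0$ as $t\to1^-$. To deduce $\varphi_t\to\varphi_0$ in $\operatorname{cap}_m$ as $t\to0^+$ (and symmetrically at $t=1$), I would argue by contradiction: if it failed there would be $\epsilon,\delta>0$, a compact $K\Subset\Omega$, and a sequence $t_j\to0^+$ with $\operatorname{cap}_m(K\cap\{|\varphi_{t_j}-\varphi_0|>\epsilon\})\geq\delta$; but $\dd(\varphi_0,\varphi_{t_j})\to0$, so Proposition~\ref{cap} applied to the sequence $\{\varphi_{t_j}\}$ forces $\varphi_{t_j}\to\varphi_0$ in $\operatorname{cap}_m$, a contradiction. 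I expect the only genuinely delicate point to be the first step, namely verifying that the truncations $\max(\varphi_l,jw)$ really belong to $\mathcal E_{0,m}$ — boundedness and vanishing boundary values are immediate, but the finiteness of the Hessian mass requires the standard Cegrell-type estimate in the Caffarelli--Nirenberg--Spruck setting; everything afterwards is a matter of quoting the monotone convergence and capacity results already established.
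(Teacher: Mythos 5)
Your proposal is correct and follows essentially the same route as the paper: approximate $\varphi_0,\varphi_1$ by decreasing sequences in $\widehat{\mathcal E}_{0,m}$ (the paper cites the approximation result of \cite{ACH} where you build the truncations $\max(\varphi_l,jw)$ by hand), apply Theorems~\ref{lin} and~\ref{thm_geod} together with Proposition~\ref{pr}, and conclude the endpoint convergence from $\dd(\varphi_0,\varphi_t)=t\,\dd(\varphi_0,\varphi_1)$ via Proposition~\ref{cap}. Your explicit passage to the limit in $\EE(\varphi_t^j)=(1-t)\EE(\varphi_0^j)+t\EE(\varphi_1^j)$ using Proposition~\ref{basic}~(4) is in fact a welcome clarification of the paper's terser appeal to Theorem~\ref{lin} at this point.
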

\begin{proof} There are decreasing sequences $u_j\searrow \varphi_0$ and $v_j\searrow \varphi_1$, $j\to \infty$, $u_j,v_j\in \widehat{\mathcal E}_{0,m}$ (\cite{ACH}). For each $j$ use Theorem~\ref{lin} to get geodesics $\varphi^j_t$ connecting $u_j$ and $v_j$, and the apply Proposition~\ref{pr} to arrive at $\varphi^j_t\searrow \varphi_t$, $j\to \infty$. Furthermore, $\varphi_t$ is a geodesic connecting $\varphi_0$ and $\varphi_1$. Then by Theorem~\ref{lin}, we know that functional $\EE$ is linear along $\varphi_t$. Since,
\[
\dd(\varphi_t,\varphi_0)=t\dd(\varphi_0,\varphi_1)\to 0, \ \ t\to 0,
\]
we can use Proposition~\ref{cap} to obtain $\varphi_t\to \varphi_0$ in capacity, as $t\to 0$. In the similar manner one can prove that $\varphi_t\to \varphi_1$ in capacity as $t\to 1$.
\end{proof}

The following proposition shows that the geodesic obtaining from approximation by weak geodesic is unique.

\begin{proposition} Let $1\leq m< n$, and let $\Omega$ be a bounded $(m+1)$-hyperconvex domain in $\mathbb C^n$, $n>1$. Let $u_0,u_1,v_0,v_1\in \widehat{\mathcal E}_{0,m}$. Also let $\varphi_t$ be a weak geodesic connecting $u_0$ and $u_1$, and let $\psi_t$ be a weak geodesic connecting $v_0$ and $v_1$. Then
\[
\dd(\varphi_t,\psi_t)\leq (1-t)\dd(u_0,v_0)+t\dd(u_1,v_1).
\]
\end{proposition}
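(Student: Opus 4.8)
The plan is to exploit the geodesic property $\dd(\varphi_s,\varphi_t)=|s-t|\dd(u_0,u_1)$ together with the triangle inequality and a "convexity along geodesics" estimate for the distance function. First I would reduce to the boundary cases. By the triangle inequality,
\[
\dd(\varphi_t,\psi_t)\leq \dd(\varphi_t,\PP(\varphi_t,\psi_t))+\dd(\psi_t,\PP(\varphi_t,\psi_t)),
\]
so it suffices to control each term. The key idea is that $t\mapsto \dd(\varphi_t,\psi_t)$ should be a convex function of $t$, with boundary values $\dd(u_0,v_0)$ at $t=0$ (using $\limsup_{\lambda\to C_0}$ of the relevant envelopes) and $\dd(u_1,v_1)$ at $t=1$; convexity plus these boundary values gives exactly the claimed inequality $\dd(\varphi_t,\psi_t)\leq (1-t)\dd(u_0,v_0)+t\dd(u_1,v_1)$.

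To prove convexity, I would work on the product $\Omega\times A$ and consider the rooftop envelope $\Theta=\PP(\Psi,\Xi)$ of the two subextensions $\Psi\in SG(u_0,u_1)$ and $\Xi\in SG(v_0,v_1)$; since $\Psi,\Xi$ are $(m+1)$-subharmonic and $\leq 0$, so is their rooftop envelope, and it satisfies $\limsup_{\lambda\to C_j}\Theta(z,\lambda)\leq \PP(\cdot,\cdot)$ of the boundary data. The crucial point is that $\Theta(z,e^t)\leq \PP(\varphi_t,\psi_t)$, hence a weak geodesic $\chi_t$ joining $\PP(u_0,v_0)$ and $\PP(u_1,v_1)$ satisfies $\chi_t\leq \PP(\varphi_t,\psi_t)$, because $SG(\PP(u_0,v_0),\PP(u_1,v_1))$ contains the envelope built from $\Psi,\Xi$. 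Combining this with $\varphi_t\geq \chi_t$-type bounds and the linearity of $\EE$ along weak geodesics between functions in $\widehat{\E}_{0,m}$ (Theorem~\ref{lin}), I get
\[
\EE(\PP(\varphi_t,\psi_t))\geq \EE(\chi_t)=(1-t)\EE(\PP(u_0,v_0))+t\EE(\PP(u_1,v_1)),
\]
while $\EE(\varphi_t)$ and $\EE(\psi_t)$ are exactly linear in $t$ by Theorem~\ref{lin}. Since $\dd(\varphi_t,\psi_t)=\EE(\varphi_t)+\EE(\psi_t)-2\EE(\PP(\varphi_t,\psi_t))$, the two linear terms contribute $(1-t)(\EE(u_0)+\EE(v_0))+t(\EE(u_1)+\EE(v_1))$ and the envelope term contributes at most $-2\big((1-t)\EE(\PP(u_0,v_0))+t\EE(\PP(u_1,v_1))\big)$; adding these gives precisely $(1-t)\dd(u_0,v_0)+t\dd(u_1,v_1)$.

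The main obstacle I anticipate is justifying the inequality $\chi_t\leq \PP(\varphi_t,\psi_t)$ rigorously, i.e. that the rooftop envelope of the two disc-subextensions, restricted to $\lambda=e^t$, lies below the rooftop envelope $\PP(\varphi_t,\psi_t)$ of the slices, and that it is itself admissible as a competitor in $SG(\PP(u_0,v_0),\PP(u_1,v_1))$. This requires checking the boundary behavior $\limsup_{\lambda\to C_j}\PP(\Psi,\Xi)(z,\lambda)\leq \PP(u_j,v_j)(z)$, which is not completely formal because the upper-semicontinuous regularization and the $\limsup$ over the annulus boundary must be handled carefully (one typically uses that a candidate $v\in SG$ is $\leq \min(\Psi,\Xi)$ pointwise, takes $\limsup$, and then passes to the sup and its regularization). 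A secondary technical point is that Theorem~\ref{lin} is stated for endpoints in $\widehat{\E}_{0,m}$, so I must verify $\PP(u_0,v_0),\PP(u_1,v_1)\in \widehat{\E}_{0,m}$ — which holds since $u_i+v_i\leq \PP(u_i,v_i)\leq 0$ forces membership in $\E_{0,m}$, and the rooftop envelope of $(m+1)$-subharmonic functions is $(m+1)$-subharmonic — so Theorem~\ref{lin} indeed applies to $\chi_t$. Once these points are in place, the remaining computation is just bookkeeping with the definition of $\dd$ and the linearity of $\EE$.
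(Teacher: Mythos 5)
Your overall strategy coincides with the paper's: use linearity of $\EE$ along $\varphi_t$, $\psi_t$ and along a weak geodesic $\chi_t$ joining $\PP(u_0,v_0)$ and $\PP(u_1,v_1)$ to reduce the claim to $\EE(\chi_t)\leq \EE(\PP(\varphi_t,\psi_t))$, and then establish this by showing $\chi_t\leq \PP(\varphi_t,\psi_t)$. The gap is in your justification of this last, crucial inequality. You argue that the product-space rooftop $\Theta=\PP(\Psi,\Xi)$ satisfies $\Theta(\cdot,e^t)\leq \PP(\varphi_t,\psi_t)$ and that $\Theta$ belongs to $SG(\PP(u_0,v_0),\PP(u_1,v_1))$, and conclude $\chi_t\leq \PP(\varphi_t,\psi_t)$. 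But membership of one competitor $\Theta$ in the defining class only yields the \emph{lower} bound $\chi_t\geq \Theta(\cdot,e^t)$, and combining a lower bound for $\chi_t$ with an upper bound for $\Theta$ gives nothing: the inference is reversed. To bound $\chi_t$ from above you must dominate \emph{every} competitor in $SG(\PP(u_0,v_0),\PP(u_1,v_1))$. The correct (and much shorter) argument is the paper's: since $\PP(u_j,v_j)\leq u_j$ and $\PP(u_j,v_j)\leq v_j$ for $j=0,1$, one has the inclusions $SG(\PP(u_0,v_0),\PP(u_1,v_1))\subset SG(u_0,u_1)$ and $SG(\PP(u_0,v_0),\PP(u_1,v_1))\subset SG(v_0,v_1)$; taking suprema and restricting to $\lambda=e^t$ gives $\chi_t\leq \varphi_t$ and $\chi_t\leq \psi_t$, hence $\chi_t\leq \PP(\varphi_t,\psi_t)$ because $\chi_t\in\mathcal E_{1,m}$. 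The detour through $\PP(\Psi,\Xi)$ is unnecessary and, as used, logically inverted; the opening appeal to the triangle inequality and to convexity of $t\mapsto\dd(\varphi_t,\psi_t)$ is likewise never actually exploited.

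A secondary point, which you flag but dispatch too quickly: applying Theorem~\ref{lin} to $\chi_t$ requires $\PP(u_0,v_0),\PP(u_1,v_1)\in\widehat{\mathcal E}_{0,m}$. Your justification that ``the rooftop envelope of $(m+1)$-subharmonic functions is $(m+1)$-subharmonic'' is not immediate with the paper's definition of $\PP$, which takes the supremum over competitors in $\mathcal E_{1,m}$, i.e.\ over $m$-subharmonic functions; the resulting envelope is a priori only $m$-subharmonic even when $u_i,v_i$ are $(m+1)$-subharmonic. This point needs an actual argument (or one should work with the envelope over $(m+1)$-subharmonic competitors); it is, to be fair, glossed over in the paper as well. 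The membership $\PP(u_i,v_i)\in\mathcal E_{0,m}$ via $u_i+v_i\leq\PP(u_i,v_i)\leq 0$ is fine, and the final bookkeeping with the definition of $\dd$ and the linearity of $\EE$ is correct.
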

\begin{proof}
Assume that $\varphi_t$ and $\psi_t$ are weak geodesics connecting $u_0$ with $u_1$, and $v_0$ with $v_1$ respectively. Then the estimate
\[
\dd(\varphi_t,\psi_t)\leq (1-t)\dd(u_0,v_0)+t\dd(u_1,v_1).
\]
is equivalent to
\begin{equation}\label{7}
(1-t)\EE(\PP(u_0,v_0))+t\EE(\PP(u_1,v_1))\leq \EE(\PP(\varphi_t,\psi_t)).
\end{equation}
Now let $\alpha_t$ be a weak geodesic connecting $\PP(u_0,v_0)$ and $\PP(u_1,v_1)$. Then the inequality (\ref{7}) can be written in the form
\begin{equation}\label{8}
\EE(\alpha_t)\leq \EE(\PP(\varphi_t,\psi_t)).
\end{equation}
Note that
\[
\sup\{v: v\in SG(\PP(u_0,v_0),\PP(u_1,v_1))\}\leq \sup\{v: v\in SG(u_0,u_1)\}
\]
and
\[
\sup\{v: v\in SG(\PP(u_0,v_0),\PP(u_1,v_1))\}\leq \sup\{v: v\in SG(v_0,v_1)\}
\]
 and therefore $\alpha_t\leq \varphi_t$ and $\alpha_t\leq \psi_t$, so finally $\alpha_t\leq \PP(\varphi_t,\psi_t)$. Therefore inequality (\ref{8}) follows from Proposition~\ref{basic}, and this proof is finished.
\end{proof}

\end{document}